\title{A new complete Calabi-Yau metric on $\C^3$ }
\author{Yang Li}
\date{\today}
\newtheorem{thm}{Theorem}[section]
\newtheorem{lem}[thm]{Lemma}
\theoremstyle{definition}
\newtheorem{eg}[thm]{Example}
\newtheorem{cor}[thm]{Corollary}
\newtheorem*{rmk}{Remark}
\newtheorem{prop}[thm]{Proposition}
\newtheorem{Def}[thm]{Definition}
\newtheorem{Question}[thm]{Question}
\newtheorem*{Acknowledgement}{Acknowledgement}
\newcommand{\ie}{\emph{i.e.} }
\newcommand{\cf}{\emph{cf.} }
\newcommand{\R}{\mathbb{R}}
\newcommand{\C}{\mathbb{C}}
\newcommand{\Z}{\mathbb{Z}}
\newcommand{\norm}[1]{\left\lVert#1\right\rVert}
\newcommand{\Lap}{\Delta}
\newcommand{\ddbar}{\partial\bar{\partial}}
\DeclareMathOperator{\Tr}{Tr}
\begin{document}
	\maketitle
	
\begin{abstract}
Motivated by the study of collapsing Calabi-Yau 3-folds with a Lefschetz K3 fibration, we construct a complete Calabi-Yau metric on $\C^3$ with maximal volume growth, which in the appropriate scale is expected to model the collapsing metric near the nodal point. This new Calabi-Yau metric has singular tangent cone at infinity $\C^2/\Z_2 \times \C$, and its Riemannian geometry has certain non-standard features near the singularity of the tangent cone, which are more typical of adiabatic limit problems. The proof uses an existence result in H-J. Hein’s PhD thesis to perturb an asymptotic approximate solution into an actual solution, and the main difficulty lies in correcting the slowly decaying error terms. 	
\end{abstract}

\section{Motivations}

This work grows out of the attempt to model the collapsing behaviour of Calabi-Yau metrics on a K3 fibred Calabi-Yau manifold over a Riemann surface, where the K\"ahler class has very small volume on the K3 fibres. Assuming the only singularities in the fibration are nodal, we wish to understand the metric near the critical points of the fibration, in the standard local picture of the ODP on $\C^3$. Knowledge of the model metric is often useful in gluing constructions. The prototype examples are the Kummer construction \cite{Donaldson} and the Gross-Wilson construction \cite{GrossWilson} of CY metrics on K3 surfaces, modelled on the Eguchi-Hanson metric and the Ooguri-Vafa metric, respectively.

The main result of this paper is:

\begin{thm}
There is a complete Calabi-Yau metric of maximal growth on $\C^3$ with the standard holomorphic volume form, whose tangent cone at infinity is the singular cone $\C^2/\Z_2 \times \C$.
\end{thm}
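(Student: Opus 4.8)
The plan is to view $\C^3$ as the total space of the standard Lefschetz fibration $\pi\colon \C^3 \to \C$, $\pi(z) = \tfrac12(z_1^2 + z_2^2 + z_3^2) =: w$, whose smooth fibres $X_w = \pi^{-1}(w)$, $w \neq 0$, are affine quadric surfaces diffeomorphic to $T^*S^2$ carrying the Eguchi--Hanson ALE metric, while the central fibre $X_0$ is the $A_1$ cone $\C^2/\Z_2$ with its node at the origin of $\C^3$. Writing the standard holomorphic volume form as $\Omega = dw \wedge \Omega_w$, with $\Omega_w$ the fibrewise holomorphic symplectic form (the Poincar\'e residue of $\Omega/(\pi-w)$), the first step is to build an approximate solution near infinity as a \emph{semi-flat} K\"ahler form $\omega_{\mathrm{sf}} = \omega_{\mathrm{fib}} + \pi^*\omega_{\mathrm{base}}$: on each fibre $\omega_{\mathrm{fib}}|_{X_w}$ is the Eguchi--Hanson metric rescaled by the power of $|w|$ dictated by $X_w = |w|^{1/2}\,X_{w/|w|}$, and $\omega_{\mathrm{base}} = i\ddbar\psi(w)$ is chosen --- in the spirit of the Ooguri--Vafa and Gross--Wilson constructions --- so that the fibrewise Calabi--Yau identity $\omega_{\mathrm{fib}}^2|_{X_w} = c\,\Omega_w\wedge\bar\Omega_w$ together with a suitable $\psi$ forces $\omega_{\mathrm{sf}}^3 = c\,\Omega\wedge\bar\Omega$ up to a small error. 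It is the ALE asymptotics of Eguchi--Hanson (asymptotic to $\C^2/\Z_2$, with metric error $O(\rho^{-4})$) together with the conical behaviour thereby forced on $\omega_{\mathrm{base}}$ that will produce the tangent cone $\C^2/\Z_2\times\C$.

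Next I pass to a global smooth metric on all of $\C^3$. The semi-flat form degenerates as $w\to 0$ (the fibrewise Eguchi--Hanson collapses to the singular cone $X_0$), so near the origin I glue in a separate model metric that is smooth across $X_0$, matching $\omega_{\mathrm{sf}}$ on an annular overlap, to obtain a complete K\"ahler metric $\omega_0$ on $\C^3$ of maximal volume growth whose Monge--Amp\`ere error $f_0 = \log\bigl(\omega_0^3/(c\,\Omega\wedge\bar\Omega)\bigr)$ is supported in controlled regions and decays at infinity. One then sets up weighted H\"older spaces adapted to the maximal-volume-growth, singular-cone geometry --- weights measuring radial decay, with extra care transverse to the singular line of the cone --- and appeals to the existence theorem in Hein's thesis: provided $f_0$ lies in the appropriate weighted space with decay faster than the relevant threshold, and the uniform geometry and Sobolev hypotheses hold, there is $\varphi$, decaying at infinity, with $(\omega_0 + i\ddbar\varphi)^3 = c\,\Omega\wedge\bar\Omega$, which is the sought complete Calabi--Yau metric.

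The crux, and what I expect to be the main obstacle, is that the naive semi-flat ansatz does \emph{not} decay fast enough. Because the Eguchi--Hanson fibres collapse relative to the base as one approaches the singular line of the cone, the leading error terms are only polynomially small, at a rate below Hein's threshold --- this collapsing is precisely the ``adiabatic limit'' phenomenon flagged in the abstract, and it also means $\omega_0$ fails to be asymptotically conical in the usual strong sense. So before invoking Hein's theorem I must improve $\omega_0$ order by order: solve a hierarchy of linear equations --- fibrewise corrections coupled to a Poisson-type equation on the base $\C_w$ whose source is the fibre-average of the current error --- to cancel the slowly decaying terms, while matching the resulting expansions across the three regions (interior model near $0$, semi-flat region, cone-like infinity). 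Establishing the mapping properties of the linearised operator on these weighted spaces, especially transverse to the singular line where the geometry collapses, and checking that the residual error can be pushed past Hein's threshold, is the technical heart of the proof.

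Finally, since $\omega - \omega_0 = i\ddbar\varphi$ with $\varphi\to 0$ at infinity, the asymptotics of $\omega$ coincide to leading order with those of the model; a monotonicity argument then identifies $\C^2/\Z_2\times\C$ as the unique tangent cone at infinity, and maximal volume growth is immediate since this cone has real dimension six.
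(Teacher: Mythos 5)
Your overall strategy coincides with the paper's: build a semi-flat ansatz out of the fibrewise Ricci-flat (Stenzel, i.e.\ Eguchi--Hanson) metrics on the quadric fibres of $f=\sum z_i^2$, identify the slowly decaying Monge--Amp\`ere error concentrated near the collapsing vanishing cycles as the main obstacle, remove it by a linear correction before invoking Hein's existence theorem, and read off maximal volume growth and the tangent cone $\C^2/\Z_2\times\C$ from the decay of the resulting potential. Two implementation points differ. Instead of gluing a separate interior model across the central fibre, the paper smooths the semi-flat potential by a single explicit global formula, $\phi=\frac12\eta+\sqrt{H+(\eta+\sqrt{H+1})^{1/2}}$ with $H=\sum|z_i|^2$, $\eta=|f|^2$; keeping everything cohomogeneity two in $(H,\eta)$ is what makes the error terms explicitly computable, and no base potential $\psi(w)$ needs to be chosen since the flat base form already produces a constant multiple of the Euclidean volume form to leading order.

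The one step of your outline that would not go through as stated is the correction of the slow-decay error. You invoke the compact-fibre adiabatic template: split the error into its fibre average plus the rest, solve fibrewise for the latter and a Poisson-type equation on the base for the former. Here the fibres are noncompact ALE surfaces, so the fibre average is not defined, and the error is a radially decaying profile along each fibre rather than something transverse to constants. What actually happens (Lemma \ref{correctmainerrors} and the discussion around (\ref{Laplacianformula})) is that in the bad region $E_\epsilon=\{|f|\geq\epsilon H\}$ the $\eta$-derivatives in the Laplacian are suppressed and the operator acting on $b(H,\eta)$ degenerates to the ODE $\frac{d}{dx}\bigl\{2(x-a)\sqrt{x+a}\,h'(x)\bigr\}=(\text{error})$ in the fibre variable $x=H$, with the radius function $a\sim|f|+H^{1/4}$ entering only as a parameter; each of the finitely many sub-quadratically decaying error terms is killed by explicitly integrating this ODE, giving a correction $b=\frac1a\tilde b(H/a)$ with $\tilde b=O(\log x)$ and residual error $O(\log(2H/a)/a^3)$, past Hein's threshold $\mu>2$. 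The reason one can find a decaying potential against a forcing term decaying slower than quadratically --- contrary to Euclidean potential theory, and the point your weighted-space setup would have to confront --- is that the slow decay is confined to $E_\epsilon$, which by Lemma \ref{squash} occupies a vanishing solid angle. Relatedly, the tangent cone is identified by a direct Gromov--Hausdorff comparison using that squashing lemma rather than by monotonicity, which matters because the limit cone is singular and $E_\epsilon$, despite its large Lebesgue measure, collapses onto the singular line.
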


\begin{rmk}
The leading order asymptote and the error estimate will be given in the course of proof (\cf (\ref{asymptoticmetricleadingorder}) and proposition \ref{decayerrorestimate}).
\end{rmk}

We briefly summarize the method.
It is natural to write down the so called semiflat metric as a first approximation, which restricts to the Stenzel metrics on the fibres. This is a singular metric admitting a large symmetry group, suggesting that the construction of Calabi-Yau metric is a cohomogeneity two problem. After appropriate scaling and some modifications to the ansatz, we construct a metric whose volume form is asymptotic to a multiple of the standard volume form on $\C^3$. We carefully examine the error terms involved in the approximate metric, and then apply standard techniques for the Monge-Amp\`ere equation on noncompact manifolds to perturb this into a genuine Calabi-Yau metric.

This Calabi-Yau metric is interesting from several other viewpoints. One context is Gromov-Hausdorff convergence theory in Riemannian geometry. Our metric is a nontrivial Ricci-flat metric on $\C^3$ with maximal volume growth, whose tangent cone at infinity is the singular variety $\C^2/\Z_2 \times \C$; this falsifies a conjecture in the field. The neighbourhood of vanishing cycles in the Lefschetz fibration are asymptotically collapsed to the nodal line, similar to the picture in the unpublished work of Hein and Naber on $A_k$-type singularities. From the viewpoints of Euclidean geometry this is counterintuitive as the collapsed region apparently carries a large amount of Lebesgue measure. The behaviour of the Laplacian is likewise non-standard in this region; in fact the cohomogeneity two problem of inverting the Laplacian effectively reduces to solving ODEs.

Another context in which this metric is expected to arise is related to Joyce's construction of G2 manifolds. This typically involves a G2 orbifold with singularities along an associative submanifold, whose normal direction is modelled on $\C^2/\Z_2$. Joyce and Karigiannis in their unpublished work consider desingularising the orbifold, using a family of Eguchi-Hanson metrics to replace $\C^2/\Z_2$. A more subtle question then is  to understand what happens if some Eguchi-Hanson fibres are allowed to be singular. In the dimensionally reduced case where the associative submanifold is just the product of $S^1$ with a complex curve, it seems plausible that our metric with suitable scaling conventions will model this situation.

It should be pointed out that this work overlaps to some extent with the substantial literature on Calabi-Yau metrics on non-compact manifolds, due to many authors, notably Yau, Tian, Hein, Joyce, Kovalev, Haskins and Nordstr\"om, among others. One closely related work is Joyce's construction of Quasi-ALE Calabi-Yau metrics \cite{Joyce}, since our asymptotic cone is a finite group quotient of $\C^3$. Joyce's construction is more general and combinatorial in spirit, but the decay rate to the asymptotic flat metric is much faster in his case, and we do not use his result in a direct way. The announced work of Hein and Naber about $A_k$-type singularities contains many common geometric features, and may use similar strategies. In our construction of Calabi-Yau metric, the work we use substantially is Hein's PhD thesis \cite{Hein2}, which gives a criterion for perturbing an approximate Calabi-Yau metric into a genuine one.

It seems likely to the author that there are higher dimensional analogues of our construction for the standard Lefschetz fibration of $\C^n$ over $\C$. The difficulty comes from the computational complexity of precisely understanding the main error terms in the ansatz. Some informal discussions on the moduli question of classifying complete Calabi-Yau metrics on $\C^n$ with maximal growth will be given at the end.

\begin{Acknowledgement}
The author is grateful to his PhD supervisor Simon Donaldson and co-supervisor Mark Haskins for their inspirations, suggestions and encouragements, and to the London School of Geometry and Number Theory (Imperial College London, UCL and KCL) for providing a stimulating research environment. He would also like to thank H-J. Hein, V. Tosatti and G. Szekelyhidi for comments.

This work was supported by the Engineering and Physical Sciences Research Council [EP/L015234/1], the EPSRC Centre for Doctoral Training in Geometry and Number Theory (The London School of Geometry and Number Theory), University College London. The author is also funded by Imperial College London for his PhD studies.
\end{Acknowledgement}



\section{The Stenzel metric}


We review the Stenzel construction of Ricci-flat metrics on the smoothings of ODP singularities, to set up some notations and to derive some useful formulae. Let $n\geq 3$. We start with the standard quadratic function 
\[
f=\sum_1^n z_i^2: \C^n\to \C,
\]
and consider the fibres $X_y=\{ \sum {z_i^2}=y    \}.$ For later reference, let us establish a few conventions. The standard Euclidean metric on $\C^n$ is $\omega_E=\frac{1}{2}\sqrt{-1}\sum dz_i d\bar{z_i}$; we will sometimes also denote by $\omega_E$ its restriction to fibres. The volume form is $vol_E=\frac{1}{n!}\omega_E^n=\frac{1}{2^n}\widetilde{vol_E}$, where the normalised volume form is $ \widetilde{vol_E}=\prod_i (\sqrt{-1} dz_i d\bar{z_i}  )$. The pullback of the standard metric from the base $\C$ is $\omega_0=\frac{1}{2}\sqrt{-1}df d\bar{f}$. The norm-squared function on $\C^n$ is $H=\sum |z_i|^2$. The holomorphic $n$-form on $\C^n$ is $\Omega=dz_1\ldots dz_n$.

We consider the ansatz $\phi=F(H)$ for the K\"ahler potential on $X_y$; this amounts to imposing $SO(n, \R)$ symmetry. We compute the metric
\[
\bar{\partial}\phi=F'(H)\bar{\partial} {H},
\]
\[
\sqrt{-1}\partial\bar{\partial} \phi=F'\sqrt{-1} \partial\bar{\partial}H+F''\sqrt{-1} \partial{H}\wedge \bar{\partial} H.
\]
Using the binomial theorem, we have
\[
(\sqrt{-1}\ddbar{\phi})^{n-1}=F'^{n-1} (\sqrt{-1}\ddbar{H})^{n-1}
+(n-1)F'^{n-2}F'' (\sqrt{-1}\ddbar{H})^{n-2}\sqrt{-1} \partial H\bar{\partial}H.
\]
To understand the terms, we compute
\[
\frac{1}{4} (\sqrt{-1}\ddbar{H})^{n-1}\sqrt{-1}df d\bar{f}=(n-1)! H\widetilde{vol_E}.
\]
\[
\frac{1}{4} (\sqrt{-1}\ddbar{H})^{n-2}\sqrt{-1} \partial H\bar{\partial}H\sqrt{-1}df d\bar{f}=(n-2)! (H^2-|y|^2) \widetilde{vol_E}.
\]
Thus
\begin{equation}\label{fibrevolumeform}
\frac{1}{4} (\sqrt{-1}\ddbar{\phi})^{n-1}\sqrt{-1}df d\bar{f} =(n-1)! \widetilde{vol_E} \{   HF'^{n-1} +(H^2-|y|^2) F'^{n-2}F''     \}.
\end{equation}

By the adjunction formula, we can write down the holomorphic volume form $\Omega_y$ on $X_y$ by $\Omega=df\wedge \Omega_y$. This gives
\begin{equation}\label{fibreCalabiYauvolumeform}
\widetilde{vol_E}=i^{n^2} \Omega\wedge \overline{\Omega}=\sqrt{-1}df d\bar{f}\wedge (i^{(n-1)^2} \Omega_y\wedge \overline{\Omega_y}).
\end{equation}

\subsection{The homogeneous Monge-Amp\`ere equation}

The homogeneous  Monge-Amp\`ere equation on $X_y$ is
\begin{equation}
(\sqrt{-1}\ddbar{\phi})^{n-1}=0.
\end{equation}
By (\ref{fibrevolumeform}), this can be written as
\[
 HF'^{n-1} +(H^2-|y|^2) F'^{n-2}F''=0
\]
We can integrate this to obtain
\[
F'=\frac{\text{Const} }{ (  H^2-|y|^2   ) ^{1/2}    }.
\]
By a change of variable 
$
H=|f|\cosh \tau,
$
We obtain the solution up to an affine transformation
\[
\phi=F(H)= \tau.
\]

\subsection{The inhomogeneous Monge-Amp\`ere equation} 

The equation defining the Calabi-Yau metric on $X_y$ is
\begin{equation}
(\sqrt{-1}\ddbar{\phi})^{n-1}=i^{n^2} \Omega_y\wedge \overline{\Omega_y}.
\end{equation}
We can wedge both sides by $\sqrt{-1}df d\bar{f}$, and use (\ref{fibrevolumeform}) with (\ref{fibreCalabiYauvolumeform}) to see the following identity holds in $\Omega^{n,n}_{\C^n}|_{X_y}$:
\[
4(n-1)! \{   HF'^{n-1} +(H^2-|y|^2) F'^{n-2}F''     \}=1.
\]
We can simplify the equation further by a scaling observation: let $F=|y|^{\frac{n-2}{n-1} }\tilde{F}(\frac{H}{|y|})$, then $\tilde{F}$ satisfies the ODE
\[
x (\frac{d\tilde{F}}{dx} )^{n-1}+(x^2-1) (\frac{d \tilde{F} }{dx})^{n-2} \frac{d^2 \tilde{F} }{dx^2}=c_1=\frac{1}{4(n-1)!}.
\]
Now make the change of variable $x=\frac{H}{|y|}=\cosh \tau$. Then the equation becomes
\[
\frac{d}{d\tau} [  ( \frac{d \tilde{F} }{d\tau}    )   ^{n-1}          ]=c_1(n-1) \sinh^{n-2}\tau.
\]
If we impose the initial condition $\frac{d \tilde{F} }{d\tau}(0)=0$, then the solution defines up to a numerical factor the Stenzel metric.

\begin{eg}
For $n=3$, the above equation implies $\frac{d \tilde{F}} {d\tau}=\frac{1}{\sqrt{2} } \sinh (\frac{\tau}{2 })$,
$\tilde{F}=\sqrt{2} \cosh(\frac{\tau}{2 }    )=\sqrt{ \frac{H}{|y|} +1  }$. Thus the K\"ahler potential is $\phi=F(H)=\sqrt{H+|y|}$.
\end{eg}

\section{A general symmetric ansatz}

We consider K\"ahler potentials on $\C^n$ of the form 
$
\phi=F(H, \eta)
$ where $\eta=|f|^2$. This amounts to imposing the symmetry under $SO(n,\R)\times U(1)$. Notice that this is not a cohomgeneity-one problem, so it may be hard to write down explicit solutions to interesting equations. We shall nevertheless exhibit solutions whose asymptotic behaviour resembles a Calabi-Yau metric. This symmetry reduction technique is very common, and in particular used by Hein and Naber in their work on $A_k$-type singularities.

We imitate the computations about the Stenzel metric, but now we need to keep track of all partial derivatives.
\[
\bar{\partial}\phi=F_H \bar{\partial}{H}+F_\eta f d\bar{f},
\]
\[
\begin{split}
\sqrt{-1}\partial\bar{\partial} \phi=& F_H\sqrt{-1} \partial\bar{\partial}H+F_{HH}\sqrt{-1} \partial{H}\wedge \bar{\partial} H
+\sqrt{-1} F_{H\eta} (\bar{f} df\wedge \bar{\partial}H +f \partial{H}  \wedge d\bar{f}         )\\
&+
(\eta F_{\eta\eta}  +F_\eta )\sqrt{-1}df d\bar{f}   .
\end{split}
\]
Now using the binomial theorem, one can expand
\[
\begin{split}
(\sqrt{-1}\partial\bar{\partial} \phi )^n=& (F_H \sqrt{-1} \partial\bar{\partial}H + F_{HH}\sqrt{-1} \partial{H}\wedge \bar{\partial} H  )^n \\
&+
n ( F_H \sqrt{-1} \partial\bar{\partial}H + F_{HH}\sqrt{-1} \partial{H}\wedge \bar{\partial} H               )^{n-1}  \\
&\{  ( \eta F_{\eta\eta}  +F_\eta     ) \sqrt{-1}df d\bar{f} +  \sqrt{-1} F_{H\eta} (\bar{f} df\wedge \bar{\partial}H +f \partial{H}  \wedge d\bar{f}         )          \}\\
&+ {n\choose 2} (    F_H \sqrt{-1} \partial\bar{\partial}H + F_{HH}\sqrt{-1} \partial{H}\wedge \bar{\partial} H             )^{n-2}  (  2\eta F_{H\eta}^2 df d\bar{f} \wedge \partial H \bar{\partial}H                )         .
\end{split}
\]
Now we compute individual terms
\[
\begin{split}
(F_H \sqrt{-1} \partial\bar{\partial}H + F_{HH}\sqrt{-1} \partial{H}\wedge \bar{\partial} H  )^n=& F_H^n n!\widetilde{vol_E} +n F_H^{n-1} F_{HH} H (n-1)! \widetilde{vol_E} \\
=& n! \widetilde{vol_E} \{  F_H^n+   F_H^{n-1} F_{HH}  H     \}.
\end{split}
\]
\[
\begin{split}
( F_H \sqrt{-1} \partial\bar{\partial}H + F_{HH}\sqrt{-1} \partial{H}\wedge \bar{\partial} H               )^{n-1} \sqrt{-1}df d\bar{f}= 4(n-1)! \widetilde{vol_E} \\ \{ HF_H^{n-1}+(H^2-\eta) F_H^{n-2} F_{HH}              \}.
\end{split}
\]

\[
\begin{split}
&( F_H \sqrt{-1} \partial\bar{\partial}H + F_{HH}\sqrt{-1} \partial{H}\wedge \bar{\partial} H               )^{n-1} \sqrt{-1}(   \bar{f} df\wedge \bar{\partial}H +f \partial{H}  \wedge d\bar{f}                  )\\&= 
F_H^{n-1 }(\sqrt{-1} \partial\bar{\partial}H          )^{n-1} \sqrt{-1}(   \bar{f} df\wedge \bar{\partial}H +f \partial{H}  \wedge d\bar{f}                  )
=
4\eta F_H^{n-1 } (n-1)! \widetilde{vol_E} .
\end{split}
\]

\[
\begin{split}
&( F_H \sqrt{-1} \partial\bar{\partial}H + F_{HH}\sqrt{-1} \partial{H}\wedge \bar{\partial} H               )^{n-2} df d\bar{f} \wedge \partial H \bar{\partial}H  
\\&= 
F_H^{n-2 }(\sqrt{-1} \partial\bar{\partial}H          )^{n-2} df d\bar{f} \wedge \partial H \bar{\partial}H  
=
-4(n-2)! F_H^{n-2 } (H^2-\eta ) \widetilde{vol_E} .
\end{split}
\]

Combining these computations, we arrive at a formula for the volume form:
\begin{prop}
For a symmetric K\"ahler potential $\phi=F(H,\eta)$, we have a formula
\begin{equation}\label{symmetricansatz}
\begin{split}
(\sqrt{-1}\partial\bar{\partial} \phi )^n=& n! \widetilde{vol_E} \{
\{
 F_H^n+   F_H^{n-1} F_{HH}  H  
\}\\
&+
4(  \eta F_{\eta\eta}  +F_\eta     )
\{
HF_H^{n-1}+(H^2-\eta) F_H^{n-2} F_{HH}    
\}\\
&+4F_{H\eta}\eta F_H^{n-1}  
-4F_{H\eta}^2 \eta F_H^{n-2} (H^2-\eta)
\}.
\end{split}
\end{equation}
\end{prop}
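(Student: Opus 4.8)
The plan is to carry out the binomial expansion of $(\sqrt{-1}\ddbar\phi)^n$ in a systematic way, grouping terms by how many factors of the ``base direction'' $\sqrt{-1}df\,d\bar f$ and the ``mixed'' forms $\sqrt{-1}(\bar f\,df\wedge\bar\partial H + f\,\partial H\wedge d\bar f)$ they contain. Write $\sqrt{-1}\ddbar\phi = A + B$, where $A = F_H\,\sqrt{-1}\partial\bar\partial H + F_{HH}\,\sqrt{-1}\partial H\wedge\bar\partial H$ is the ``fibrewise'' part (exactly the shape appearing in the Stenzel computation) and $B = (\eta F_{\eta\eta}+F_\eta)\sqrt{-1}df\,d\bar f + \sqrt{-1}F_{H\eta}(\bar f\,df\wedge\bar\partial H + f\,\partial H\wedge d\bar f)$ collects everything involving the base. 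The crucial observation is that $B$ is a sum of $(1,1)$-forms each of which is ``annihilated by itself'' at third power: any wedge of three factors drawn from $\{df, d\bar f, \bar\partial H, \partial H\}$ beyond the available $dz_i, d\bar z_i$ slots will force a repeated differential once combined with enough powers of $A$, so only $B^0$, $B^1$, and the single surviving $B^2$ term $2\eta F_{H\eta}^2\,df\,d\bar f\wedge\partial H\,\bar\partial H$ contribute. This is exactly the three-line expansion displayed just before the proposition, so that step is already in place.

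Next I would evaluate the four elementary wedge integrals that appear as coefficients. The first, $(F_H\,\sqrt{-1}\partial\bar\partial H + F_{HH}\,\sqrt{-1}\partial H\wedge\bar\partial H)^n$, is computed by the binomial theorem together with the two Stenzel-type identities $(\sqrt{-1}\ddbar H)^n = n!\,\widetilde{vol_E}$ and $(\sqrt{-1}\ddbar H)^{n-1}\wedge\sqrt{-1}\partial H\wedge\bar\partial H = H\cdot n!\,\widetilde{vol_E}$ (the latter being the $y$-independent, unrestricted analogue of the formula $\frac14(\sqrt{-1}\ddbar H)^{n-1}\sqrt{-1}df\,d\bar f = (n-1)!H\,\widetilde{vol_E}$ already derived); higher powers of $\partial H\wedge\bar\partial H$ vanish. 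The remaining three integrals each pair an $(n-1)$- or $(n-2)$-th power of $A$ against one or two base-direction forms; here one uses the identities from the Stenzel section, namely $\frac14(\sqrt{-1}\ddbar H)^{n-1}\sqrt{-1}df\,d\bar f = (n-1)!H\,\widetilde{vol_E}$ and $\frac14(\sqrt{-1}\ddbar H)^{n-2}\sqrt{-1}\partial H\bar\partial H\,\sqrt{-1}df\,d\bar f = (n-2)!(H^2-\eta)\,\widetilde{vol_E}$ (with $|y|^2$ replaced by $\eta=|f|^2$ since we are now on all of $\C^n$, not a fixed fibre), plus the companion identity $\frac14(\sqrt{-1}\ddbar H)^{n-1}\sqrt{-1}(\bar f\,df\wedge\bar\partial H + f\,\partial H\wedge d\bar f) = (n-1)!\,\eta\,\widetilde{vol_E}$, which follows from $\partial H = \sum\bar z_i\,dz_i$, $df = 2\sum z_i\,dz_i$ and a direct index contraction. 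Only the leading ($F_H^{n-1}$-)term of each power of $A$ survives when wedged against two or more base forms, which is why the mixed and $B^2$ contributions come out proportional to $F_H^{n-1}$ and $F_H^{n-2}$ respectively.

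Finally I would assemble the pieces: the $B^0$ term gives $n!\,\widetilde{vol_E}\{F_H^n + F_H^{n-1}F_{HH}H\}$; the $n\,A^{n-1}B$ term splits into the $(\eta F_{\eta\eta}+F_\eta)$-part, which by the first two mixed identities yields $4n!\,\widetilde{vol_E}(\eta F_{\eta\eta}+F_\eta)\{HF_H^{n-1} + (H^2-\eta)F_H^{n-2}F_{HH}\}$ after the $\frac14$'s and factorials reconcile, and the $F_{H\eta}$-part, which yields $4n!\,\widetilde{vol_E}\,F_{H\eta}\eta F_H^{n-1}$; and the $\binom n2\,A^{n-2}\cdot 2\eta F_{H\eta}^2(\cdots)$ term yields $-4n!\,\widetilde{vol_E}\,F_{H\eta}^2\eta F_H^{n-2}(H^2-\eta)$, the sign coming from $df\,d\bar f\wedge\partial H\,\bar\partial H = -\,\partial H\,\bar\partial H\wedge df\,d\bar f$ and the factor-of-$\binom n2\cdot 2 = n(n-1)$ absorbing into $n!$ against the $(n-2)!$. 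Collecting gives precisely \eqref{symmetricansatz}.

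I do not expect a genuine obstacle here: the proof is a bookkeeping exercise. The one place demanding care is the combinatorics of the binomial expansion of $(A+B)^n$ — making sure that no $B^2$ term other than the $df\,d\bar f\wedge\partial H\,\bar\partial H$ one survives, and that no $B^3$ or higher term does, which is a pure rank/dimension count on the span of $\{df,d\bar f,\partial H,\bar\partial H\}$ inside $\Lambda^{1,0}\oplus\Lambda^{0,1}$ — together with tracking the numerical factors ($\frac14$'s, the $n!/(n-1)!/(n-2)!$ ratios, and the sign) so that everything collapses to a common $n!\,\widetilde{vol_E}$ prefactor.
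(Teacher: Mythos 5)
Your proposal is correct and follows essentially the same route as the paper: the paper's derivation is exactly the decomposition of $\sqrt{-1}\ddbar\phi$ into the fibrewise part $A$ and the base part $B$, the observation that only $B^0$, $B^1$ and the single $B^2$ term $2\eta F_{H\eta}^2\,df\,d\bar f\wedge\partial H\,\bar\partial H$ survive, and the evaluation of the same four elementary wedge products. The only blemish is a factor-of-$n$ slip in your stated identity $(\sqrt{-1}\ddbar H)^{n-1}\wedge\sqrt{-1}\partial H\wedge\bar\partial H = H\cdot n!\,\widetilde{vol_E}$, which should read $(n-1)!\,H\,\widetilde{vol_E}$ (the extra $n$ comes from the binomial coefficient, not the identity); your assembled final answer is nonetheless the correct one.
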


\begin{eg}
If the potential does not depend on $\eta$, then 
\[
(\sqrt{-1}\partial\bar{\partial} \phi )^n=n! \widetilde{vol_E} \{
F_H^n+   F_H^{n-1} F_{HH}  H  
\}.
\]
For example, for $F=\frac{1}{2}H$, one recovers the Euclidean metric and its volume form. For $F=\log H$, one recovers the homogeneous Monge-Amp\`ere equation.
\end{eg}

\begin{eg}
Let $n=3$. Let us specialise the ansatz to the form \begin{equation}
\phi=F(H,\eta)=\frac{1}{2}\eta+ \sqrt{H+a(\eta) }
\end{equation}
defined by a non-negative smooth function $a(\eta)$, with the property that $\sqrt{-1}\ddbar{\phi}$ is positive, and $a(\eta)$ is asymptotic to $\eta^{1/2}$ for large $\eta$. An example of this is $a(\eta)=\sqrt{\eta+1}$. 

We compute the various quantities involved in the general formula (\ref{symmetricansatz}).
\[
F_H=\frac{1}{2}\frac{1}{\sqrt{ H+a  } }, \quad F_{HH}=\frac{-1}{4}\frac{1}{(\sqrt{ H+a  })^3 },
\]
\[
HF_H^2+(H^2-a^2)F_H F_{HH}=\frac{1}{8},\quad (a^2-\eta)F_H F_{HH}=-\frac{1}{8} \frac{1}{(H+a)^2}(a^2-\eta),
\]
\[
F_H^3+F_H^2 HF_{HH}=\frac{H+2a}{16} \frac{1}{ (\sqrt{ H+a  })^5      },
\]
\[
F_\eta=\frac{1}{2}+\frac{1}{2 \sqrt{H+a}  } a', \quad F_{\eta\eta}= \frac{a''}{ 2(\sqrt{ H+a  })      }- \frac{a'^2}{ 4(\sqrt{ H+a  })^3      },
\]
\[
F_\eta+\eta F_{\eta\eta}=\frac{a'+\eta a''}{ 2(\sqrt{ H+a  })      }- \frac{\eta a'^2}{ 4(\sqrt{ H+a  })^3      },
\quad
F_{H\eta}=-\frac{1}{4} \frac{a'}{ (\sqrt{ H+a  }) ^{3}     }.
\]
Combining the computations, we get the volume form of the ansatz
\begin{equation}
(\sqrt{-1}\ddbar{\phi})^3= \frac{3}{2}\widetilde{vol_E} \{1+ (a'+\eta a'')\frac{1}{ \sqrt{ H+a  }     } +O(\frac{1}{(\sqrt{ H+a  })^3    }  ) \}.
\end{equation}
We comment that $a'+\eta a''\sim \frac{1}{4\eta^{1/2}}$ for large $\eta$. In particular, to the leading order the volume measure is a constant multiple of the Lebesgue measure. But the leading order error is quite large.

\begin{eg}(The main ansatz)
We modify the above ansatz to the form \begin{equation}\label{smoothingexample2scaled}
\phi=F(H,\eta)=\frac{1}{2}\eta+ \sqrt{H+a(\eta, H) },\quad a(\eta, H)=\sqrt{\eta+ \sqrt{H+1} }. 
\end{equation}
This is the main ansatz we shall use in the rest of the paper.
The computation follows very similar lines but is technically more complicated, and for the moment we shall just give a relatively crude asymptotic expression
\begin{equation}\label{asymptoticvolume1}
\begin{split}
(\sqrt{-1}\ddbar{\phi})^3&= \frac{3}{2}\widetilde{vol_E} \{1+ (a_\eta+\eta a_{\eta\eta})\frac{1}{ \sqrt{ H+a  }      }+(a_H+2Ha_{HH}) +O(\frac{1}{H^{3/2} }) +O(\frac{1}{a^3 }    ) \} \\
&=\frac{3}{2}\widetilde{vol_E} \{1+\frac{1}{4 H^{1/2} a    } +O(\frac{1}{H^{3/2}    }  )+O(\frac{1}{a^3 }    ) \}
.
\end{split}
\end{equation}
For small values of $\eta$ there is some improvement in the decay rate, compared to the previous example.
\end{eg}

\end{eg}

\section{Relation to the collapsing metric problem}\label{collapsingproblem}

We shall now explain the origin of the main ansatz (\ref{smoothingexample2scaled}).

\textbf{Motivating question}: Consider a Calabi-Yau 3-fold which admits a Lefschetz fibration over a Riemann surface. If we vary the K\"ahler class so that the fibres have area of order $t<<1$, we obtain some highly collapsed Calabi-Yau metric on the total space. Can we construct a local model on $\C^3$ describing what happens near the ODP points in the fibration?

Let $n=3$.
The `semiflat form' is a closed (1,1) form $\sqrt{-1}\ddbar{\phi}$ whose restriction to each fibre agrees with the Calabi-Yau metric on the fibre. This involves some ambiguity because one can always adjust $\phi$ by a potential pulled back from the base. Motivated by the above question, we can na\"ively write down the following ansatz, as a first approximation to the collapsed Calabi-Yau metric, in the local region described by the standard model of the Lefschetz fibration:
\begin{equation}
\phi=F(H,\eta)=\frac{1}{2}\eta+ t \sqrt{H+\eta^{1/2} }.
\end{equation}
Here $\frac{1}{2}\eta$ gives $\omega_0$, the pullback of the standard metric on the base, and the fibrewise restriction is just the Stenzel metric for $n=3$, rescaled to size $t<<1$.

This na\"ive ansatz is not smooth at the central fibre $\eta=0$. But at least we know $\phi$ is PSH, by the following easy and well known result:
\begin{lem}
If $\psi$, $\psi'$ are PSH, then $\log (e^\psi+e^{\psi'})$ is PSH.
\end{lem}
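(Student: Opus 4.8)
The plan is to reduce to the case of smooth potentials and then verify positivity of the complex Hessian by a direct computation. The conceptual reason the statement is true is that $g(s,t)=\log(e^s+e^t)$ is convex on $\R^2$ and non-decreasing in each variable, and a function with these two properties carries any pair of PSH functions to a PSH function; the steps below make this concrete in the case at hand. Throughout I write $u_1=e^\psi$, $u_2=e^{\psi'}$ and $S=u_1+u_2$, so that the assertion is that $\log S$ is PSH.

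First I would treat smooth $\psi,\psi'$. From $\partial u_1=u_1\,\partial\psi$ and $\partial u_2=u_2\,\partial\psi'$ one gets $\partial S=u_1\partial\psi+u_2\partial\psi'$, hence $S^2\,\ddbar\log S=S\,\ddbar S-\partial S\wedge\bar\partial S$. Expanding $\ddbar S=u_1(\ddbar\psi+\partial\psi\wedge\bar\partial\psi)+u_2(\ddbar\psi'+\partial\psi'\wedge\bar\partial\psi')$ together with $\partial S\wedge\bar\partial S$, the quadratic-in-first-derivatives terms collapse after a single cancellation and I expect to arrive at
\[
\sqrt{-1}\,\ddbar\log S=\frac{u_1\,\sqrt{-1}\ddbar\psi+u_2\,\sqrt{-1}\ddbar\psi'}{S}+\frac{u_1u_2}{S^2}\,\sqrt{-1}\,\partial(\psi-\psi')\wedge\bar\partial(\psi-\psi').
\]
Both terms on the right are non-negative $(1,1)$-forms: the first because $\psi$ and $\psi'$ are PSH, the second because $\sqrt{-1}\,\partial h\wedge\bar\partial h\geq0$ for any real function $h$. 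Hence $\log S$ is PSH when the potentials are smooth.

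For general PSH $\psi,\psi'$ I would argue locally and regularize. On any relatively compact subdomain, convolution with a standard radial mollifier produces smooth PSH functions $\psi_k\downarrow\psi$ and $\psi'_k\downarrow\psi'$. The smooth case applies to each pair, and since $g(s,t)=\log(e^s+e^t)$ is non-decreasing in each argument, $\log(e^{\psi_k}+e^{\psi'_k})$ decreases pointwise to $\log(e^\psi+e^{\psi'})$. A decreasing limit of PSH functions is PSH unless it is identically $-\infty$; that exception does not occur here, since the limit dominates $\max(\psi,\psi')$. As plurisubharmonicity is a local property, this finishes the argument.

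I do not anticipate a real obstacle: the substance is the elementary identity in the smooth case, and the only point that needs a word of care is the smooth-to-general reduction, which the mollifier argument handles in the standard way. One could instead simply invoke the general principle that a convex function which is non-decreasing in each variable, composed with PSH functions, is PSH, applied to $\log(e^s+e^t)$; the direct computation above is essentially a proof of this principle in the relevant special case.
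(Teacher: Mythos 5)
Your proof is correct. The paper states this lemma without proof, citing it as ``easy and well known,'' so there is no argument to compare against; your identity
$\sqrt{-1}\,\ddbar\log S=\frac{u_1\sqrt{-1}\ddbar\psi+u_2\sqrt{-1}\ddbar\psi'}{S}+\frac{u_1u_2}{S^2}\sqrt{-1}\,\partial(\psi-\psi')\wedge\bar\partial(\psi-\psi')$ checks out (the coefficients $Su_i-u_i^2=u_1u_2$ produce exactly the stated collapse), and the reduction to the smooth case via decreasing radial mollification, monotonicity of $\log(e^s+e^t)$ in each variable, and the lower bound by $\max(\psi,\psi')$ is the standard and complete way to handle general PSH inputs.
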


We shall use the formula (\ref{symmetricansatz}) as a test for the validity of this na\"ive ansatz as an approximate model for a Calabi-Yau metric. The semiflat condition gives 
\[
HF_H^{n-1}+(H^2-\eta) F_H^{n-2} F_{HH}=c_1t^{2}=\frac{1}{8}t^2.
\]
To leading order, we expect $ \eta F_{\eta\eta}  +F_\eta \sim \frac{1}{2}$, by ignoring all terms with $t$ dependence. We notice that all terms with $H$ dependence necessarily also carry a $t$ factor, so the correction terms $ F_H^n+   F_H^{n-1} F_{HH}  H $ and $4F_{H\eta}\eta F_H^{n-1}  
-4F_{H\eta}^2 \eta F_H^{n-2} (H^2-\eta)$ are both of order $t^{3}$. This means the leading order term is $(\sqrt{-1}\partial\bar{\partial} \phi )^3\sim \frac{3}{2} t^2 \widetilde{vol_E}$, and this metric is approximately Calabi-Yau, with error terms typically suppressed by a factor $t$.

 Now $F_H\sim \frac{t}{H^{1/2} }$, so at the scale $H \sim t^{2/3}$, we have ${F_H}^{3}\sim t^2$, which is comparable to the leading order volume form, and the approximation breaks down. 
 
 Similarly $F_\eta-\frac{1}{2}\sim \frac{t}{H^{1/2} \eta^{1/2 } }$, so if $H\eta\sim t^2$, the approximation fails. 
 
 The source of the first failure has an interesting geometric interpretation. Think about the fibred Calabi-Yau 3-fold with the collapsing CY metric which we are trying to model. At the nodal point, \ie the origin, the CY metric should be locally Euclidean, but the volume form has order $t^2$. Therefore it seems reasonable to expect that near the origin, this collapsing CY metric is uniformly equivalent to $t^{2/3}\omega_E$. This uniform equivalence is expected to hold up to the scale $H\sim t^{2/3}$, at which the Calabi-Yau metric matches up with the semiflat approximation ansatz. We may call the scale $H\sim t^{2/3}$ the `quantisation scale'. The reason for choosing this terminology is that, when $H$ is larger than this scale, the CY metric approximates a rescaled semiflat metric. This picture is formally similar to the semiclassical limit in quantum mechanics. The quantisation scale is where this approximation breaks down, which can be imagined as quantum fluctuation effects.

The second failure due to the smallness of $\eta$ seems to be an artefact of our non-smooth choice of ansatz. There is no canonical choice for a smoothing. We make the following choice to achieve good decay properties:
\begin{equation}\label{smoothingexample2}
\phi=\frac{1}{2}\eta+ t \sqrt{H+(\eta+t\sqrt{H+t^{2/3} })^{1/2} }.
\end{equation}
The previous discussions essentially apply to the modified situation, with $H\eta\sim t^2$ replaced by $H(\eta+t^{4/3})\sim t^2$,
and now the sole source of failure is due to $H\sim t^{2/3}$.

We zoom in to the quantisation scale. This amounts to using the coordinates $\frac{1}{t^{1/3}}z_i$ instead of $z_i$, and scale by a factor $t^{-4/3}$.  Starting from (\ref{smoothingexample2}), we get
\begin{equation}
\phi=\frac{1}{2}\eta+ \sqrt{H+(\eta+\sqrt{H+1} )^{1/2} }.
\end{equation}
which is our main ansatz.

\section{Riemannian geometry of the main ansatz (\ref{smoothingexample2scaled})}

We study the asymptotic properties of the metric ansatz (\ref{smoothingexample2scaled}). As a preliminary remark, the metric has asymptotically the Euclidean volume form. At one stage we will make use of a non-holomorphic parametrisation, so we shall distinguish the K\"ahler form $\omega$ and the associated Riemannian metric tensor $g$.

\subsection{Asymptotic metric}

Now we study the properties of the metric for $H>>1$, only to the leading order. The asymptotic expression is
\begin{equation}\label{asymptoticmetricleadingorder}
\begin{split}
\omega=\sqrt{-1}\ddbar{\phi}\sim \frac{1}{2}df\wedge d\bar{f}+ \frac{\sqrt{-1} }{ 2\sqrt{H+a}    }\ddbar{H}- \frac{\sqrt{-1} }{ 4 (\sqrt{H+a})^3    }\partial{H}\wedge \bar{\partial} H  \\
\sim \frac{1}{2}df\wedge d\bar{f}+ \frac{\sqrt{-1} }{ 2\sqrt{H+\eta^{1/2}}    }\ddbar{H}- \frac{\sqrt{-1} }{ 4 (\sqrt{H+\eta^{1/2}})^3    }\partial{H}\wedge \bar{\partial} H
\end{split}
\end{equation}

We want to understand the qualitative features of the distance function. 
\begin{lem}
In the region $P\in \{H>1\}$, there is a uniform equivalence
\begin{equation}
C ( |f|+H^{1/4}     )\leq d(0,P) \leq C'( |f|+H^{1/4}   ).
\end{equation}
In other words, $d(0,P)$ is uniformly equivalent to the function $a$ for $P\in \{H>1\}$.
\end{lem}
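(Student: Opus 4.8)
The plan is to estimate the distance from the origin by analyzing the asymptotic metric (\ref{asymptoticmetricleadingorder}) along suitable paths and by a lower bound argument via Lipschitz functions. First I would set up the picture: in the region $\{H>1\}$ the metric splits, up to uniform equivalence, into the base direction $\frac{1}{2}df\wedge d\bar f$, which contributes a Euclidean-type term of size $|f|$, and the fibre-type directions governed by $\frac{\sqrt{-1}}{2\sqrt{H+a}}\ddbar H$ together with the negative correction $-\frac{\sqrt{-1}}{4(\sqrt{H+a})^3}\partial H\wedge\bar\partial H$. Since on a fibre $H$ ranges over $[|f|\cdot 1,\infty)$ roughly and $a\sim (\eta+\sqrt{H})^{1/2}$, for $H$ large compared to $\eta$ one has $a\sim H^{1/4}$, so the fibre metric behaves like $\frac{1}{\sqrt{H}}$ times the Euclidean fibre metric; the radial fibre distance is then $\int^{H} \frac{dH}{\sqrt{H}\cdot(\text{length scale})}$ which integrates to something of order $H^{1/4}$. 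The claim $d(0,P)\sim |f|+H^{1/4}$ then amounts to showing these two contributions add up (no cancellation) and dominate.

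For the upper bound I would construct an explicit path from $0$ to $P=(z_1,\dots,z_n)$: first move within the central-ish fibres to adjust $f$ from $0$ to $f(P)$, keeping $H$ controlled, which costs $O(|f|)$ by the base term plus a fibre cost that is lower order; then move along the fibre $X_{f(P)}$ from a point near its "waist" ($H\sim|f|$) out to $P$, estimating the length using the asymptotic fibre metric $\sim \frac{1}{\sqrt{H+a}}\omega_E|_{\text{fibre}}$. Along this fibre ray the dominant term in arclength is $\int \frac{dH}{\sqrt{H}\cdot \sqrt{H}}$-type? — more precisely one parametrizes by the $SO(n,\R)$-radial coordinate and uses $H\sim (\text{radius})^2$, so $ds \sim \frac{d(\text{radius})}{(H+a)^{1/4}} \sim \frac{dr}{r^{1/2}}$, giving total length $\sim r^{1/2}\sim H^{1/4}$. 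Summing gives $d(0,P)\le C'(|f|+H^{1/4})$. Some care is needed near $H\sim 1$ where the ansatz is not in its asymptotic regime, but that region is compact so it only contributes an additive constant, absorbed since $H>1$ forces $H^{1/4}>1$.

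For the lower bound I would exhibit two Lipschitz functions whose gradients are controlled by the metric: the function $|f|$ (equivalently $\eta^{1/2}$), whose gradient with respect to $\omega$ is bounded because the base term $\frac{1}{2}df\wedge d\bar f$ dominates $|\partial f|$, giving $d(0,P)\ge c|f(P)|$; and a function comparable to $H^{1/4}$, for which I would compute $|\nabla H^{1/4}|_g^2 = \frac{1}{16}H^{-3/2}|\nabla H|_g^2$ and bound $|\nabla H|_g^2$ using the inverse of the fibre metric, where the coefficient $\frac{1}{2\sqrt{H+a}}$ on $\ddbar H$ means $|\nabla H|_g^2 \lesssim \sqrt{H+a}\cdot|\partial H|^2_{\omega_E} \lesssim \sqrt{H}\cdot H$, so $|\nabla H^{1/4}|_g \lesssim 1$; hence $d(0,P)\ge c H(P)^{1/4}$. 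Combining, $d(0,P)\ge c(|f|+H^{1/4})/2$. The identification with $a$ at the end is immediate: for $H>1$, $a=(\eta+\sqrt{H+1})^{1/2}$ satisfies $a\sim \max(\eta^{1/2},H^{1/4})\sim |f|+H^{1/4}$ up to constants.

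The main obstacle I expect is the lower bound on the fibre direction, i.e.\ producing a single globally defined Lipschitz function comparable to $H^{1/4}$ with $\omega$-gradient bounded uniformly including in the transition region $\eta\gtrsim H$ (where $a\sim\eta^{1/2}$ rather than $H^{1/4}$) — there the naive function $H^{1/4}$ may not be the right comparison and one must instead use $a$ itself, carefully checking $|\nabla a|_g\le C$ using the explicit derivatives of $a(\eta,H)$ and the structure of (\ref{asymptoticmetricleadingorder}); handling the cross terms $F_{H\eta}$ and verifying there is no cancellation between the $\ddbar H$ term and the negative $\partial H\wedge\bar\partial H$ term (which requires knowing $H^2-\eta\ge 0$ on fibres and that the net fibre metric is genuinely positive and of the claimed size) is the technically delicate point. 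Everything else reduces to one-variable ODE-type estimates along the $SO(n,\R)$ orbits, which the Stenzel computations in Section 2 have essentially already set up.
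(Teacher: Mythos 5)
Your proposal is correct and follows essentially the same route as the paper: the lower bound comes from showing the metric dominates $\sqrt{-1}\,df\wedge d\bar f$ and (up to the $\partial H\wedge\bar\partial H$ correction, controlled by Cauchy--Schwarz) a multiple of $H^{-1/2}\sqrt{-1}\ddbar H$, so that $|f|$ and $H^{1/4}$ are Lipschitz, while the upper bound comes from measuring an explicit path (the paper simply uses the straight radial segment rather than your two-stage path). The subtlety you flag in the region where $a\sim\eta^{1/2}$ dominates $H^{1/4}$ is genuine but harmless, since there $|f|\gtrsim H\geq H^{1/4}$ and the base lower bound already suffices; the paper elides this point.
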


\begin{proof}
Using the asymptotic formula, one sees
$
C\omega \geq \sqrt{-1}df\wedge d\bar{f}, 
$
so $|f|\leq Cd(0, P)$. Similarly one sees for $H>1$,
$C\omega \geq \frac{\sqrt{-1} }{  \sqrt{H}   }\ddbar{H}$, so along any path parametrised by the arclength, \[\frac{d|z|}{ds} \leq C|z|^{1/2},\] which integrates to give $H^{1/4} \leq C d(0,P)$.

Now for the reverse direction, we take the radial path joining the origin and the point $P$, and compute the length. This gives 
$
d(0,P) \leq C( |f|+H^{1/4}   ).
$
\end{proof}

\begin{lem}
Let $r>>1$. We have the maximal volume growth condition:
\begin{equation}
Cr^6 \leq Vol( B_g(0,r )    ) \leq C'r^6.
\end{equation}
\end{lem}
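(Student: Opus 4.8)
The plan is to reduce the volume growth estimate to an elementary Euclidean measure computation, exploiting two facts: the volume form of the ansatz is uniformly comparable to the Lebesgue measure (by (\ref{asymptoticvolume1})), and the distance to the origin has just been identified, on $\{H>1\}$, with the function $a\asymp|f|+H^{1/4}$. Throughout, $A\asymp B$ means $A/B$ is bounded above and below by positive constants.

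First I would carry out the reduction. Fix $H_0\gg1$ large enough that the asymptotic expansion (\ref{asymptoticvolume1}) holds on $\{H>H_0\}$; there $vol_g=\tfrac{1}{3!}\omega^3$ is uniformly comparable to $\widetilde{vol_E}$, hence to the Euclidean volume form $vol_E$. The set $\{H\leq H_0\}$ is the closed Euclidean ball $\{|z|\leq H_0^{1/2}\}$, hence compact; since $g$ is a smooth metric it has finite $g$-volume and finite $g$-diameter, so for $r$ large $\{H\leq H_0\}\subseteq B_g(0,r)$ and it contributes only a fixed bounded constant. On $\{H>H_0\}$ the preceding lemma gives $d(0,P)\asymp a(P)$, and $a=(\eta+(H+1)^{1/2})^{1/2}$ is uniformly comparable to $|f|+H^{1/4}$. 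Hence, up to adjusting the radius by fixed constants and a bounded additive error,
\[
\mathrm{Vol}\big(B_g(0,r)\big)\asymp Leb(\Omega_r),\qquad \Omega_R:=\{\,z\in\C^3:\ |f(z)|+H(z)^{1/4}<R\,\},
\]
and $\Omega_R$ is sandwiched between rescaled copies of $\{|f|<R\}\cap\{H<R^4\}$.

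Next I would compute $Leb(\{|f|<R\}\cap\{H<R^4\})$ in polar coordinates $z=\rho w$ on $\C^3=\R^6$, with $\rho=|z|=H^{1/2}$ and $w\in S^5$. Since $f(\rho w)=\rho^2 f(w)$ and $\max_{S^5}|f|=1$,
\[
Leb\big(\{|f|<R\}\cap\{H<R^4\}\big)\asymp\int_0^{R^2}\rho^5\,\sigma_5\big(\{\,w\in S^5:\ |f(w)|<R\rho^{-2}\,\}\big)\,d\rho,
\]
where $\sigma_5$ is the standard measure on $S^5$. For $\rho\lesssim\sqrt R$ the threshold $R\rho^{-2}$ exceeds $1$, so the slice is all of $S^5$, and this range contributes only $\asymp R^3$. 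The substantive range is $\sqrt R\lesssim\rho\lesssim R^2$, where one needs the measure of a thin sublevel set of $|f|$ on the sphere.

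The key input — and the step I expect to require the most care — is the estimate $\sigma_5(\{|f|<\epsilon\})\asymp\min(\epsilon^2,1)$. Writing $w_j=x_j+iy_j$ with $x,y\in\R^3$ and $|x|^2+|y|^2=1$, one has $f(w)=(|x|^2-|y|^2)+2i\langle x,y\rangle$, so $\{f=0\}\cap S^5=\{\,|x|=|y|=2^{-1/2},\ x\perp y\,\}$ is a compact smooth submanifold of codimension $2$ on which $d(f|_{S^5})$ is surjective; thus the $\epsilon$-sublevel of $|f|$ is a tube of transverse area $\asymp\epsilon^2$. (Geometrically this codimension-$2$ locus is precisely the collapsing vanishing-cycle region, and its collapse is what makes the volume grow like $r^6$ rather than like $r^{12}$, the latter being what the slab $\{H<R^4\}$ alone would give.) Substituting $\epsilon=R\rho^{-2}$,
\[
\int_{\sqrt R}^{R^2}\rho^5\big(R\rho^{-2}\big)^2\,d\rho=R^2\int_{\sqrt R}^{R^2}\rho\,d\rho\asymp R^6,
\]
which dominates the $R^3$ from the first range. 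Hence $Leb(\Omega_R)\asymp R^6$, and combining with the reduction step gives $\mathrm{Vol}(B_g(0,r))\asymp r^6$ for $r\gg1$, i.e. both stated inequalities. I would close by remarking that this is consistent with — indeed forced by — the tangent cone at infinity $\C^2/\Z_2\times\C$ being six-dimensional.
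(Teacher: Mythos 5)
Your proof is correct, and its first half --- sandwiching $B_g(0,r)$ between Euclidean sets of the form $\{|f|\leq Cr\}\cap\{H\leq Cr^4\}$ via the distance-function lemma, then trading the $g$-volume for Lebesgue measure using the volume-form asymptotics, with the compact piece $\{H\leq H_0\}$ contributing a bounded amount --- is exactly the paper's reduction. Where you genuinely diverge is in evaluating the Lebesgue measure of that set. The paper rescales each coordinate by $r^{-2}$, turning the set into $\{H\leq 1\}\cap\{|f|\leq r^{-3}\}$ with Jacobian $r^{12}$, and then argues that this thin neighbourhood is approximately the product of the $4$-real-dimensional piece $X_0\cap\{H\leq 1\}$ with the disc $\{|f|\leq r^{-3}\}\subset\C$, giving measure $\asymp r^{-6}$ and hence $r^{6}$ overall. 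You instead integrate over spherical shells, using the homogeneity $f(\rho w)=\rho^2 f(w)$ and the tube estimate $\sigma_5(\{|f|<\epsilon\})\asymp\min(\epsilon^2,1)$, which you justify by checking that $\{f=0\}\cap S^5$ is a smooth compact codimension-two submanifold on which $d(f|_{S^5})$ is surjective. The two computations are equivalent --- both ultimately rest on $f$ being a submersion near the cone $X_0$ away from the origin, so that the relevant set is a codimension-two tube --- but yours has the merit of making the required transversality explicit and of handling the vertex of $X_0$ cleanly (it is absorbed into the harmless range $\rho\lesssim\sqrt{R}$, contributing only $O(R^3)$), whereas the paper's ``approximately the product'' step quietly sets aside the singular point of $X_0$ and the degeneration of the nearby fibres $X_y$ there. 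The paper's version is shorter and displays the source of the exponent ($12-6=6$) at a glance; both are complete proofs of the stated two-sided bound.
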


\begin{proof}
The description of the distance function implies that
\[
\{ H^{1/4}\leq Cr  \}\cap \{|f|\leq Cr \}   \subset   B_g(0,r)\subset  \{ H^{1/4}\leq C'r  \}\cap \{|f|\leq C'r \} \]
for some appropriate constants $C$ and $C'$. Thus we can consider instead the set $\{ H\leq r^4  \}\cap \{|f|\leq r \}$ and estimate its volume. This is an exercise in Lebesgue integration theory. 

Since the asymptotic volume measure is comparable to the Lebesgue measure,
\[
Vol_{\omega}( \{ H\leq r^4  \}\cap \{|f|\leq r \}            )  \\
 \sim Vol_{Lebesgue} ( \{ H\leq r^4  \}\cap \{|f|\leq r \}            ).
\]
By the homogeneity of the Lebesgue measure, the RHS is
\[
r^{12} Vol_{Lebesgue} ( \{ H\leq 1 \}\cap \{|f|\leq r^{-3} \}            ) .
\]
Now for very large $r$, the set $\{ H\leq 1 \}\cap \{|f|\leq r^{-3} \}$ is approximately the product of $ \{ H\leq 1 \}\cap X_0 $ with the disc $\{ |f|\leq r^{-3}  \}\subset \C $, so the above
\[
 \sim r^{12} Area ( \{ H\leq 1 \}\cap X_0      ) Area(  \{ |f|\leq r^{-3}  \}\subset \C      )\sim r^6.
\]
Here the area of $\{ H\leq 1 \}\cap X_0 $ is induced from the restriction of the Euclidean metric. The claimed result follows. 
\end{proof}

We define a family of cones in $\C^3$ by $E_\epsilon=\{ |f|\geq \epsilon H         \}$, for $0<\epsilon<1$. In practice $\epsilon$ is a fixed small number. Our next result says these cones are highly squashed in the metric $g$.

\begin{lem}\label{squash}
Take $r>1$.
Let $P\in B_g( r ) \cap E_{\epsilon}$, then the distance of $P$ to the complement of $E_\epsilon$ has an upper bound
\begin{equation}
d_g(P, \C^3 \setminus E_\epsilon) \leq \frac{C}{\epsilon^{1/4}}r^{1/4}.
\end{equation}
\end{lem}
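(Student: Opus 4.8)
The plan is to estimate the distance from a point $P \in B_g(r) \cap E_\epsilon$ to the boundary locus $\{|f| = \epsilon H\}$ by exhibiting an explicit path and bounding its length using the asymptotic metric (\ref{asymptoticmetricleadingorder}). Since $P$ lies in $E_\epsilon$ we have $|f(P)| \geq \epsilon H(P)$, and by the previous lemma $d(0,P) \leq r$ forces roughly $|f(P)| \lesssim r$ and $H(P)^{1/4} \lesssim r$; in particular $H(P) \lesssim r^4$. The idea is to move $P$ toward the complement of $E_\epsilon$ by \emph{increasing $H$ while keeping $f$ fixed} — for instance, sliding along a real curve in the fibre $X_{f(P)}$ that increases the norm-squared $H$ until $H$ reaches $|f(P)|/\epsilon$, at which point one has exited $E_\epsilon$. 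Keeping $f$ fixed kills the $\frac12 df \wedge d\bar f$ term, so the cost of this path is controlled purely by the fibrewise part $\frac{\sqrt{-1}}{2\sqrt{H+a}}\ddbar H$ (the $-\frac{\sqrt{-1}}{4(\sqrt{H+a})^3}\partial H \bar\partial H$ term only helps, being negative-definite in the relevant direction, or at worst is lower order).

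The key computation is then that along such a path, parametrised so that $|z|$ increases, the metric gives $ds \lesssim \frac{d|z|}{(H+a)^{1/4}} \lesssim \frac{d|z|}{H^{1/4}}$, which on integrating from $H(P)$ up to $H \sim |f(P)|/\epsilon$ yields a length bounded by a constant times $\big(|f(P)|/\epsilon\big)^{1/4} \lesssim \epsilon^{-1/4} r^{1/4}$, using $|f(P)| \lesssim r$ from membership in $B_g(r)$. This is exactly the claimed bound $\frac{C}{\epsilon^{1/4}} r^{1/4}$. One should be slightly careful that the starting value $H(P)$ might already be large, but since we are only integrating $H^{-1/4}\,dH$-type quantities up to the cutoff value, the upper endpoint dominates and the bound is insensitive to the lower endpoint; and if $P$ already has $H(P) > |f(P)|/\epsilon$ there is nothing to prove.

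The main obstacle I expect is bookkeeping rather than conceptual: one must choose the fibrewise path concretely (e.g. scaling one real coordinate, or a radial-type path within the fibre $\sum z_i^2 = f(P)$) and verify that (i) it genuinely stays in a region where the asymptotic formula (\ref{asymptoticmetricleadingorder}) is valid, i.e. $H$ stays $\gg 1$ — which is fine once $r$ is large since $H$ is only increasing — and (ii) that the estimate $\frac{\sqrt{-1}}{\sqrt{H+a}}\ddbar H$ really does bound the metric along the chosen direction, i.e. the tangent vector of the path is not somehow concentrated in a direction where $\ddbar H$ degenerates. The cleanest way to handle (ii) is to pick the path so that its velocity lies in the $\ddbar H$-positive directions (away from the fibre, or more precisely transverse to the level sets of $H$ within the fibre), mirroring the computation in the proof of the distance lemma where $\frac{d|z|}{ds} \leq C|z|^{1/2}$ was derived; then the same integration $H^{1/4} \leq C \cdot (\text{length})$ applies, and inverting it gives the stated upper bound on the length needed to raise $H$ to the threshold. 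Finally one notes that at the threshold $H = |f(P)|/\epsilon$ the point has $|f| = \epsilon H$, hence lies in $\overline{\C^3 \setminus E_\epsilon}$, completing the estimate.
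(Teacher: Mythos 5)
Your proof is correct and follows essentially the same route as the paper: exit $E_\epsilon$ by moving within the fibre $X_{f(P)}$, and bound the fibrewise length by $C\epsilon^{-1/4}|f(P)|^{1/4}\lesssim C\epsilon^{-1/4}r^{1/4}$ using $|f(P)|\leq Cr$ from the distance lemma. The only cosmetic difference is that you integrate the asymptotic metric directly along an explicit radial-type path in the fibre, whereas the paper cites the scaled diameter bound $\mathrm{diam}(\{H\leq |f|/\epsilon\}\subset X_f)\leq C\epsilon^{-1/4}|f|^{1/4}$ for the Stenzel metric together with the uniform equivalence of $g|_{X_f}$ to the Stenzel metric for $|f|\geq 1$ (adding a constant to cover small $|f|$); the underlying computation is the same.
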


\begin{proof}
We start by observing that on $\{f=1\}$ with the Stenzel metric, there is a uniform diameter bound $\text{diam}(\{ H\leq \frac{1}{\epsilon}  \}  ) \leq \frac{C}{\epsilon^{1/4}}$, for $0<\epsilon<1$. By scaling, on the fibre $X_f$ with the Stenzel metric, the diameter bound reads
\[
\text{diam}(\{ H\leq \frac{|f|}{\epsilon}  \} \subset X_f ) \leq \frac{C}{\epsilon^{1/4}}|f|^{1/4}  .
\]
Now if $|f|\geq 1$, then the metric $g$ restricted to the fibre $X_f$ is uniformly equivalent to the Stenzel metric, because $g$ is by construction essentially the semiflat approximation, for large values of $\eta$. Thus if $P\in X_f \cap E_\epsilon$, we can move the point inside $X_f$ and exit the region $E_\epsilon$ within distance $\frac{C}{\epsilon^{1/4}}(|f|^{1/4}+1)\leq \frac{C}{\epsilon^{1/4}}(r^{1/4}+1)$. Here we add in the constant 1 to make the argument work also for $|f|\leq 1$. This means $d(P, \C^3 \setminus E_\epsilon) \leq \frac{C}{\epsilon^{1/4}}(r^{1/4}+1)\leq \frac{C}{\epsilon^{1/4}}r^{1/4}$.
\end{proof}

\begin{rmk}
If we normalise the metric by a factor $\frac{1}{r^2}$, then $P\in B_{\frac{1}{r^2}g  } (1)$, and this estimate says $d_{ \frac{1}{r^2}g  } (P, \C^3\setminus E_{\epsilon}) \leq \frac{C}{\epsilon^{1/4}r^{3/4} }$. Intuitively, for large $r$ and fixed $\epsilon$, this means the region $E_\epsilon$ has very small solid angle.
\end{rmk}

Now we concentrate on the region $ (\C^3\setminus E_\epsilon) \cap \{H>>1  \}$, with $\epsilon$ a fixed small number. In this region, up to $O(\epsilon)$ error, the asymptote of $\omega$ as $H\to \infty$ simplifies further to
\[
\omega \sim \frac{1}{2}\sqrt{-1}df\wedge d\bar{f}+ \frac{\sqrt{-1} }{ 2\sqrt{H}    }\ddbar{H}- \frac{\sqrt{-1} }{ 4 (\sqrt{H})^3    }\partial{H}\wedge \bar{\partial} H.
\]

Ehresmann's theorem suggests the fibration $f$ restricted to the region $(\C^3\setminus E_\epsilon) \cap \{H>>1  \}$ is a smooth fibration over $\C$. For example, if we calculate the monodromy along the standard vanishing paths, we can obtain an explicit trivialisation:

\begin{lem}
We have a diffeomorphism $\C^3\setminus \{ H=|f| \}  \simeq (X_0\setminus \{0\} )\times \C $ over  $\C  $, given by
\[
z\in \C^3\setminus \{ H=|f| \} \mapsto (w,f)\in (X_0\setminus \{0\})\times \C,
\]
\begin{equation}
w=\frac{1}{2}( {(\frac{H-|f|}{H+|f|})      }^{1/4} +  {(\frac{H+|f|}{H-|f|})      }^{1/4}           )z+ \frac{1}{2}( {(\frac{H-|f|}{H+|f|})      }^{1/4} -  {(\frac{H+|f|}{H-|f|})      }^{1/4}           )\frac{f\bar{z}} {|f|}.
\end{equation}
The inverse map can be described as follows:
\[
z=\frac{1}{2}(e^s+e^{-s}  )w+\frac{1}{2} (\frac{2}{e^s+e^{-s}  }    )\frac{f\bar{w}}{|w|^2 }, \quad e^{-2s}-e^{2s}=\frac{2|f|}{|w|^2 }.
\]
\end{lem}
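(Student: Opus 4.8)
The plan is to construct the diffeomorphism explicitly by solving, fibrewise over $\C$, the equations cutting out $X_0$ from those cutting out $X_f$, and then to verify smoothness away from the discriminant locus $\{H=|f|\}$. First I would recall that both $X_0=\{\sum w_i^2=0\}$ and $X_f=\{\sum z_i^2=f\}$ carry the $SO(n,\R)$-action (here $n=3$), and that the Stenzel construction has already introduced the coordinate $\tau$ via $H=|f|\cosh\tau$; the set $\{H=|f|\}$ is exactly $\{\tau=0\}$, the union of the vanishing cycles, which is why it must be excised. The natural ansatz for the trivialisation is an $\R$-linear (but not $\C$-linear) combination of $z$ and $\bar z$ with coefficients depending only on the $SO(3,\R)$-invariants $H$ and $f$; so I would posit
\[
w = \alpha(H,f)\, z + \beta(H,f)\,\frac{f}{|f|}\bar z,
\]
and impose the two conditions that must hold: $\sum w_i^2 = 0$ (so $w\in X_0$) and the map is a bijection onto $X_0\setminus\{0\}$ preserving $f$.

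Second, I would substitute the ansatz into $\sum w_i^2$. Using $\sum z_i^2 = f$, $\sum \bar z_i^2 = \bar f$, and $\sum z_i\bar z_i = H$, one gets
\[
\sum w_i^2 = \alpha^2 f + \beta^2 \frac{f}{|f|}\cdot\frac{f}{|f|}\,\bar f + 2\alpha\beta\frac{f}{|f|} H
= f\bigl(\alpha^2 + \beta^2 + 2\alpha\beta\, H/|f|\bigr),
\]
so the $X_0$-condition becomes the purely algebraic relation $\alpha^2+\beta^2+2\alpha\beta\,H/|f|=0$, i.e. $(\alpha+\beta)^2 = \alpha\beta\,(2 - 2H/|f|)$ — equivalently, writing $H/|f|=\cosh\tau$, this is solved by $\alpha = \tfrac12(e^{\tau/2}+e^{-\tau/2})\cdot(\cdots)$ up to an overall normalisation; matching with $(\tfrac{H-|f|}{H+|f|})^{1/4} = e^{-\tau/2}$ and $(\tfrac{H+|f|}{H-|f|})^{1/4}=e^{\tau/2}$ gives precisely the stated $\alpha=\tfrac12(e^{-\tau/2}+e^{\tau/2})$ and $\beta=\tfrac12(e^{-\tau/2}-e^{\tau/2})$. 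The remaining normalisation freedom is fixed by requiring that the map be invertible; I would then check directly that the claimed inverse formula $z=\tfrac12(e^s+e^{-s})w+\tfrac12\cdot\tfrac{2}{e^s+e^{-s}}\cdot\tfrac{f\bar w}{|w|^2}$, with $e^{-2s}-e^{2s}=2|f|/|w|^2$, composes with $w(z)$ to the identity, which is a bookkeeping computation in the same invariants ($|w|^2$ on $X_0$ plays the role dual to $\tau$, since on $X_0$ one has $\sum w_i^2=0$ and $s$ is determined by $|w|^2$ and $|f|$).

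Third, I would verify smoothness. Away from $\{H=|f|\}$ we have $\tau>0$, hence $e^{\pm\tau/2}$ and the quantities $(\tfrac{H\pm|f|}{H\mp|f|})^{1/4}$ are smooth positive functions of $z$ (note $H\pm|f|>0$ there, and $f\mapsto f/|f|$ is smooth on $\C\setminus\{0\}$, while on $\{f=0\}\setminus\{0\}$ one has $H>0=|f|$ and the combination $\tfrac{f}{|f|}\bar z$ times $\beta$ must be checked to extend smoothly — in fact $\beta\cdot\tfrac{f}{|f|}$ stays bounded and the product with $\bar z$ is controlled since on $\{f=0\}$ the point already lies on $X_0$ up to scale). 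The map covers $X_0\setminus\{0\}$ because, given $(w,f)$ with $w\neq 0$, the equation $e^{-2s}-e^{2s}=2|f|/|w|^2$ has a unique real solution $s$, producing a unique preimage $z$; and it respects $f$ by construction. Fibrewise this is exactly the statement that $X_f\setminus(\text{vanishing cycle})$ is diffeomorphic to $X_0\setminus\{0\}$, obtained by flowing along the gradient of $\tau$, so the global statement follows by letting $f$ vary smoothly.

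The main obstacle I anticipate is not the algebra of the $X_0$-condition — that reduces cleanly to the $\cosh\tau$ substitution already in hand — but rather checking that the trivialisation extends smoothly across the locus $\{f=0,\ H>0\}$ (the punctured central fibre together with nearby fibres), where the factor $f/|f|$ is singular as a function on $\C$ but the full expression is not, because $\beta\to 0$ there at a compensating rate; one has to expand $\beta = \tfrac12(e^{-\tau/2}-e^{\tau/2})$ near $\tau\to\infty$ (which is where $|f|/H\to 0$) and see the cancellation of the apparent singularity against $\bar z$. A secondary, purely mechanical nuisance is confirming the inverse formula genuinely inverts the forward map rather than merely being a plausible-looking candidate; this I would settle by a direct substitution, tracking only the invariants $H$, $|f|$, $|w|^2$ and the relation $\cosh\tau = e^s\cdot(\text{something})$, so that the vector identities collapse to scalar ones.
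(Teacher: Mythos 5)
The paper states this lemma without proof, so the only yardstick is whether your verification would actually close. Your overall strategy --- reduce the condition $w\in X_0$ to the scalar identity $\alpha^2+\beta^2+2\alpha\beta H/|f|=0$ via $\sum z_i^2=f$, $\sum z_i\bar z_i=H$, then check the inverse formula and the smooth extension across $\{f=0\}$ --- is the right one, and your reduction $\sum w_i^2=f\bigl(\alpha^2+\beta^2+2\alpha\beta H/|f|\bigr)$ is correct. But the step you use to conclude that the stated coefficients solve this identity is false: with $H=|f|\cosh\tau$ one has $\frac{H-|f|}{H+|f|}=\frac{\cosh\tau-1}{\cosh\tau+1}=\tanh^2(\tau/2)$, so $\bigl(\frac{H-|f|}{H+|f|}\bigr)^{1/4}=\tanh^{1/2}(\tau/2)$, not $e^{-\tau/2}$. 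Indeed $\alpha=\cosh(\tau/2)$, $\beta=-\sinh(\tau/2)$ do \emph{not} satisfy the identity: they give $\cosh\tau-\sinh\tau\cosh\tau\neq 0$. Followed literally, your ``matching'' would produce coefficients different from the stated ones and the verification would fail.

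The repair is immediate and stays within your framework. Set $u=\bigl(\frac{H-|f|}{H+|f|}\bigr)^{1/4}$, $v=u^{-1}$, so $\alpha=\frac12(u+v)$, $\beta=\frac12(u-v)$, $uv=1$. Then $\alpha^2+\beta^2=\frac12(u^2+v^2)=\frac{H}{\sqrt{H^2-|f|^2}}$ and $2\alpha\beta=\frac12(u^2-v^2)=\frac{-|f|}{\sqrt{H^2-|f|^2}}$, whence $\alpha^2+\beta^2+2\alpha\beta H/|f|=0$ by direct substitution. The exponential parameter that genuinely linearises the coefficients is the $s$ of the inverse formula, not $\tau/2$: one has $\frac{H-|f|}{H+|f|}=e^{4s}$ with $H=|w|^2\cosh 2s$, $|f|=-|w|^2\sinh 2s$ (so $s\le 0$), hence $\alpha=\cosh s$, $\beta=\sinh s$, and the $X_0$-condition reads $\cosh 2s+\sinh 2s\cdot H/|f|=0$, i.e.\ $H/|f|=-\coth 2s$. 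With this correction the rest of your plan goes through: $|w|^2=(\alpha^2+\beta^2)H+2\alpha\beta|f|=\sqrt{H^2-|f|^2}$; the normalisation $\alpha^2-\beta^2=uv=1$ is precisely what makes the composite with the claimed inverse collapse to the identity (the cross term in $\bar z$ cancels and the $z$-coefficient becomes $(\alpha^2-\beta^2)^{-1}\cdot$(something equal to $1$)); the equation $-2\sinh 2s=2|f|/|w|^2$ has a unique real root, giving surjectivity onto $(X_0\setminus\{0\})\times\C$; and your observation that $\beta\sim -|f|/(2H)$ as $|f|/H\to 0$ cancels the $f/|f|$ singularity across the central fibre is correct and is indeed the only delicate smoothness point.
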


The above formulae are quite tedious to work with. But in the region $\{ |f|\leq \epsilon H \}$, where $\epsilon$ is small, then $s$ is almost zero, and the above formulae are well approximated by a much simpler parametrisation over the base $\C$:
\begin{equation}
\chi: X_0\times \C \to \C^3, \quad z=w+\frac{f\bar{w}}{2|w|^2 }.
\end{equation}

The region $\{ |f|\leq \epsilon H, H>>1 \}$ corresponds roughly to $\{ |f|\leq \epsilon |w|^2, |w|^2>>1 \}$.
We notice $\chi$ is not exactly holomorphic, but only approximately so up to an error of order $O(\epsilon)$. Another caveat is that $w\in X_0$, so in coordinates $\sum w_i^2=0$.

We now study the asymptotic formula of the metric tensor associated to $\omega$ via the parametrisation $\chi$. The base term $\frac{1}{2}\sqrt{-1}df\wedge d\bar{f}$ pulls back exactly to $\frac{1}{2}\sqrt{-1}df\wedge d\bar{f}$, because $\chi$ is defined over the base. The function $H$ pulls back to $|w|^2 (1+O(\epsilon))$. To calculate the other terms we need to be careful about the failure of holomorphicity. This means we need to directly pull back the Riemmanian metric tensor. 

The term $\sqrt{-1}\ddbar{H}$ defines a two-tensor $\sum dz_i \otimes d\bar{z_i}=\sum d\bar{z_i} \otimes d{z_i}$, where to save writing we have passed to the symmetrization. 
The Riemannian metric tensor is two times its real part. Via $\chi$, we can write $dz_i=dw_i+df \frac{w_i}{2|w|^2}+\frac{f}{2|w|^2}d\bar{w_i} -\frac{f\bar{w_i}}{2|w|^4 }d|w|^2
$. We compute
\[
\begin{split}
Re(\sum dz_i \otimes d\bar{z_i})=& Re \{(\sum dw_i\otimes d\bar{w_i}) (1+ \frac{|f|^2}{ 4|w|^4}) \\
 + & (\frac{\bar{f} }{|w|^2}d{w_i}\otimes dw_i  -\frac{f}{ 2|w|^4 } d\bar{f} \otimes \bar{w_i} dw_i ) +\frac{df\otimes d\bar{f} }{4|w|^2 } \}.
\end{split}
\]
Up to $O(\epsilon)$ relative error, this can be simplified as 
\[
Re \{\sum dw_i\otimes d\bar{w_i}     -\frac{f}{ 2|w|^4 } d\bar{f} \otimes \bar{w_i} dw_i  +\frac{df\otimes d\bar{f} }{4|w|^2 } \}.
\]
In the presence of the base metric term $ Re (df\otimes d\bar{f} )$, we can simplify further to get the asymptote up to $O(\epsilon)$ error:
\[
H^{-1/2} Re \sum dz_i \otimes d\bar{z_i} \sim |w|^{-1} Re\sum dw_i\otimes d\bar{w_i}.
\]
Similarly the term $\sqrt{-1}\partial{H}\wedge \bar{\partial} H$ defines a symmetric two-tensor $\sum \bar{z_i}dz_i\otimes z_jd\bar{z_j}$. We have the asymptote up to $O(\epsilon)$ error
\[
H^{-3/2} Re\sum \bar{z_i}dz_i\otimes z_jd\bar{z_j} \sim |w|^{-3} Re\sum \bar{w_i}dw_i\otimes w_jd\bar{w_j}.
\]
 To summarize,
 
 \begin{lem}
The Riemmanian metric $g$ associated to $\omega$ admits the approximate formula in the region $\{ |f|\leq \epsilon H, H>>1 \}$:
\begin{equation}
\chi^*g=  Re ( df\otimes d\bar{f}+ |w|^{-1}\sum dw_i\otimes d\bar{w_i}-\frac{1}{2}  |w|^{-3} \sum \bar{w_i}dw_i\otimes w_jd\bar{w_j}                   )(1+O(\epsilon))
\end{equation}
\ie up to relative error of order $O(\epsilon)$, the metric $g$ is asymptotic to the product metric $g_{product}$ on $X_0\times \C$, where the $X_0$ factor is equipped with the Stenzel metric.
 \end{lem}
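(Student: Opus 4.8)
\emph{Proof sketch.} The plan is to take the leading--order asymptote of $\omega$ from (\ref{asymptoticmetricleadingorder}), simplify it in the region at hand, pass to the associated Riemannian $2$--tensor, and then substitute the explicit non--holomorphic change of variables $\chi$, tracking all $O(\epsilon)$ errors. First I would note that in $(\C^3\setminus E_\epsilon)\cap\{H\gg 1\}$ one has $a\sim\eta^{1/2}=|f|\leq\epsilon H$, hence $H+a=H(1+O(\epsilon))$, so (\ref{asymptoticmetricleadingorder}) reduces up to relative $O(\epsilon)$ error to $\omega\sim\frac12\sqrt{-1}df\wedge d\bar f+\frac{\sqrt{-1}}{2\sqrt H}\ddbar H-\frac{\sqrt{-1}}{4(\sqrt H)^3}\partial H\wedge\bar\partial H$, whose associated metric tensor is $g\sim Re(df\otimes d\bar f)+H^{-1/2}Re\sum dz_i\otimes d\bar z_i-\frac12 H^{-3/2}Re\sum\bar z_i dz_i\otimes z_j d\bar z_j$.

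Next, since $\chi\colon X_0\times\C\to\C^3$ is fibred over the base, the term $Re(df\otimes d\bar f)$ pulls back to itself. Using $z=w+\frac{f\bar w}{2|w|^2}$ together with the quadric relation $\sum w_i^2=0$ on $X_0$ (which kills the cross term $\frac{\bar f}{2|w|^2}\sum w_i^2$) and the bound $|f|\leq\epsilon|w|^2$, I would verify $H\circ\chi=|w|^2\bigl(1+O(\epsilon^2)\bigr)$, and compute
\begin{equation*}
\chi^* dz_i=dw_i+df\,\frac{w_i}{2|w|^2}+\frac{f}{2|w|^2}d\bar w_i-\frac{f\bar w_i}{2|w|^4}\,d|w|^2 .
\end{equation*}
I would then expand $Re\sum(\chi^*dz_i)\otimes\overline{\chi^*dz_i}$ and $Re\sum\overline{\chi^*z_i}\,\chi^*dz_i\otimes\chi^*z_j\,\overline{\chi^*dz_j}$ and organise the output into three groups: the diagonal $X_0$--contributions $Re\sum dw_i\otimes d\bar w_i$ and $Re\sum\bar w_i dw_i\otimes w_j d\bar w_j$; a base contribution proportional to $df\otimes d\bar f$ with coefficient $O(|w|^{-2})$, hence negligible against $Re(df\otimes d\bar f)$; and mixed $dw$--$df$ terms, each carrying a factor $f/|w|^k$. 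A Cauchy--Schwarz bound (a cross term is controlled by the geometric mean of the two diagonal blocks it couples) shows the mixed terms are of relative order $O(\epsilon)$ once the base block $Re(df\otimes d\bar f)$ is present.

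Dividing these by $H^{1/2}=|w|(1+O(\epsilon))$ and $H^{3/2}=|w|^3(1+O(\epsilon))$ as appropriate and reassembling, I would arrive at
\begin{equation*}
\chi^* g=Re\Bigl(df\otimes d\bar f+|w|^{-1}\sum dw_i\otimes d\bar w_i-\tfrac12|w|^{-3}\sum\bar w_i dw_i\otimes w_j d\bar w_j\Bigr)\bigl(1+O(\epsilon)\bigr).
\end{equation*}
Finally, by the $n=3$ Stenzel computation (K\"ahler potential $\phi=\sqrt{H+|y|}$, i.e.\ $\phi=\sqrt H$ on $X_0$), the $w$--part of this tensor is exactly the Stenzel metric on $X_0$, so $\chi^*g$ is within relative $O(\epsilon)$ of the product metric $g_{product}$, which is the claim.

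\emph{Main obstacle.} The only real work is the error bookkeeping in the non--holomorphic pullback: confirming that every mixed $dw$--$df$ term and every term of the form $\bar f\,dw_i\otimes dw_i$ is genuinely of relative order $O(\epsilon)$ when weighed against the appropriate diagonal block plus the base block, and checking that restricting to the quadric $X_0$ (where $\sum w_i dw_i=0$, so $d|w|^2=2Re\sum\bar w_i dw_i$) does not resurrect any of these contributions at leading order. Everything else is a direct substitution into (\ref{asymptoticmetricleadingorder}).
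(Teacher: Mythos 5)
Your proposal is correct and follows essentially the same route as the paper: simplify (\ref{asymptoticmetricleadingorder}) to the $a$-independent form in the complement of $E_\epsilon$, pull back the real metric tensor through the non-holomorphic map $\chi$ using the same expansion of $\chi^* dz_i$, and absorb the mixed $dw$--$df$ terms, the $\bar f\,dw_i\otimes dw_i$ terms, and the $O(|w|^{-2})\,df\otimes d\bar f$ remainder into the diagonal blocks at relative order $O(\epsilon)$. The only cosmetic imprecision is the justification of $H+a=H(1+O(\epsilon))$ via $a\sim|f|$, which should rather be $a\leq C\max(|f|,H^{1/4})=O(\epsilon H)$ for $H$ large (since $a$ may be dominated by $H^{1/4}$ when $|f|$ is small), but this changes nothing in the argument.
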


The information at our disposal now allows us to identify the tangent cone at infinity for the metric $g$ associated to $\omega$.

\begin{prop}
The tangent cone at infinity is isometrically $X_0\times \C$ with the product metric.
\end{prop}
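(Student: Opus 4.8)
The plan is to establish the two defining properties of the tangent cone at infinity: that the rescaled metrics $\lambda^{-2}g$ converge in the pointed Gromov-Hausdorff sense as $\lambda\to\infty$ (to some length space), and that this limit is isometric to the metric cone $X_0\times\C$ with the stated product metric. Since we already have maximal volume growth, a lower Ricci bound (in fact $\mathrm{Ric}=0$), and the explicit asymptotic formulae proved above, the bulk of the argument is to patch the local asymptotic description into a global statement about the rescaled limit.

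First I would record that $X_0\times\C$ with the product of the (conical) Stenzel metric and the flat metric on $\C$ is itself a metric cone: the Stenzel metric on $X_0=\{\sum w_i^2=0\}$ is, for $n=3$, exactly the flat cone metric $|w|^{-1}\sum dw_id\bar w_i-\tfrac12|w|^{-3}\sum\bar w_idw_i\otimes w_jd\bar w_j$ on the quadric cone $X_0$, which is $\C^2/\Z_2$ (the cone over $S^3/\Z_2$ with a rescaled round metric); the product of a cone with $\C=\R^2$ is again a cone, with link the spherical join of $S^3/\Z_2$ with a circle of the appropriate radius, so $X_0\times\C$ is genuinely $\C^2/\Z_2\times\C$ as a metric cone, consistent with the abstract in the statement of Theorem~1.1. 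This reconciles the proposition's phrasing with the tangent cone claimed in the introduction.

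Next I would prove the Gromov-Hausdorff convergence $(\C^3,\lambda^{-2}g,0)\to(X_0\times\C, g_{product},\mathrm{pt})$. The strategy is to split $\C^3$ into the squashed region $E_\epsilon=\{|f|\ge\epsilon H\}$ and its complement. By Lemma~\ref{squash}, any point of $B_{\lambda^{-2}g}(1)\cap E_\epsilon$ lies within distance $C\epsilon^{-1/4}\lambda^{-3/4}$ of the complement in the rescaled metric, so after rescaling $E_\epsilon$ contributes nothing to the GH limit: it collapses onto $\C^3\setminus E_\epsilon$. On the complement $\{|f|\le\epsilon H, H\gg1\}$, the lemma computing $\chi^*g$ gives that $\chi$ pulls $g$ back to $g_{product}$ up to multiplicative error $1+O(\epsilon)$, and since $g_{product}$ is exactly scale-invariant ($H^{1/2}$ scales like $|w|$, so the metric is homogeneous of degree... in fact the product of a cone and $\R^2$ scales correctly), the rescaling $\lambda^{-2}g$ restricted there is $(1+O(\epsilon))$-bi-Lipschitz to a ball in $g_{product}$. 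Letting first $\lambda\to\infty$ and then $\epsilon\to0$ (a diagonal argument: the $O(\epsilon)$ errors are uniform on each fixed rescaled ball once $\lambda$ is large), one gets that every rescaled limit is $(1+O(\epsilon))$-close to the corresponding ball in $X_0\times\C$, hence equal to it. One should also check completeness/properness of the limit and that no extra mass hides where $H$ is bounded — but there the rescaled metric shrinks that whole region to the basepoint, since $H\le R$ has bounded $g$-diameter while distances from $0$ in the rescaled metric blow up elsewhere.

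The main obstacle is the interchange of the two limits $\lambda\to\infty$ and $\epsilon\to0$, together with ensuring the $E_\epsilon$ region really is negligible uniformly: one must verify that the $O(\epsilon)$ error in $\chi^*g$ and the $O(\epsilon)$-holomorphicity of $\chi$ do not accumulate over the (rescaled) distances of order $1$ we care about, i.e. that the relative error stays $O(\epsilon)$ along paths of bounded rescaled length rather than integrating up. This is where the precise form of Lemma~\ref{squash} and the fibrewise comparison with the Stenzel metric (valid for $|f|\ge1$, i.e. after rescaling on the whole relevant range) are essential; modulo that bookkeeping, uniqueness of the tangent cone and its identification follow. A clean way to package this is: show directly that the function pair $(f,\,$ some $\C^2/\Z_2$-valued near-holomorphic chart from $\chi)$ gives an $\epsilon$-GH approximation $B_{\lambda^{-2}g}(0,R)\to B_{g_{product}}(\mathrm{pt},R)$ for all large $\lambda$, with $\epsilon=\epsilon(\lambda,R)\to0$, which is exactly the definition of pointed GH convergence to $X_0\times\C$, and then invoke that a tangent cone at infinity is unique once such convergence holds along the full scale.
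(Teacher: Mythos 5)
Your proposal is correct and follows essentially the same route as the paper: fix $\epsilon$, rescale by $r^{-2}$, use Lemma \ref{squash} to show $E_\epsilon$ contributes only $O(\epsilon)$ to the Gromov--Hausdorff distance, use the formula for $\chi^*g$ on the complement together with the conicality (scale-invariance) of $g_{product}$, and then let $\epsilon\to 0$. The extra remarks on reconciling $X_0\times\C$ with $\C^2/\Z_2\times\C$ and on the uniformity of the $O(\epsilon)$ errors are sensible bookkeeping that the paper leaves implicit, but they do not change the argument.
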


\begin{proof}
Clearly only asymptotic properties of the metric are relevant. Fix a small number $\epsilon$. Consider large values of $r$, and apply the scaling factor $\frac{1}{r^2}$ to the metric $g$. In the ball $B_{\frac{1}{r^2}g} (0, D )$ of fixed radius $D$, if there is a point $P$ in the region $E_\epsilon$, then by the lemma \ref{squash},  its distance to $\C^3\setminus E_\epsilon$ is controlled above by $\frac{CD^{1/4} }{\epsilon^{1/4} r^{3/4} } <C\epsilon$ for large $r$. Thus the only important contribution comes from the region $(\C^3\setminus E_\epsilon ) \cap \{H\geq C\}$. Up to $O(\epsilon)$ error, $(\C^3\setminus E_\epsilon) \cap B_{\frac{1}{r^2} g  }(0,D)     $ is Gromov-Hausdorff close to the metric space
\[
( B_{g_{product}}(0, rD) \cap  \{ |f|\leq \epsilon |w|^2   \}             , \frac{1}{r^2}g_{product}  ) \subset X_0\times \C. \]
Now we observe that $X_0\times \C$ with the product metric is conical, and scaling features imply that for large $r$ the region $\{|f|\geq \epsilon |w|^2 \}$ is unimportant. Thus the metric space is Gromov-Hausdorff close to $( B_{g_{product}}(0, rD)             , \frac{1}{r^2}g_{product}  )  $, which is the same as $(B_{g_{product}}(0, D)             , g_{product}  )     $. From this we see that as $r\to \infty$, $B_{\frac{1}{r^2}g} (0, D )$ Gromov-Hausdorff converges to $(B_{g_{product}}(0, D)             , g_{product}  )     $, for any fixed $D$, so the tangent cone is indeed $X_0\times \C$ with the product metric.
\end{proof}

\begin{rmk}
(Jumping of complex structure) The complex structure on the tangent cone differs from the original manifold $\C^3$. This can be explained by the flat family of subvarieties of $\C_y\times \C^3_{z_1,z_2,z_3}$, defined as
\[
\{  (y,z_1,z_2,z_3) | ty=z_1^2+z_2^2+z_3^2            \},
\]
whose generic fibre is just a graph over $\C^3$, and the central fibre is $X_0\times \C$.
\end{rmk}

\begin{rmk}(Measure theoretic paradox)\label{measuretheoryparadox}
The link of the tangent cone is singular. The singularity corresponds to the node in $X_0$, and the cone $E_\epsilon$ collapses to the nodal line in $X_0\times \C$. This behaviour is counterintuitive as $E_\epsilon$ seems to carry a large amount of Lebesgue measure, which disappears in the limit, but general theory of noncollapsing Gromov-Hausdorff convergence implies measure theoretic convergence. But this is not a contradiction because in a large ball with respect to the metric $g$, the set $E_\epsilon$ only occupies a very small portion of the measure.	

H-J. Hein pointed out an analogy with the Taub-NUT metric which may help to understand this measure theoretic picture. By a well known observation of LeBrun \cite{LeBrun}, the Taub-NUT space is biholomorphic to $\C^2$ with the standard holomorphic volume form. In fact the holomorphic symplectic moment map makes $\C^2$ into a fibration of affine quadrics, exactly analogous to our situation. The counterintuitive fact then is that the Taub-NUT metric has cubic volume growth. The explanation again lies in the large distortion of distance. The distinctive feature of $n=2$ is that the quadric fibres are isomorphic to $\C^*$, whose natural cylindrical metric has linear growth rate, which is not maximal. The Taub-NUT metric is asymptotically described by the semiflat metric coming from the quadric fibration; the growth rate is quadratic in the base direction and linear in the fibre direction, thus the overall growth is cubic.
\end{rmk}

\subsection{Topological discussions}\label{topologicaldiscussions}

The topological situation is related to the general story of Milnor fibrations associated to hypersurface singularities. Let us dispense with the metric geometry for a moment. In the ODP context, the sphere $S^5=\{H=1\}$ intersects $\C^3\setminus X_0$ in an open set, which is just the complement of the link $X_0\cap S^5$ of $X_0$. We have a fibration over the circle
\[
S^5\cap ( \C^3\setminus X_0       )\to S^1, \quad z\mapsto \frac{f}{|f|}.
\]
The fibre is topologically identified with the Milnor fibre $X_1$. This is because via the radial projection map, this fibration is identified with 
\[
f: f^{-1}(S^1)\subset \C^3 \to S^1.
\]
The monodromy around the circle is described by the standard Picard-Lefschetz formula.

The analogous picture on the tangent cone $X_0\times \C$ is the following. We have a fibration
\[
\{ |w|+ |f|^2=1 \} \cap (X_0\times \C \setminus X_0\times\{0\} ) \to S^1, \quad (w,f)\mapsto \frac{f}{|f|}.
\]
The fibre is homeomorphic to $X_0\simeq \C^2/\Z_2$. The effect of the metric geometry is to collapse the vanishing cycle to a point, thereby changing $X_1$ to $X_0$. 

More explicitly, in our metric $g$, the vanishing cycle $\{ H=|f|\}$ on $X_f$ for large $|f|$ has diameter proportional to $|f|^{1/4}$, but the distance of the vanishing cycle to the origin is of order $|f|$. The tangent cone only sees the regions whose diameter has at least linear growth rate, so the vanishing cycle disappears in the limit. This phenomenon also appears in the announced work of Hein and Naber on $A_k$ type singularities.

The link of the tangent cone $X_0\times \C\simeq \C^2/\Z_2 \times \C$ is $S( \C^2/\Z_2 \times \C  )$, which is singular along a circle $S^1$. The appearance of the circle is explained by the $U(1)$ symmetry of the whole setup. Metrically, the points on the circle correspond to the Gromov Hausdorff limit of the vanishing cycles.

It is also interesting to make some comparisons of our situation with Joyce's QALE spaces (\cf \cite{Joyce}). The main common feature is that the tangent cone at infinity is a finite group quotient $\C^n/G=\C^3/\Z_2$, where the main complication comes from the non-free action of $\Z_2$. However, our setup does not seem to fit naturally into Joyce's framework, not even at the topological level. Joyce requires the ambient manifold to be a resolution of the quotient $\C^n/G$, which is not a natural structure on $\C^3$. Joyce understands $\C^n/G$ in terms of a stratification by fixed point sets of subgroups of $G$, and describes the structure of his resolution in a stratified manner in terms of what he calls the local product resolution; the simplest local picture looks like 
\[
Y\times \C^m \to \C^{n-m}/H \times \C^m,
\]
where $Y$ is a resolution of $\C^{n-m}/H$ and $H$ is a subgroup of $G$. Here $Y$ plays a similar role as $X_1$.
In my opinion his framework does not typically incorporate the Picard-Lefschetz monodromy behaviour, as his product structures are usually globally trivial over stratified pieces. At the metric level, on the local piece $Y\times \C$, Joyce requires asymptotic convergence of the metric to the product metric. In our case the vanishing cycles do not have constant size, but instead grow slowly as $O(a^{1/4})$ with the distance to the origin, so the metric in $E_\epsilon$ cannot converge to a product metric.

\subsection{The Ricci curvature}\label{TheRiccicurvatureofthemetricansatz}

Having examined the leading order behaviour of the metric, let us compute the Ricci tensor for $H>>1$. The formula for the Ricci form is 
\[
\text{Ric}=-\sqrt{-1}\ddbar{\\ \log \det(g_{i \bar{j} } )   }.
\]
Suppose we write $v=\det(g_{i \bar{j} } )$ for the volume density function. Then 
\[
\text{Ric}= \frac{ \sqrt{-1}\partial{v}\wedge \bar{\partial}v   }{v^2} -\frac{\sqrt{-1} \ddbar{v} }{v}.
\]
The nature of our ansatz means $v$ depends only on $H,\eta$. We have
\[
\begin{split}
\sqrt{-1}\partial\bar{\partial} v=& v_H\sqrt{-1} \partial\bar{\partial}H+v_{HH}\sqrt{-1} \partial{H}\wedge \bar{\partial} H
+\sqrt{-1} v_{H\eta} (\bar{f} df\wedge \bar{\partial}H +f \partial{H}  \wedge d\bar{f}         )\\
&+
(\eta v_{\eta\eta}  +v_\eta )\sqrt{-1}df d\bar{f}   .
\end{split}
\]
\[
\sqrt{-1}\partial{v}\wedge \bar{\partial}v =
v_H^2 \sqrt{-1}\partial{H}\wedge \bar{\partial}H
+\sqrt{-1} v_H v_\eta (  \partial{H}\wedge f d\bar{f}+ \bar{f} df \wedge \bar{\partial}{H}     ) +v_\eta^2 \eta \sqrt{-1} df\wedge d\bar{f}.
\]
The asymptotic expression is given by (\ref{asymptoticvolume1}).
The detailed expression of $v$ is very complicated, but there are some general features of its derivatives which can be used to our advantage. Everytime we differentiate with respect to $H$, we gain a suppression factor $O(\frac{1}{H})$; everytime we differentiate with respect to $\eta$, we gain $O(\frac{1}{a^2})$. To leading order, $v$ is a constant.

 We can use the above principle to estimate $\text{Ric}$. The worst term in (\ref{asymptoticvolume1}) is $\frac{1}{H^{1/2}a}$. It is clear by computation that the terms showing up in the quadratic expression $\sqrt{-1}\partial{v}\wedge \bar{\partial}v$ are much smaller than the terms in the second derivative $\sqrt{-1}\partial\bar{\partial} v$. This reflects the fact that the approximation is good enough for nonlinear effect to be insignificant in the asymptotic regime. Comparing the computation with the asymptotic formula
 \[
 \omega\sim \frac{1}{2}df\wedge d\bar{f}+ \frac{\sqrt{-1} }{ 2\sqrt{H+a}    }\ddbar{H}- \frac{\sqrt{-1} }{ 4 (\sqrt{H+a})^3    }\partial{H}\wedge \bar{\partial} H ,
 \]
 we see $  -\frac{C}{aH}\omega    \leq  \text{ Ric } \leq \frac{C}{aH}\omega$. Thus the Ricci tensor is controlled by the metric:
 \[
 |\text{Ricci tensor}| \leq \frac{C}{aH}g. 
 \]
 This indicates several features of the approximate ansatz. First, recall the distance to the origin is comparable to $a$. In the metric $g$, a typical point has $H\sim a^4$, so the Ricci tensor decays as $O(\frac{1}{a^5})$, which is quite fast. Second, in the worst region $E_\epsilon$, $H$ is comparable to $a$, so this ansatz only gives quadratic decay of the Ricci curvature. The scalar curvature likewise decays as $O(\frac{1}{a^2})$ in the region $E_\epsilon$.
 
 The last observation links this discussion to the work of Hein \cite{Hein1} on weighted Sobolev inequalities, and their applications to Monge-Amp\`ere equations (\cf also Hein's PhD thesis \cite{Hein2}). The main technical condition there is 
 \begin{Def}
 A complete noncompact Riemannian manifold $(M,g)$ is called $SOB(\beta)$ if there is a point $x_0\in M$ and $C\geq 1$, such that the annulus $A(x_0, s,t)$ is connected for all $t>s\geq C$, $|B(x_0,s)|\leq Cs^\beta$ for all $s\geq C$, and $|B(x, (1-\frac{1}{C}) r(x)   )|\geq \frac{1}{C} r(x)^\beta$, and $\text{Ricci} \geq -C r(x)^{-2} $ if $r(x)= d(x_0, x) \geq C$.
 \end{Def}
 
 \begin{prop}
 The metric ansatz (\ref{smoothingexample2scaled}) satisifies the condition $SOB(6)$.
 \end{prop}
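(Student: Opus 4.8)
The plan is to verify the four constituent conditions of $SOB(6)$ separately, taking $x_0$ to be the origin and using the asymptotic description of $g$ established in the previous subsections, together with the comparison $r(P) := d(0,P) \sim |f| + H^{1/4} \sim a$ for $H > 1$ (and $r$ bounded on $\{H \le 1\}$, which is a relatively compact region causing no trouble). The volume bound $|B(0,s)| \le Cs^6$ is exactly the upper half of the maximal volume growth lemma, valid for $s$ large; for bounded $s$ it follows from local compactness after enlarging $C$. The reverse-type interior ball bound $|B(P, (1-\tfrac1C) r(P))| \ge \tfrac1C r(P)^6$ I would extract from the \emph{same} Lebesgue-integration argument used in the maximal volume growth lemma: a ball of definite fraction of the radius around $P$ contains, in the $(w,f)$ coordinates, a region that looks like a fixed-size piece of the Stenzel fibre times a disc in the base of radius $\sim r(P)^{-3}$ after the rescaling, giving volume $\sim r(P)^6$; one has to be slightly careful when $P \in E_\epsilon$, but there the squashing lemma \ref{squash} says $P$ is within distance $\sim r(P)^{1/4} \ll r(P)$ of the good region, so the ball of radius $(1-\tfrac1C)r(P)$ still reaches deep into $\C^3 \setminus E_\epsilon$ and the estimate survives.

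Next, the Ricci lower bound $\mathrm{Ric} \ge -C r(P)^{-2} g$ for $r(P) \ge C$ is precisely the content of the curvature computation in Subsection \ref{TheRiccicurvatureofthemetricansatz}: there we found $|\mathrm{Ric}| \le \frac{C}{aH} g$, and since $H \ge $ const on the region in question while $a \sim r$, we get in particular $|\mathrm{Ric}| \le \frac{C}{a} g \le \frac{C}{r} g$; but actually we need the sharper $r^{-2}$ decay, which holds because the worst case $H \sim a$ occurs only in $E_\epsilon$ where still $\frac{1}{aH} \sim \frac{1}{a^2} \sim \frac{1}{r^2}$, and elsewhere $H$ is larger so the bound is even better. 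So $\mathrm{Ric} \ge -C r^{-2} g$ throughout $\{r \ge C\}$, as required.

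The remaining, and I expect slightly more delicate, point is the connectedness of the annuli $A(0,s,t)$ for all $t > s \ge C$. Here I would argue from the structure of the distance function: since $d(0,P) \sim |f| + H^{1/4}$, an annulus $\{s \le d(0,\cdot) \le t\}$ is, up to uniform constants, comparable to a set of the form $\{c_1 s \le |f| + H^{1/4} \le c_2 t\}$ in $\C^3$. One then checks this region is path-connected: given two points, one can first move them (staying in the annulus, using the cone-like scaling of $|f|+H^{1/4}$ and the fact that $\C^3 \setminus \{H = |f|\}$ retracts nicely, or simply by moving along $SO(3,\R) \times U(1)$ orbits and radial-type paths) into a normal form, then join them; the only obstruction to connectivity of a Euclidean annulus is in dimension one, and here the real dimension is six. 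A clean way to package this: the rescalings $z \mapsto \lambda z$ act on $\C^3$, scaling $|f| \mapsto \lambda^2 |f|$ and $H^{1/4} \mapsto \lambda^{1/2} H^{1/4}$, so every annulus is, up to bounded distortion, the image of a fixed compact annular region under a connected family of maps, hence connected. I would present this as the main obstacle mostly because it requires a little care with the non-homogeneity of the two pieces $|f|$ and $H^{1/4}$; everything else is a direct citation of results already proved above.

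\begin{proof}
We take $x_0 = 0$. By the distance estimate of the lemma in the asymptotic metric subsection, $r(P) = d(0,P)$ is uniformly equivalent to $|f| + H^{1/4}$, hence to $a$, on $\{H > 1\}$, and is uniformly bounded on the relatively compact region $\{H \le 1\}$. After enlarging $C$ we may therefore restrict attention to $\{H \ge C\}$ for all the scale-dependent statements.

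The global volume bound $|B(0,s)| \le Cs^6$ for $s \ge C$ is the upper inequality of the maximal volume growth lemma.

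For the interior volume bound, let $P$ with $r(P) = r \ge C$. If $P \in \C^3 \setminus E_\epsilon$, the ball $B(P,(1-\tfrac1C)r)$ contains, in the $\chi$-parametrisation, a region uniformly equivalent to a fixed compact piece of the Stenzel fibre $X_0$ times a base disc of radius $\sim r^{-3}$ after the rescaling by $\tfrac{1}{r^2}$; by the computation in the maximal volume growth lemma this has $g$-volume $\sim r^6$. If $P \in E_\epsilon$, then by the squashing lemma $d(P, \C^3 \setminus E_\epsilon) \le C\epsilon^{-1/4} r^{1/4} \le \tfrac{1}{2C} r$ for $r$ large, so $B(P,(1-\tfrac1C)r)$ contains a ball of radius $\sim r$ centred in $\C^3 \setminus E_\epsilon$, and the previous case applies. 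In all cases $|B(P,(1-\tfrac1C)r(P))| \ge \tfrac1C r(P)^6$.

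The Ricci lower bound follows from Subsection \ref{TheRiccicurvatureofthemetricansatz}, where it is shown that $|\mathrm{Ric}| \le \frac{C}{aH} g$. On $\{H \ge C\}$ we have $a \sim r$, and $aH \ge c\, a^2 \sim r^2$ since $H \ge c\,a$ everywhere (worst case being $E_\epsilon$, where $H \sim a$, and better elsewhere). Hence $\mathrm{Ric} \ge -C r(P)^{-2} g$ whenever $r(P) \ge C$.

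Finally, for connectedness of the annuli: for $t > s \ge C$, the annulus $A(0,s,t)$ is, up to uniform constants, comparable to $\{ c_1 s \le |f| + H^{1/4} \le c_2 t\} \subset \C^3$. The dilations $z \mapsto \lambda z$ scale $|f| \mapsto \lambda^2 |f|$ and $H^{1/4} \mapsto \lambda^{1/2} H^{1/4}$, and the $SO(3,\R) \times U(1)$ action preserves the level sets of $(H, |f|)$; using these together with radial paths one sees that $\{ c_1 s \le |f| + H^{1/4} \le c_2 t \}$ deformation retracts onto a fixed annular region and is path-connected (the only dimension in which an annulus can disconnect is one, and here the real dimension is six). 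Hence $A(0,s,t)$ is connected for all $t > s \ge C$.

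Combining the four properties, the metric ansatz (\ref{smoothingexample2scaled}) is $SOB(6)$.
\end{proof}
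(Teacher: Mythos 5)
Your proof is correct and follows essentially the same route as the paper's, which simply invokes the maximal volume growth lemma and the at-worst-quadratic Ricci decay from the curvature subsection. You supply more detail than the paper does on the interior ball bound (via the squashing lemma) and on annulus connectedness, both of which the paper leaves implicit, and your observation that $|f|\le H$ forces $H\ge ca$, hence $\frac{1}{aH}\le \frac{C}{a^2}\sim \frac{C}{r^2}$, is exactly the point behind the paper's phrase ``at worst quadratic Ricci curvature decay.''
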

 
 \begin{proof}
 The main point is that we know the maximal volume growth and the at worst quadratic Ricci curvature decay. The completeness property is clear from the asymptotic formula.
 \end{proof}
 
 We take the opportunity to explain another technical condition in Hein's work.
 \begin{Def}(\cf \cite{Hein2}, Definition 4.2)
 A $C^{k,\alpha}$ quasi-atlas on a complete K\"ahler manifold $(M,\omega)$ is a collection $\Phi_x: x\in A, A\subset M$ of holomorphic local diffeomorphisms $\Phi_x: B\to M, \Phi_x(0)=x,$ from $B=B(0,1)\subset \C^n$ into $M$ which extend smoothly to the closure $\bar{B}$, and such that there exists $C\geq 1$ with $\text{inj}(\Phi_x^*g) \geq 1/C$, $\frac{1}{C}g_{\C^n} \leq \Phi_x^*g\leq Cg_{\C^n}$, and $\norm{ \Phi_x^*g}_{C^{k,\alpha}(B, g_{\C^n}) } \leq C$ for all $x\in A$, and such that for all $y\in M$ there exists $x\in A$ with $y\in \Phi_x(B)$ and $\text{dist}_g (y, \partial \Phi_x(B)) \geq 1/C$.
 \end{Def}
 
 \begin{lem}\label{quasiatlas}
 Our metric ansatz (\ref{smoothingexample2scaled}) has a $C^{k, \alpha}$ quasi-atlas for any $k$.
 \end{lem}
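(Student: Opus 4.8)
The plan is to exhibit the quasi-atlas charts by hand, as rescaled affine maps of $\C^3$ built from the unitary frame adapted to the splitting of $T\C^3$ into the base direction of $f$ and the two fibre directions. First I would clear away the compact region: on $\{H\le R_0\}$ for any fixed $R_0$ the potential in (\ref{smoothingexample2scaled}) is smooth with $\sqrt{-1}\ddbar\phi>0$ (each of the three nested radicals has argument bounded below by $1$), so by compactness suitably rescaled standard holomorphic coordinates of a fixed small size are uniformly controlled charts there, and I may assume $R:=H(x)\gg 1$. Next I would record the eigenstructure of $\omega$ at such a point from (\ref{asymptoticmetricleadingorder}). Writing $\hat{\bar x}=\bar x/|x|$ for the unit normal to the fibre $X_{f(x)}$, and $\hat u$ for the unit vector along the orthogonal projection of $x$ onto $(\bar x)^{\perp}=\ker(df)_x$ (the ``radial-in-fibre'' direction), one checks that in the unitary frame $(\hat{\bar x},\hat u,\hat u^{\perp})$ of $\C^3$ the form $\omega$ is block diagonal up to errors of relative size $O(H^{-\delta})$ for some $\delta>0$, with eigenvalue $\sim H$ along $\hat{\bar x}$ and $\sim H^{-1/2}$ along both fibre directions; crucially this is uniform over the entire range $0\le\eta\le H^2$ (recall $\eta=|f|^2\le H^2$ everywhere on $\C^3$ by the triangle inequality), because $H\le H+a\lesssim H$ and the numerator $H^2+2aH+\eta$ of the radial fibre eigenvalue stays $\sim H^2$. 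Along the vanishing cycle $\{H=|f|\}$, where the projection of $x$ to $(\bar x)^{\perp}$ vanishes and the fibre metric is isotropic, I would replace $\hat u,\hat u^{\perp}$ by any orthonormal frame of $(\bar x)^{\perp}$.

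Taking $A=\C^3$, I would then set, for $R=H(x)\gg 1$,
\[
\Phi_x(\zeta_0,\zeta_1,\zeta_2)=x+R^{-1/2}\zeta_0\,\hat{\bar x}+R^{1/4}\zeta_1\,\hat u+R^{1/4}\zeta_2\,\hat u^{\perp},\qquad \zeta\in B(0,1)\subset\C^3,
\]
an affine holomorphic map, a diffeomorphism onto its image, extending smoothly to $\bar B$, with $\Phi_x(0)=x$. Four points must be checked. (i) Uniform ellipticity $\tfrac1C g_{\C^3}\le\Phi_x^{*}g\le C g_{\C^3}$ on $B(0,1)$: on $\Phi_x(B(0,1))$ the point moves by $\lesssim R^{-1/2}$ along $\hat{\bar x}$ and $\lesssim R^{1/4}$ along the fibre directions, and one verifies that $H$ stays $\sim R$ while $\eta$ and $a$ vary so that $H+a\sim R$ and $H^2+2aH+\eta\sim R^2$ throughout, so $\Phi_x^{*}g$ stays uniformly equivalent to the constant metric it equals at $0$. (ii) The $C^{k,\alpha}$ bound, where the tool is the observation of Section \ref{TheRiccicurvatureofthemetricansatz} that one gains a suppression factor $O(1/H)$ for every $H$-derivative and $O(1/a^{2})$ for every $\eta$-derivative: since $\hat u,\hat u^{\perp}\in\ker(df)_x$, the fibre derivatives $\partial_{\zeta_1},\partial_{\zeta_2}$ annihilate $f$, hence $\eta$, to first order at $x$ and produce only $O(R^{1/4})$ of $f$ nearby, so on the chart $\partial_{\zeta_j}\eta=O(R^{3/2})$ and $\partial_{\zeta_j}H=O(R^{3/4})$ at worst; feeding these into the explicit metric formula (\ref{symmetricansatz}) specialised to (\ref{smoothingexample2scaled}), every coefficient of $\Phi_x^{*}\omega$ comes out $O(1)$ with all $\zeta$-derivatives $O(R^{-1/4})$, giving uniform $C^{k,\alpha}$-norms (and in particular $\Phi_x^{*}g$ becomes nearly flat for large $R$). (iii) The injectivity-radius bound $\mathrm{inj}(\Phi_x^{*}g)\ge 1/C$ then follows from (i)--(ii) by Cheeger's lemma (lower volume bound from ellipticity, curvature bound from the $C^{k,\alpha}$ control), together with the compact-region case. (iv) The covering and distance-to-boundary conditions hold with $A=\C^3$: given $y$, use the chart $\Phi_y$, so $\mathrm{dist}_g(y,\partial\Phi_y(B))=\mathrm{dist}_{\Phi_y^{*}g}(0,\partial B)\ge 1/\sqrt C$ by (i).

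The hardest part, I expect, will be (ii) in the collapsed region $E_{\epsilon}$, i.e.\ where $\eta$ is comparable to $H^2$ and $a$ can be as large as $\sim H$ and the metric does not converge to a product: one must check that the full collection of error terms in $\sqrt{-1}\ddbar\phi$ beyond the leading asymptote (\ref{asymptoticmetricleadingorder}) --- in particular those obtained by differentiating the nested radical $a(\eta,H)=\sqrt{\eta+\sqrt{H+1}}$ --- stays uniformly $C^{k,\alpha}$-bounded after the anisotropic rescaling, and one must dispose cleanly of the frame ambiguity along the vanishing cycle. A shorter, though less self-contained, alternative would be to note from Section \ref{TheRiccicurvatureofthemetricansatz} that $g$ is complete with uniformly bounded curvature and covariant derivatives of curvature (indeed decaying) and injectivity radius bounded below, and then to invoke the standard construction of holomorphic charts of definite size on Kähler manifolds of bounded geometry, which furnishes a $C^{k,\alpha}$ quasi-atlas directly.
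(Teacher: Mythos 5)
There is a genuine gap in your main construction: the affine charts $\Phi_x$ fail the uniform ellipticity requirement $\frac{1}{C}g_{\C^3}\le \Phi_x^*g\le Cg_{\C^3}$ precisely in the region you flag as hardest. The problem is not the error terms of $\sqrt{-1}\ddbar\phi$ beyond the leading asymptote, nor the frame ambiguity on the vanishing cycle; it is that $f$ is quadratic, so it is not well approximated by its linearisation at $x$ over a box of Euclidean size $H^{1/4}$ in the fibre directions. Concretely, with $\hat u$ the radial-in-fibre unit vector one computes $\sum\hat u_i^2=-f/H$, so along the straight segment $t\mapsto x+t\hat u$ one has $f(x+t\hat u)-f(x)=-\frac{f}{H}t^2$ and hence $df_{x+t\hat u}(\hat u)=-2tf/H$. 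At the edge of your chart, $t\sim R^{1/4}$, the base term of the metric therefore contributes
\[
\Phi_x^*g(\partial_{\zeta_1},\partial_{\zeta_1})\ \gtrsim\ R^{1/2}\cdot\tfrac{1}{2}\bigl|df(\hat u)\bigr|^2\ \sim\ R^{1/2}\cdot R^{1/2}\,\frac{|f|^2}{H^2}\ =\ \frac{\eta}{H},
\]
which is unbounded whenever $\eta\gg H$ and is of order $\epsilon^2 R$ throughout $E_\epsilon$. Your bookkeeping in step (ii) only tracks the variation of the scalar coefficients $Q_i(H,\eta)$ via the suppression factors $O(1/H)$ and $O(1/a^2)$; it misses the variation of the coframe $df$ itself, whose component along $d\zeta_1,d\zeta_2$ grows from $0$ at the centre to $O(|f|R^{1/2}/H)$ at the boundary of the chart. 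The block-diagonal eigenstructure you record at $x$ simply does not persist over the chart. This is exactly why the paper's proof takes $z_1'=f-f(x)$ as a genuinely nonlinear holomorphic coordinate (legitimate since $|x_1|=\max|x_i|$ forces $\partial f/\partial z_1=2z_1\neq 0$), together with the rescaled affine coordinates $z_j'=H(x)^{-1/4}(z_j-x_j)$, $j=2,3$: the chart is then curved along the fibration and the term $\frac{1}{2}df\wedge d\bar f$ becomes exactly $\frac12 dz_1'\wedge d\bar z_1'$ with constant coefficient. Your construction does work where $\eta\lesssim H$, and could be repaired by replacing the $\hat{\bar x}$-coordinate with $f-f(x)$, at which point it essentially becomes the paper's proof.

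Your fallback alternative is also not available as stated: Section \ref{TheRiccicurvatureofthemetricansatz} only establishes the decay of the \emph{Ricci} tensor, $|\mathrm{Ric}|\le \frac{C}{aH}g$, and says nothing about the full curvature tensor, its covariant derivatives, or a lower bound on the injectivity radius; establishing bounded geometry for this anisotropically collapsing ansatz is essentially equivalent to building the charts in the first place, so invoking it here is circular.
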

 
 \begin{proof}
 It is enough to consider $H>>1$. For any such point $x=(x_1,x_2,x_3)\in \C^3$, without loss of generality $|x_1|=\max |x_i|$. Then we take the local coordinates $z_1'= f-f(x), z_2'= \frac{1}{H(x)^{1/4}}   (z_2-x_2), z_3'=\frac{1}{H(x)^{1/4}}  (z_3-x_3) $. Using the asymptotic formulae for the metric $g$, we see that in a coordinate ball the metric $g$ is uniformly equivalent to the Euclidean metric. In fact the uniform equivalence holds up to distance scale $O(H^{1/4})$. The higher order behaviour is clear.
 \end{proof}

\subsection{The asymptotic Laplacian}

It is important in the study of the Monge-Amp\`ere equation to have a well controlled linear theory of the Laplace operator. We want a formula for the main terms in the Laplacian of a general function $b(H,\eta)$ depending only on $H,\eta$. One quick method is to take the first variation of our formula for the volume form (\ref{symmetricansatz}). We record the result of the computation.

\begin{lem}
For the metric (\ref{smoothingexample2scaled}),
the formula for the Laplacian is 
\begin{equation}\label{Laplacianformula}
\begin{split}
(\sqrt{-1}\ddbar{b}) (\sqrt{-1}\ddbar{\phi}   )^2=\widetilde{vol_E}\{
&( \frac{3H+a}{ \sqrt{H+a}   } +O(\frac{1}{a} )              ) b_H  
+ (\frac{2(H^2-a^2)}{\sqrt{H+a}  } +O(\frac{H}{a} )   ) b_{HH} \\
+& (  O(\frac{\eta}{a}   )          ) b_{H\eta} 
+ (1+O (\frac{1}{ aH^{1/2}})  ) (\eta b_{\eta\eta}+b_\eta)
\}.
\end{split}
\end{equation}
\end{lem}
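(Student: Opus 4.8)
The plan is to obtain (\ref{Laplacianformula}) as the first variation of the volume-form identity (\ref{symmetricansatz}). Since $\sqrt{-1}\ddbar$ is $\R$-linear, for any function $b$
\[
\frac{d}{d\epsilon}\Big|_{\epsilon=0}\big(\sqrt{-1}\ddbar(\phi+\epsilon b)\big)^3 = 3\,(\sqrt{-1}\ddbar b)\wedge(\sqrt{-1}\ddbar\phi)^2 ,
\]
so it suffices to differentiate the right-hand side of (\ref{symmetricansatz}) in the direction $b$. The point is that when $b=b(H,\eta)$ depends only on $H,\eta$, the perturbed potential $\phi+\epsilon b$ is again of the symmetric form $(F+\epsilon b)(H,\eta)$, so (\ref{symmetricansatz}) with $n=3$ applies verbatim with $F$ replaced by $F+\epsilon b$; writing the bracket there as $\Phi(F_H,F_{HH},F_\eta,F_{\eta\eta},F_{H\eta};H,\eta)$ and using $(\sqrt{-1}\ddbar\phi)^3=3!\,\widetilde{vol_E}\,\Phi$, one gets $(\sqrt{-1}\ddbar b)(\sqrt{-1}\ddbar\phi)^2 = 2\,\widetilde{vol_E}\,D\Phi[b]$.

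Next I would carry out the (routine) differentiation: by the chain rule $D\Phi[b]$ is a linear combination of $b_H,b_{HH},b_\eta,b_{\eta\eta},b_{H\eta}$ whose coefficients are explicit polynomials in $H,\eta$ and the partials of $F$. Note that $F_\eta$ and $F_{\eta\eta}$ enter (\ref{symmetricansatz}) only through the combination $\eta F_{\eta\eta}+F_\eta$, which is precisely why $b_\eta$ and $b_{\eta\eta}$ only ever occur as $\eta b_{\eta\eta}+b_\eta$, multiplied by $HF_H^2+(H^2-\eta)F_HF_{HH}$. It then remains to substitute the explicit $F$ of (\ref{smoothingexample2scaled}), for which $F_H=\tfrac12(1+a_H)(H+a)^{-1/2}$, $F_{HH}\sim-\tfrac14(H+a)^{-3/2}$, $\eta F_{\eta\eta}+F_\eta=\tfrac12\big(1+O(\tfrac1{aH^{1/2}})\big)$, and $F_{H\eta}=O\big(\tfrac1a(H+a)^{-3/2}\big)$, together with the algebraic identity $a^2-\eta=\sqrt{H+1}$ (so $\eta$ and $a^2$ may be freely interchanged up to $O(H^{1/2})$), and collapse each coefficient to its leading term: for instance the $b_H$ coefficient equals $2\big(2HF_H+(H^2-\eta)F_{HH}\big)$ plus lower-order pieces, which simplifies to $(3H+a)(H+a)^{-1/2}+O(\tfrac1a)$, and the coefficient of $\eta b_{\eta\eta}+b_\eta$ to $1+O(\tfrac1{aH^{1/2}})$.

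The differentiation and substitution are mechanical; the real work is the error control — checking that every discarded term is genuinely of the advertised size $O(\tfrac1a)$, $O(\tfrac Ha)$, $O(\tfrac\eta a)$ or $O(\tfrac1{aH^{1/2}})$ uniformly over all of $\{H\gg1\}$. This is delicate because the ratio of $H$ to $a$ is not controlled: one must separately handle the regime $a\lesssim H$ (which includes the worst region $E_\epsilon$, where $a$ is as small as $\sim H^{1/4}$) and the regime $a\gtrsim H$ (large $\eta=|f|^2$), since a correction that comfortably sits inside, say, $O(H/a)$ in one regime need not obviously do so in the other. Organising the corrections so that they land in the stated error classes in both regimes — repeatedly trading $\eta$ for $a^2-\sqrt{H+1}$ and using bounds like $F_H^2H=O(H/a)$, $\eta F_H^2=O(\eta/a)$ — is, I expect, the main obstacle; once that bookkeeping is in place, the formula (\ref{Laplacianformula}) follows.
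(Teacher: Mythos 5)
Your proposal is correct and is exactly the paper's argument: the lemma is obtained as the first variation of the volume-form identity (\ref{symmetricansatz}) in the direction $b$, which is all the paper itself does before recording the result. (One bookkeeping caveat: with your normalisation $(\sqrt{-1}\ddbar b)(\sqrt{-1}\ddbar\phi)^2=2\,\widetilde{vol_E}\,D\Phi[b]$, the $b_H$ coefficient inside $D\Phi$ is $2\bigl(2HF_H+(H^2-\eta)F_{HH}\bigr)+\cdots\approx\tfrac{3H+a}{2\sqrt{H+a}}$, and the advertised $\tfrac{3H+a}{\sqrt{H+a}}$ only emerges after the outer factor of $2$ — your sentence conflates the two, though the final coefficients you quote are the correct ones.)
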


The interesting feature of this expression is its inhomogeneity. Think about the region $E_\epsilon$, where $|f|$ and $H$ are comparable. If we take a homogeneous expression $b$, such as $(|f|+H)^\alpha$, then each $H$ differentiation brings down the degree by 2, and each $\eta$ differentiation brings down the degree by 4. Then purely on the ground of dimensional analysis, only the terms with $b_H$ and $b_{HH}$ dominate. 
There is an additional subtlety that $H^2-a^2$ can have zeros. If we ignore this issue, then the insight we gain from this formula is that if we just want to invert the Laplacian approximately in the region $E_\epsilon \cap \{H>>1 \}$, then we only need to solve a second order ODE rather than a PDE. The fact that the $\eta$ derivative is suppressed, can be interpreted as saying the fibres behave as if they were independent in the region $E_\epsilon$. This phenomenon is typical in adiabatic limit problems, where the metric ansatz ultimately comes from.

One explanation for this phenomenon is that in the intrinsic geometry of the metric $g$, one should trade $|f|$ for $H^{1/4}$. With this new homogeneity convention, in the region where $|f|\sim H^{1/4}$, the quantity $H^{1/2 }b_H$ should be comparable to $b_\eta$, so the Laplacian genuinely depends on two variables in that region. The nonclassical nature of the Laplacian in the region $E_\epsilon$ testifies to the singularity of the tangent cone, which breaks down most of the intuitions based on Euclidean geometry.

\section{Calabi-Yau metric}

We aim to improve the approximate solution into a genuine Calabi-Yau metric. Our strategy to achieve this relies on the following existence result for the Monge-Amp\`ere equation on noncompact manifolds, taken from Hein's PhD thesis \cite{Hein2}, which builds on previous work of Tian and Yau \cite{TianYau}. 

\begin{thm}
(\cite{Hein2}, proposition 4.1) Let $(M, \omega)$ be a complete noncompact K\"ahler manifold with a $C^{3,\alpha}$ atlas, which satisfies the condition $SOB(\beta)$, where $\beta>2$.
Let $f\in C^{2,\alpha}(M)$ satisfy $|f|\leq Cr^{-\mu}$ on $\{ r>1 \}$ for some $\mu>2$. Then there exists $\bar{\alpha} \in (0,\alpha]  $ and $u\in C^{4,\bar{\alpha} }(M)$, such that $(\omega+i\ddbar{u})^n=e^f \omega^n$. If in addition $f\in C^{k,\bar{\alpha} }(M)$ for some $k\geq 3$, then all such solutions $u$ belong to $C^{k+2, \bar{\alpha}}_{\text{loc} } (M)$ with estimates. 
\end{thm}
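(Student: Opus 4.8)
The plan is to establish this via the continuity method, solving the family
\[
(\omega + \sqrt{-1}\ddbar u_t)^n = e^{t f}\,\omega^n, \qquad t\in[0,1],
\]
in a scale of weighted H\"older spaces adapted to the $SOB(\beta)$ geometry, and to show that the set $S\subset[0,1]$ of parameters for which a solution exists is open, closed, and nonempty (it contains $t=0$, with $u_0\equiv 0$). The weighted spaces should measure functions by their rate of decay in $r=\mathrm{dist}(x_0,\cdot)$, with local H\"older norms computed in the charts of the $C^{3,\alpha}$ atlas so that all local elliptic estimates hold with uniform constants.

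For openness, at a solution $u_t$ the linearised operator is the Laplacian $\Lap_{\omega_t}$ of the metric $\omega_t=\omega+\sqrt{-1}\ddbar u_t$, and one needs this to be an isomorphism between the appropriate weighted H\"older spaces. The key input here is Hein's linear theory for $\Lap$ on $SOB(\beta)$ manifolds with $\beta>2$: the weighted Sobolev inequality (a consequence of the polynomial volume bound, the connectedness of annuli, and the $\mathrm{Ricci}\geq -Cr^{-2}$ bound) yields surjectivity of $\Lap$ onto functions decaying faster than $r^{-2}$, together with a bounded right inverse landing in functions decaying like $r^{2-\mu}$. Combined with the uniform interior Schauder estimates from the atlas, this makes $\Lap_{\omega_t}$ an isomorphism, and the implicit function theorem gives openness.

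Closedness is the substance, and reduces to $t$-uniform a priori estimates. First the $C^0$ estimate: writing the equation in the form $\Lap_\omega u_t \geq (\text{controlled})$ and running Moser iteration against the weighted Sobolev inequality, one gets $\|u_t\|_{C^0}\leq C$ with a definite decay rate at infinity; the hypotheses $\beta>2$ and $\mu>2$ are exactly what make the relevant weighted norm of $f$ finite and the iteration close. Next the second-order estimate: Yau's $C^2$ computation, in which the curvature term is dominated using $\mathrm{Ric}_\omega \geq -Cr^{-2}$ (absorbing the weight $r^{-2}$ against the decay already obtained), yields uniform ellipticity $C^{-1}\omega \leq \omega_t \leq C\omega$. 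Given uniform ellipticity one applies the complex Evans--Krylov estimate in the charts of the atlas for interior $C^{2,\bar\alpha}$ bounds with uniform constants, and then a Schauder bootstrap using the $C^{k,\alpha}$ atlas produces all higher-order bounds; this last bootstrap is also what gives the stated $C^{k+2,\bar\alpha}_{\mathrm{loc}}$ regularity when $f\in C^{k,\bar\alpha}$.

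The main obstacle is the global $C^0$ estimate: on a noncompact manifold of merely polynomial volume growth there is no maximum-principle shortcut, and one must genuinely exploit the interplay of the volume-growth exponent $\beta$, the decay rate $\mu$ of the right-hand side, and the weighted Sobolev inequality. Making the Moser iteration close with the correct weights, and simultaneously producing a solution that decays so that the continuity path stays within the chosen function space, is the delicate point, and is precisely where Hein's weighted-analysis machinery (building on Tian--Yau) is indispensable.
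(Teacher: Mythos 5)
You should first note that the paper does not prove this theorem itself: it is quoted from Hein's thesis, and the only indication of proof given is the remark that Hein solves the regularized ("viscosity") family $(\omega+\sqrt{-1}\ddbar{u_\epsilon})^n=e^{f+\epsilon u_\epsilon}\omega^n$, derives an $L^\infty$ bound uniform in small $\epsilon>0$ by weighted Moser iteration, and passes to a subsequential limit as $\epsilon\to 0$ (whence the phrasing ``all such solutions'' --- uniqueness is not claimed). Your continuity method in $t$ is therefore a genuinely different route, and it has a real gap at the openness step. To apply the implicit function theorem you assert that $\Lap_{\omega_t}$ is an isomorphism of weighted H\"older spaces, with a bounded right inverse sending $O(r^{-\mu})$ data to $O(r^{2-\mu})$ solutions. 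But $SOB(\beta)$ supplies only a volume growth bound, connectedness of annuli, and $\mathrm{Ric}\geq -Cr^{-2}$; it imposes no structure at infinity (no asymptotic cone, no end decomposition), and a weighted Fredholm or isomorphism theorem for $\Lap$ is not available at this level of generality. What the weighted Sobolev inequality actually delivers is an a priori estimate --- decay of a solution once one exists --- not surjectivity of $\Lap$ in the weighted H\"older category. This is exactly the difficulty the $\epsilon u_\epsilon$ term is designed to circumvent: the zeroth-order term makes each regularized equation solvable (e.g.\ by exhaustion over compact domains with the ordinary maximum principle) without any global linear isomorphism, and the whole analytic burden is shifted onto the uniform $C^0$ estimate.

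A secondary point: the theorem as stated only produces $u\in C^{4,\bar\alpha}(M)$, i.e.\ a bounded solution; the decay $O(r^{2-\mu+\epsilon})$ is a separate statement, extracted later in the paper from Hein's proposition 3.6(ii). By requiring the continuity path to remain in a space of decaying functions you are attempting to prove a strictly stronger assertion than the one at hand, which makes both the openness and closedness steps harder than necessary. Your outline of the a priori estimates themselves --- Moser iteration against the weighted Sobolev inequality for $C^0$, Yau's second-order estimate absorbing the $-Cr^{-2}$ Ricci lower bound, then Evans--Krylov and Schauder bootstrapping in the quasi-atlas charts for the higher regularity and the $C^{k+2,\bar\alpha}_{\mathrm{loc}}$ statement --- is the right material and matches what Hein does for the uniform estimates; it is the existence mechanism that needs to be replaced by the regularized family and subsequential limit.
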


\begin{rmk}
The proof considers a family of viscosity equations of the shape
\[
(\omega+ i \ddbar{u_\epsilon})^n= e^{f+\epsilon u_\epsilon} \omega^n,
\]
and derives uniform $L^\infty$ estimate for all small $\epsilon>0$. The desired solution follows by taking a subsequential limit, which is not a priori known to be unique.  

The condition $SOB(\beta)$ is chiefly used in the application of certain weighted Sobolev inequalities. The condition $\beta>2$ reflects a basic distinction between parabolic and non-parabolic manifolds. The reason to impose $\mu>2$ is that the method typically only allows us to construct solutions with decaying potential, and if the solution decays with some power law $O(r^{2-\mu})$, then standard potential theory on Euclidean spaces makes us expect the Laplacian to decay with order $O(r^{-\mu})$. As a side remark, since the distance function in our ansatz involves two competing factors $|f|$ and $H^{1/4}$, the intuition from standard Euclidean behaviour is not always very effective.
\end{rmk}

\subsection{The error terms for the volume of the ansatz (\ref{smoothingexample2scaled})}

We wish to understand accurately the error terms in our ansatz (\ref{smoothingexample2scaled}). To aid computation, let us introduce the quantities
\[
Q_1=F_H^3+F_H^2 H F_{HH}, \quad Q_2=\eta F_{\eta\eta}+F_\eta,
\]
\[
Q_3=HF_H^2+(H^2-\eta) F_H F_{HH}, \quad Q_4=F_{H\eta} F_H^2\eta-F_{H\eta}^2 \eta F_H (H^2-\eta).
\]
Our general formula for the volume form reads
\[
\frac{ (\sqrt{-1}\ddbar{\phi})^3   }{ 6 \widetilde{vol_E}  }=Q_1+4Q_2Q_3+4Q_4.
\]
In the light of Hein's theorem, we shall keep only the terms with slower than quadratic decay. Recall the distance to the origin is of the same order as the function $a$. We state the result of a long computation:
\[
Q_1=-\frac{H}{16 (H+a)^{5/2}  }+\frac{1}{8 (H+a)^{3/2} }+ O(  \frac{1}{H^2 a} ),
\]
\[
Q_2=\frac{1}{2} + \frac{1}{8 a\sqrt{H+a}  }-\frac{1}{16 (H+a)^{3/2}   }+O(\frac{1}{a^3}),
\]
\[
Q_3=\frac{1}{8}+ \frac{ H^{1/2} } { 32a(H+a)   } -\frac{ H^{1/2} }{ 8( H+a  )^2  } + \frac{3}{ 32(H+a) \sqrt{H}  } -\frac{H-a} {32 aH^{3/2}} +O(\frac{1}{a^3}),
\]
\[
Q_4=-\frac{1}{32} \frac{a}{(H+a)^{5/2}  }-\frac{ H-a} { 128 (H+a)^{5/2}   }+O(\frac{1}{a^3}).
\]

\begin{rmk}
In the region where $H$ is comparable to $a^4$,  all error terms are of order at most $O(\frac{1}{a^3})$, so the approximation is already good enough. But in the region $E_\epsilon$, we do need to keep many error terms. As observed earlier, the worst error term has order $O(\frac{1}{aH^{1/2}})$, so its square is of order $O(\frac{1}{a^2 H})$, which indicates that nonlinear effects are asymptotically unimportant.
\end{rmk}

From these computations, we see that
\begin{lem} 
The volume form of the metric given by the ansatz (\ref{smoothingexample2scaled}) is
\begin{equation}\label{volumeerrorterms}
\begin{split}
\frac{ (\sqrt{-1}\ddbar{\phi})^3   }{ 6 \widetilde{vol_E}  }=\frac{1}{4} \{  
1+ \frac{1}{4a(H+a)^{1/2} }+ \frac{H^{1/2}} { 4a (H+a) }
-\frac{H^{1/2} }{ (H+a)^2   } \\
+\frac{3}{ 4(H+a) \sqrt{H}     }
-\frac{H-a}{   4aH^{3/2}        }
\}
+O(\frac{1}{a^3} ).
\end{split}
\end{equation}
\end{lem}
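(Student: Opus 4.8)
The statement is essentially a bookkeeping consolidation: we must combine the four asymptotic expansions for $Q_1$, $Q_2$, $Q_3$, $Q_4$ (each already computed modulo $O(a^{-3})$) into the single identity
\[
\frac{(\sqrt{-1}\ddbar{\phi})^3}{6\widetilde{vol_E}}=Q_1+4Q_2Q_3+4Q_4,
\]
and then discard every term that decays at least as fast as $O(a^{-3})$, keeping the five displayed correction terms.

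The plan is to proceed as follows. First I would substitute the expansion of $Q_2=\tfrac12+\varepsilon_2$ (with $\varepsilon_2=O(a^{-1}(H+a)^{-1/2})$) and $Q_3=\tfrac18+\varepsilon_3$ into the product $4Q_2Q_3=\tfrac12 Q_3+4\varepsilon_2 Q_3$; the cross term $4\varepsilon_2\varepsilon_3$ is a product of two quantities each $O(a^{-1}H^{-1/2})$ at worst, hence $O(a^{-2}H^{-1})$, which in the region where $a$ and $H$ are comparable is $O(a^{-3})$ and elsewhere even smaller, so it goes into the error (this is exactly the remark just before the lemma about nonlinear effects being asymptotically unimportant). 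Thus to the order we care about,
\[
4Q_2Q_3=\tfrac12\Bigl(\tfrac18+\varepsilon_3\Bigr)+4\varepsilon_2\cdot\tfrac18+O(a^{-3})=\tfrac{1}{16}+\tfrac12\varepsilon_3+\tfrac12\varepsilon_2+O(a^{-3}).
\]
Next I would add $Q_1$ and $4Q_4$. The constant terms are $Q_1$ contributing nothing at order $1$, $4Q_2Q_3$ contributing $\tfrac{1}{16}$, and $4Q_4$ contributing nothing at order $1$; wait — more carefully, the leading constant must come out to $\tfrac14$, so I would first verify that $4Q_2Q_3$'s constant $\tfrac{4\cdot\frac12\cdot\frac18}{1}=\tfrac14$ is the whole leading term, and that the leading pieces of $Q_1$ (order $(H+a)^{-3/2}$) and $4Q_4$ (order $a(H+a)^{-5/2}$ and $(H+a)^{-3/2}$) are genuinely subleading, which is visible term-by-term. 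Then I would collect the subleading terms: from $\tfrac12\varepsilon_2$ comes $\tfrac{1}{16a\sqrt{H+a}}-\tfrac{1}{32(H+a)^{3/2}}$; from $\tfrac12\varepsilon_3$ comes $\tfrac{H^{1/2}}{64a(H+a)}-\tfrac{H^{1/2}}{16(H+a)^2}+\tfrac{3}{64(H+a)\sqrt H}-\tfrac{H-a}{64aH^{3/2}}$; from $Q_1$ comes $-\tfrac{H}{16(H+a)^{5/2}}+\tfrac{1}{8(H+a)^{3/2}}$; and from $4Q_4$ comes $-\tfrac{a}{8(H+a)^{5/2}}-\tfrac{H-a}{32(H+a)^{5/2}}$. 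Finally I would factor out $\tfrac14$ and check that the bracketed expression matches the one in (\ref{volumeerrorterms}); the $(H+a)^{-3/2}$ and $(H+a)^{-5/2}$ pieces from $Q_1$ and $Q_4$ must reorganize — note $-\tfrac{H}{16(H+a)^{5/2}}-\tfrac{a}{8(H+a)^{5/2}}-\tfrac{H-a}{32(H+a)^{5/2}}+\tfrac{1}{8(H+a)^{3/2}}$ should simplify, using $\tfrac{1}{(H+a)^{3/2}}=\tfrac{H+a}{(H+a)^{5/2}}$, to something absorbable, and in fact the whole combination is $O(a^{-3})$ away from $0$ in the relevant regimes or else combines with the $\tfrac12\varepsilon_3$ terms; I would track the powers of $H$ versus $H+a$ to confirm the claimed five surviving terms.

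The main obstacle is not conceptual but the careful matching of the residual $(H+a)^{-3/2}$- and $(H+a)^{-5/2}$-type terms coming from $Q_1$ and $Q_4$ against the displayed answer, since $(\ref{volumeerrorterms})$ is written purely in terms of $a$, $H$, and $H+a$ in a non-obvious grouping, and several of these terms are individually $O(a^{-3})$ only after using $H\lesssim a^4$ or the comparability of $H$ and $a$ in $E_\epsilon$ — so one must argue regime by regime (the region $H\sim a^4$ versus the region $E_\epsilon$ where $H\sim a$) that the discarded combinations genuinely fall below $O(a^{-3})$, rather than discarding them termwise. Once that regime analysis is in place the rest is direct algebraic substitution, so the lemma follows.
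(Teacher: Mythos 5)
Your overall plan — substitute the stated expansions of $Q_1,\dots,Q_4$ into $Q_1+4Q_2Q_3+4Q_4$, discard the quadratic cross term, and match coefficients — is exactly what the paper does (it records no more than the $Q_i$ expansions and asserts the result). But your execution contains an arithmetic error that would produce the wrong answer. Writing $Q_2=\tfrac12+\varepsilon_2$, $Q_3=\tfrac18+\varepsilon_3$, one has
\[
4Q_2Q_3=\tfrac14+\tfrac12\varepsilon_2+2\varepsilon_3+4\varepsilon_2\varepsilon_3,
\]
not $\tfrac1{16}+\tfrac12\varepsilon_2+\tfrac12\varepsilon_3+\dots$ as in your display ($4Q_2Q_3=2Q_3+4\varepsilon_2Q_3$, not $\tfrac12Q_3+4\varepsilon_2Q_3$). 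You partially self-correct the constant to $\tfrac14$, but you keep the coefficient $\tfrac12$ on $\varepsilon_3$, so the four terms you list as "coming from $\tfrac12\varepsilon_3$" are each a factor of $4$ too small; with the correct coefficient $2\varepsilon_3$ they reproduce precisely the four $\varepsilon_3$-type terms of (\ref{volumeerrorterms}) after the overall $\tfrac14$ is factored out.

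The second, more substantive issue is your treatment of the leftover terms. After the five displayed terms are extracted, what remains is
\[
-\tfrac{1}{32(H+a)^{3/2}}+\tfrac{1}{8(H+a)^{3/2}}-\tfrac{H}{16(H+a)^{5/2}}-\tfrac{a}{8(H+a)^{5/2}}-\tfrac{H-a}{32(H+a)^{5/2}}
=\tfrac{1}{32(H+a)^{5/2}}\bigl[-(H+a)+4(H+a)-2H-4a-(H-a)\bigr]=0,
\]
an exact cancellation. This cannot be replaced by your proposed fallback of showing "regime by regime" that the combination falls below $O(a^{-3})$: in $E_\epsilon$ one has $H\sim a$, so a single surviving term like $\tfrac{1}{8(H+a)^{3/2}}\sim a^{-3/2}$ would dwarf the stated error $O(a^{-3})$ and indeed dwarf the five retained correction terms. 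The lemma is true only because of the identity above, so verifying it is not optional bookkeeping but the crux of the step. (Your discarding of $4\varepsilon_2\varepsilon_3$ is fine: it is $O(a^{-2}H^{-1})$, and since $\eta\le H^2$ forces $a\lesssim H$ for $H\ge1$, this is $O(a^{-3})$ everywhere.)
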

This can be viewed as an improved version of (\ref{asymptoticvolume1}).

\subsection{Approximately inverting the Laplacian}

The problem of applying Hein's result to construct a genuine Calabi-Yau metric is that the main error terms have slower than quadratic decay. We shall attempt to remove them by explicitly inverting the Laplacian up to an admissible amount of error. The result is
\begin{lem}\label{correctmainerrors}
	There exists a function $b=\frac{1}{a} \tilde{b} (\frac{H}{a})$ defined for $H>>1$, where $\tilde{b}$ is smooth with growth control \[\tilde{b}(x)=O(\log x  ), \quad  \frac{d^k\tilde{b} }{dx^k}=O(x^{-k}),\] such that
	\[
	\begin{split}
	(\sqrt{-1}\ddbar{b}) (\sqrt{-1}\ddbar{\phi}   )^2
	= \frac{1}{2} \widetilde{vol_E}   \{  
	\frac{1}{4a(H+a)^{1/2} }+ \frac{H^{1/2}} { 4a (H+a) }
	-\frac{H^{1/2} }{ (H+a)^2   } \\
	+\frac{3}{ 4(H+a) \sqrt{H}     }
	-\frac{H-a}{   4aH^{3/2}        }
	\}
	+O(\frac{ \log (2H/a)   }{a^3} ) \}.
	\end{split}
	\] 
\end{lem}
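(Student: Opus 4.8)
The plan is to solve the ODE that results from plugging the ansatz $b=\frac{1}{a}\tilde b(H/a)$ into the asymptotic Laplacian formula (\ref{Laplacianformula}). First I would record the derivatives of $b$ in terms of $\tilde b$ and its derivatives: writing $x=H/a$, we have $b_H=\tilde b'(x)/a^{2}$, $b_{HH}=\tilde b''(x)/a^{3}$, while the $\eta$-derivatives bring in factors $a'=a_\eta$ and $a''=a_{\eta\eta}$, each of which carries a suppression $O(1/a)$ or better, so that $b_\eta$ and $\eta b_{\eta\eta}$ are of order $O(a^{-2})$, and $b_{H\eta}=O(a^{-3})$. Feeding these into (\ref{Laplacianformula}) and using that in the relevant regime $H+a$, $H$, and $a(1+x)$ are mutually comparable up to the ranges where $x$ is bounded or large, the dominant contribution is
\begin{equation*}
(\sqrt{-1}\ddbar b)(\sqrt{-1}\ddbar\phi)^2 = \widetilde{vol_E}\,\Bigl\{ \frac{3H+a}{\sqrt{H+a}}\,\frac{\tilde b'}{a^{2}} + \frac{2(H^2-a^2)}{\sqrt{H+a}}\,\frac{\tilde b''}{a^{3}} \Bigr\} + (\text{lower order}).
\end{equation*}
Setting $H=ax$ and factoring out the overall scale, the leading operator becomes, up to a positive factor of the form $a^{-5/2}$ times a function of $x$, the second-order ODE
\begin{equation*}
2x(x-1)(x+1)^{-1/2}\,\tilde b''(x) + (3x+1)(x+1)^{-1/2}\,\tilde b'(x) = g(x),
\end{equation*}
where $g(x)$ is the function of $x$ obtained by writing the right-hand side of the target identity (the five explicit error terms divided by $\widetilde{vol_E}$, times $a^{5/2}$, so that everything depends on $x$ alone). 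The point is that all five error terms are homogeneous of the same weight once one declares $H\sim a$, so they assemble into a single $g(x)$; this is exactly the ``reduces to an ODE'' phenomenon flagged after (\ref{Laplacianformula}).

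Next I would solve this ODE explicitly by the integrating-factor method. The homogeneous equation $2x(x-1)\tilde b'' + (3x+1)\tilde b' = 0$ (after clearing the common $(x+1)^{-1/2}$) has $\tilde b'$ satisfying a first-order linear equation, so $\tilde b'(x) = C\exp\!\bigl(-\int \frac{3t+1}{2t(t-1)}\,dt\bigr)$, which by partial fractions $\frac{3t+1}{2t(t-1)} = \frac{-1/2}{t} + \frac{2}{t-1}$ gives $\tilde b'_{\hom}(x) = C\, x^{1/2}(x-1)^{-2}$; then variation of parameters produces the particular solution as a double integral of $g$. The apparent singularity at $x=1$ (where $H^2-a^2$ vanishes, the subtlety already noted in the text) must be checked to be removable: because the source $g$ vanishes to the appropriate order at $x=1$ — the combination of the five terms is designed so that the total volume error is smooth across the vanishing-cycle locus — the particular solution extends smoothly through $x=1$. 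For the growth control: as $x\to\infty$ the ODE behaves like $2x^2\tilde b'' + 3x\tilde b' \sim g(x)$ with $g(x)=O(1/x)$ (the $O(1/a^3)$ terms in the original, rescaled, decay one power faster in $x$ than the leading $1/(aH^{1/2})$ term which is $O(x^{-1/2})$... I would track this carefully), and a dominant-balance/Frobenius analysis at infinity gives $\tilde b = O(\log x)$ with $\tilde b^{(k)} = O(x^{-k})$, while at the inner end $x\to$ (bounded above $0$) smoothness is automatic. Finally, I would verify that substituting this $\tilde b$ back reproduces the five stated terms exactly and that every term I dropped along the way — the $b_{H\eta}$ term, the $O(\tfrac1a)$, $O(\tfrac{H}{a})$ corrections in the coefficients of (\ref{Laplacianformula}), and the contributions of $a'$, $a''$ — is genuinely $O(a^{-3}\log(2H/a))$; the logarithm enters precisely because $\tilde b$ itself grows like $\log x$ and gets multiplied by the coefficient errors.

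The main obstacle I expect is not the formal solution of the ODE but the bookkeeping needed to justify that the error truly has the claimed size $O(a^{-3}\log(2H/a))$ \emph{uniformly} over the whole region $H\gg 1$, including both the ``generic'' regime $H\sim a^4$ and the squashed regime $E_\epsilon$ where $H\sim a$. In these two regimes the relative sizes of $H$, $a$, and $x=H/a$ are wildly different, so the single-variable reduction is only approximate away from $E_\epsilon$, and one has to check that the neglected genuinely-two-variable terms (those involving $a_\eta$, $a_{\eta\eta}$, $b_{H\eta}$) are dominated by $O(a^{-3}\log(2H/a))$ in \emph{all} regimes, not just where the ODE picture is cleanest. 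A secondary technical point is controlling the particular solution and its derivatives near $x=1$, i.e.\ confirming the cancellation that makes $\tilde b$ smooth there despite the degenerate coefficient $x(x-1)$; this relies on the precise form of the five error terms in (\ref{volumeerrorterms}) rather than on any soft argument, so it is the step where the ``long computation'' behind those terms actually gets used.
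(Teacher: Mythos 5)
Your overall strategy --- reduce to a second-order ODE in the single variable $x=H/a$ using the homogeneity of both the asymptotic Laplacian and the five error terms, solve it, and then account for the fact that $a$ also depends on $H$ --- is exactly the paper's strategy (the paper treats the five source terms one at a time rather than summing them into a single $g(x)$, but by linearity that is immaterial). However, there are two concrete problems in your execution.

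First, an algebraic slip: the coefficient of $\tilde b''$ in your rescaled ODE should be $2(x^2-1)(x+1)^{-1/2}=2(x-1)(x+1)^{1/2}$, coming from $2(H^2-a^2)/\sqrt{H+a}$ with $H=ax$, not $2x(x-1)(x+1)^{-1/2}$. Consequently your partial-fraction computation is done on the wrong homogeneous equation, and the homogeneous solution is $\tilde b'_{\hom}=C(x-1)^{-1}(x+1)^{-1/2}$, not $C\,x^{1/2}(x-1)^{-2}$. The paper sidesteps integrating factors entirely by observing that the leading operator is already in divergence form,
\[
\frac{3x+y}{\sqrt{x+y}}\,h'+\frac{2(x^2-y^2)}{\sqrt{x+y}}\,h''
=\frac{d}{dx}\Bigl\{\,2(x-y)\sqrt{x+y}\,\frac{dh}{dx}\Bigr\},
\]
so each term is handled by two elementary integrations; you should verify this identity and use it, as it also cleans up your growth analysis at infinity (note $g(x)=O(x^{-1/2})$, not $O(1/x)$; the first integral is then $O(x^{1/2})$, whence $\tilde b'=O(x^{-1})$ and $\tilde b=O(\log x)$, which is where the logarithm actually comes from).

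Second, your treatment of the degeneracy at $x=1$ invokes the wrong mechanism. The source $g$ does \emph{not} vanish at $x=1$ (e.g.\ the first term contributes $\tfrac{1}{4\sqrt2}$ there), and no ``designed cancellation'' among the five terms is needed or present. Smoothness across $x=1$ is obtained by the free choice of the constant in the first integration: setting $2(x-1)\sqrt{x+1}\,\tilde b'=\int_1^x g$, the right-hand side vanishes to first order at $x=1$ because the lower limit is chosen there, so $\tilde b'$ is bounded and in fact smooth, for any smooth $g$. As written, your argument would leave you looking for a cancellation that does not exist; with the corrected homogeneous solution (simple pole at $x=1$ rather than a double pole) the choice of integration constant is all that is required. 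The remaining bookkeeping you describe --- checking that the $b_{H\eta}$, $a_\eta$, $a_{\eta\eta}$ contributions and the coefficient corrections in the Laplacian formula are $O(a^{-3}\log(2H/a))$ uniformly --- is indeed the content of the final step in the paper and is handled exactly as you anticipate.
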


We begin with examples of treating individual terms before describing the general algorithm.

\begin{eg}
We treat the error term $\frac{1}{4a(H+a)^{1/2}}$ in detail. The leading order terms for the Laplacian of a function $b(H, \eta)$ is
\[
(\sqrt{-1}\ddbar{b}) (\sqrt{-1}\ddbar{\phi}   )^2 \sim \widetilde{vol_E}\{
 \frac{3H+a}{ \sqrt{H+a}   }                b_H  
+ \frac{2(H^2-a^2)}{\sqrt{H+a}  } b_{HH} \}
\]
Now we solve a related ODE with variable $x$:
\[
\frac{3x+y}{ \sqrt{ x+y } } h'+ \frac{2(x^2-y^2)}{\sqrt{x+y}  }h''= \frac{1}{4 y(x+y)^{1/2}  }.
\]
We can rewrite the ODE as
\[
\frac{d}{dx} \{     2(x-y)\sqrt{x+y} \frac{dh}{dx}           \}=\frac{1}{4 y(x+y)^{1/2}  }.
\]
This is a direct consequence of the asymptotic Laplacian formula, from which it is clear that the ODE is reducible to elementary integration.
The first integration gives
\[
\frac{dh}{dx}= \frac{1}{4y(x-y)\sqrt{x+y}   }(  {\sqrt{ x+y   }} -\sqrt{2y}  ) ,
\]
where we choose the normalisation that for $x=y$ the derivative $\frac{dh}{dx}$ remains bounded. This is to ensure the smooth dependence of $h$ on $x$. Integrate further to get
\[
h(x)= \frac{1}{2y}  \log ( \frac{ \sqrt{x+y}+ \sqrt{2y}  }{ \sqrt{y} } ) .
\]
Here the normalisation is chosen to display homogeneity behaviour.

We then substitute $H$ for $x$ and $a$ for $y$. This gives an approximate solution
\[
b_1(H,\eta)=\frac{1}{2a  }\log ( \frac{ \sqrt{H+a}+ \sqrt{2a}  }{\sqrt{a} } ).
\]
By construction, the expression 
\[
\frac{3H+a}{ \sqrt{H+a}   }                b_{1H}  
+ \frac{2(H^2-a^2)}{\sqrt{H+a}  } b_{1HH} = \frac{1}{4a(H+a)^{1/2}} + O( \frac{\log (\frac{2H}{a})}{a^3}  )
\]
The error term $ O( \frac{\log (\frac{2H}{a})}{a^3}  )$ appears because $a$ also depends on $H$, but the relative error is quite small, so the situation is improved. Now we use the formula (\ref{Laplacianformula}) to see that in fact 
\[
(\sqrt{-1}\ddbar{b_1}) (\sqrt{-1}\ddbar{\phi}   )^2 =\widetilde{vol_E}\{ \frac{1}{4a(H+a)^{1/2}} + O(\frac{\log (2H/a)}{a^3}) 
 \}.
\]
This means we can invert the Laplacian on this error term approximately, as desired. 

\begin{rmk}
We comment that the ability to solve for a decaying potential on a forcing term with slower than quadratic decay is not what one might expect from Euclidean potential theory.	One explanation for this, is that our forcing term is $ O(\frac{1}{a^3}  )$ in the region where $H\sim |f|^4$, and the slow decay occurs only in a very small solid angle.
\end{rmk}

\end{eg}

\begin{eg}
As a second example, we treat the term $\frac{1}{4H^{3/2}}$. We first write down the associated ODE:
\[
\frac{d}{dx} \{     2(x-y)\sqrt{x+y} \frac{dh}{dx}           \}=
\frac{3x+y}{ \sqrt{ x+y } } h'+ \frac{2(x^2-y^2)}{\sqrt{x+y}  }h''= \frac{1}{4 x^{3/2}  }.
\]
The first integration gives
\[
 \frac{dh}{dx}= \frac{1}{ 4(x-y)\sqrt{x+y}   } (\frac{1}{y^{1/2}} -\frac{1}{x^{1/2}} ),
\]
where the normalisation is chosen to make $\frac{dh}{dx}$ bounded at $y=x$. The RHS clearly exhibits homogeneity features, so we can find an integral in the form
\[
h(x,y)= \frac{1}{y} \tilde{h} ( \frac{x}{y} ),
\]
where 
\[
 \frac{d \tilde{h} }{dx}= \frac{1}{ 4(x-1)\sqrt{x+1}   } (1 -\frac{1}{x^{1/2}} ).
\]
The detailed expression of the RHS is less important than its qualitative features, namely that it is a smooth expression in $x$, and its asymptotic growth at large $x$ is $O(\frac{1}{x^{3/2}})$. This implies $\tilde{h}(x)= O(\frac{1}{x^{1/2}})$. We then define
\[
b_2(H, \eta)= \frac{1}{a} \tilde{h}( \frac{H}{a}   ),
\]
so by construction
\[
\frac{3H+a}{ \sqrt{H+a}   }                b_{2H}  
+ \frac{2(H^2-a^2)}{\sqrt{H+a}  } b_{2HH} = \frac{1}{4H^{3/2}} + O( \frac{ 1 }{a^{5/2} H^{1/2} }  ).
\]
Using the Laplacian formula (\ref{Laplacianformula}), one can see further that 
\[
(\sqrt{-1}\ddbar{b_2}) (\sqrt{-1}\ddbar{\phi}   )^2 =\widetilde{vol_E}\{ \frac{1}{4H^{3/2}} + O(\frac{1}{a^{5/2}H^{1/2} }) 
\}.
\]
This means the singular term can be approximately cancelled in this example.
\end{eg}

The features of these examples mostly carry over to the other error terms in (\ref{volumeerrorterms}) with minor changes. We can write down with obvious modifications an ODE for each error term. Explicit integrals in most other cases are more difficult to compute. However, by the homogeneity of the ODE, we can always write 
\[
h(x,y)=\frac{1}{y} \tilde{h} (\frac{x}{y}   ),
\]
where $\tilde{h}$ solves the ODE with parameter $y=1$. The algorithm is that we perform the first integral with the requirement that $\frac{d\tilde{h}}{dx}$ stays bounded near $x=1$, integrate to find $h$, substitute $H$ for $x$ and $a$ for $y$. It is clear that in each case the result is a smooth function $b(H, \eta)$ for $H>>1$. 

It remains to control the behaviour of the function $\tilde{h}(x)$ for large $x$, in order to estimate all the errors. For the terms
\[
\frac{1}{4a(H+a)^{1/2} }, \quad \frac{H^{1/2}} { 4a (H+a) }, \quad -\frac{1}{   4aH^{1/2}        },
\]
the relevant $\tilde{h}$ has leading order growth (up to constant factors)
$
\tilde{h} \sim \log x, \quad \frac{d\tilde{h} }{dx} \sim x^{-1}.
$
For the terms 
\[
-\frac{H^{1/2} }{ (H+a)^2   }, \quad
\frac{3}{ 4(H+a) \sqrt{H}     }, \quad
\frac{1}{   4H^{3/2}        },
\]
$\tilde{h}$ has leading order growth (up to constant factors) 
$
\tilde{h} \sim x^{-{1/2}}, \quad \frac{d\tilde{h} }{dx} \sim x^{-{3/2}}.
$

It follows that in the first three cases, we can cancel the error terms up to $O(\frac{\log (H/a)}{a^3})$, and in the last three cases, we can cancel the error terms up to $O(\frac{1}{H^{1/2}a^{5/2}  }  )$. In summary, we have the lemma \ref{correctmainerrors}.

\begin{rmk}
It should be pointed out that this ODE method improves the error only in the bad region with $|f|>>H^{1/4}$, \ie the region collapsing to the singularity of the tangent cone. Such ODEs appear quite often in adiabatic limit problems, where one has a local fibration structure and one seeks an improved approximate solution by solving an ODE along the fibres. This is indeed what happens here.

In the region where $|f|$ is comparable to $H^{1/4}$, the Laplacian genuinely depends on both $H$ and $\eta$ derivatives, so ODE reduction is not possible in general, but the error there is already acceptably small.
\end{rmk}

\begin{rmk}
The order of $\sqrt{-1}\ddbar{b}$ compared to $\sqrt{-1}\ddbar{\phi}$ is $O( \frac{1}{aH^{1/2}} )+ O(\frac{\log (2H/a) }{a^3}   )$, so its square has order $O(\frac{1}{a^3})$. This again confirms the fact that the nonlinear effect is asymptotically weak.
\end{rmk}


\subsection{Perturbation into the Calabi-Yau metric}

We improve the metric ansatz to allow for the application of Hein's existence result. For this we define a metric 
\begin{equation}
\omega'=\sqrt{-1} \ddbar{ (\phi-b \chi )   }.
\end{equation}
where $b$ is defined in Lemma \ref{correctmainerrors}, $0\leq \chi\leq 1$ is a cutoff function, vanishes in the ball $\{ H\leq R^2  \}$, and equals to one outside $\{ H\leq 4R^2  \}$. If we choose the parameter $R$ large enough, we can ensure $\omega'$ is a genuine K\"ahler metric.

For $H>>1$, the volume form of this modified ansatz is 
\[
\begin{split}
\omega'^3&=( \sqrt{-1} \ddbar{ \phi    })^3 -3 ( \sqrt{-1} \ddbar{ \phi    })^2 \sqrt{-1} \ddbar{ b   }+O(  \frac{1}{ a^3} )  \\
&=\frac{3}{2} \widetilde{vol_E}   \{  
1+ O(\frac{ \log (2H/a)   }{a^3} ) 
\}.
\end{split}
\]
Crudely speaking, the error is now at most $O(\frac{1}{a^{3-\epsilon}})$ for any small $\epsilon>0$.

Now since $\omega'$ deviates from $\sqrt{-1}\ddbar{\phi}$ by only a small smooth perturbation, Hein's condition $SOB(6)$ and the existence of $C^{k,\alpha}$ quasi-atlas still hold for $\omega'$ (compare with section \ref{TheRiccicurvatureofthemetricansatz}). So Hein's theorem gives us

\begin{thm}
There is a smooth bounded potential $\psi$ on $\C^3$ with bounded derivatives to all orders, solving the Monge-Amp\`ere equation
\begin{equation}
(\omega'+\sqrt{-1}\ddbar{\psi})^3=\frac{3}{2} \widetilde{vol_E}.
\end{equation}
In particular the metric $\omega_{CY}=\omega'+\sqrt{-1}\ddbar{\psi}$ is Calabi-Yau, and is uniformly equivalent to $\omega'$.
\end{thm}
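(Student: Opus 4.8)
The plan is to verify that the modified ansatz $\omega'$ satisfies all hypotheses of Hein's existence theorem (\cite{Hein2}, proposition 4.1), and then simply apply it. First I would record that $\omega' = \sqrt{-1}\ddbar(\phi - b\chi)$ differs from $\sqrt{-1}\ddbar\phi$ only by a compactly-supported-derivative perturbation of size controlled by Lemma \ref{correctmainerrors}: since $\sqrt{-1}\ddbar b$ has order $O(a^{-1}H^{-1/2}) + O(a^{-3}\log(2H/a))$ relative to $\sqrt{-1}\ddbar\phi$, the perturbation decays and is uniformly small for $H$ large (and the cutoff $\chi$ only affects a fixed compact region where largeness of $R$ guarantees positivity). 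Hence $\omega'$ is a genuine complete K\"ahler metric, uniformly equivalent to $\sqrt{-1}\ddbar\phi$, with the same asymptotics to leading order. Consequently the $SOB(6)$ property (from the Proposition following Lemma \ref{quasiatlas}, via maximal volume growth and at-worst-quadratic Ricci decay — both stable under the small perturbation) and the existence of a $C^{k,\alpha}$ quasi-atlas (Lemma \ref{quasiatlas}, whose construction via the rescaled coordinates $z_1' = f - f(x)$, $z_i' = H(x)^{-1/4}(z_i - x_i)$ is insensitive to the perturbation) continue to hold for $\omega'$.

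Next I would identify the right-hand side of the Monge-Amp\`ere equation. We want $(\omega' + \sqrt{-1}\ddbar\psi)^3 = \frac{3}{2}\widetilde{vol_E}$, equivalently $(\omega'+\sqrt{-1}\ddbar\psi)^3 = e^{F}\,\omega'^3$ with $e^{F} = \frac{3}{2}\widetilde{vol_E}/\omega'^3$. By the volume computation just before the theorem, $\omega'^3 = \frac{3}{2}\widetilde{vol_E}\{1 + O(a^{-3}\log(2H/a))\}$ for $H \gg 1$, so $F = -\log(1 + O(a^{-3}\log(2H/a))) = O(a^{-3+\epsilon})$ for any small $\epsilon > 0$, with analogous decay of all derivatives (each $H$-derivative gains $O(1/H)$, each $\eta$-derivative gains $O(1/a^2)$, as in section \ref{TheRiccicurvatureofthemetricansatz}). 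Since the distance function $r$ to the origin is uniformly equivalent to $a$, this gives $|F| \le C r^{-\mu}$ on $\{r > 1\}$ with $\mu = 3 - \epsilon > 2$, and $F \in C^{k,\alpha}$ for all $k$. Thus Hein's theorem applies with $\beta = 6 > 2$ and $\mu > 2$, producing $\psi \in C^{4,\bar\alpha}$, and the regularity upgrade clause gives $\psi \in C^{k+2,\bar\alpha}_{\mathrm{loc}}$ with estimates for every $k$, hence $\psi$ smooth; the $L^\infty$ bound from the construction gives $\psi$ bounded, and the interior estimates in the quasi-atlas charts (where $g$ is uniformly Euclidean) upgrade this to bounded derivatives of all orders. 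Finally, $\omega_{CY} = \omega' + \sqrt{-1}\ddbar\psi$ is K\"ahler (positivity is preserved since $\sqrt{-1}\ddbar\psi$ is a small perturbation in the quasi-atlas charts, being controlled by the $C^{k,\alpha}$-estimates for $\psi$) and Ricci-flat by construction, and uniform equivalence to $\omega'$ follows from the two-sided bound on $\sqrt{-1}\ddbar\psi$.

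The main obstacle I anticipate is not any single step but rather the bookkeeping needed to be sure the error term $F$ genuinely decays faster than quadratically \emph{everywhere}, including the bad region $E_\epsilon$ where $H \sim a$. This is exactly the point that the whole ODE-correction machinery of Lemma \ref{correctmainerrors} was built to handle: without it the error would be only $O(a^{-1}H^{-1/2}) = O(a^{-3/2})$ in $E_\epsilon$, which fails $\mu > 2$. So the real content lies upstream, and the proof of the final theorem itself is the short step of checking that the corrected volume form error $O(a^{-3}\log(2H/a))$ survives the cutoff modification and feeds into Hein's hypotheses. A secondary subtlety is ensuring that the interior Schauder estimates promised by Hein's regularity clause, combined with the quasi-atlas, actually yield \emph{global} bounds on all derivatives of $\psi$ (not merely local ones); this follows because the quasi-atlas charts have uniformly Euclidean geometry and uniform size, so the local estimates are uniform in the base point.
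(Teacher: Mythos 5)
Your proposal is correct and follows essentially the same route as the paper: the paper's proof consists precisely of verifying that $\omega'$ inherits the $SOB(6)$ condition and the $C^{k,\alpha}$ quasi-atlas from $\sqrt{-1}\ddbar\phi$, noting that the corrected volume error $O(a^{-3}\log(2H/a))$ gives a forcing term decaying like $r^{-\mu}$ with $\mu>2$ (since $r\sim a$), and then invoking Hein's proposition 4.1. The only minor point worth tightening is that uniform equivalence of $\omega_{CY}$ with $\omega'$ comes from the upper bound on $\sqrt{-1}\ddbar\psi$ combined with the bounded-below volume ratio (rather than from smallness of the perturbation), but this is standard and the paper does not elaborate on it either.
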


Since our setup is symmetric under the action of $SO(3,\R)\times U(1)$, and Hein's method works equivariantly with respect to the group action, we can assume the solution is symmetric.

\subsection{Decay estimates}

We now improve the decay rate of the potential $\psi$. This can be extracted from Hein's work, and the method is standard.
We observe first
\begin{lem}\label{ddbarcontrolofa}
	The function $a$ is comparable to the distance to the origin for $H>1$, and globally satisfies the estimate
	\[
	|da|+a|\sqrt{-1}\ddbar{a}| \leq C.
	\]
\end{lem}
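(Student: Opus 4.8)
The plan is to prove the two claims separately, both by reducing to the explicit formula $a(\eta,H)=\sqrt{\eta+\sqrt{H+1}}$ and estimating its derivatives directly. First, for the comparison $a\sim d(0,\cdot)$ on $\{H>1\}$: Lemma on the distance function already gives $d(0,P)$ uniformly equivalent to $|f|+H^{1/4}$ in the region $\{H>1\}$, so it suffices to observe that $a=\sqrt{\eta+\sqrt{H+1}}$ is itself uniformly equivalent to $|f|+H^{1/4}$ there. Indeed $\eta=|f|^2$ and $\sqrt{H+1}\sim H^{1/2}$ for $H>1$, so $a^2\sim |f|^2+H^{1/2}\sim (|f|+H^{1/4})^2$, giving the claim by taking square roots. (For the global statement one only needs the gradient/Hessian bound below; near the origin $a$ is smooth and bounded below by a positive constant since $\eta+\sqrt{H+1}\geq 1$.)

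For the estimate $|da|+a|\sqrt{-1}\ddbar a|\leq C$, I would compute using the chain rule. Write $a=g(\eta,H)$ with $g=\sqrt{\eta+u}$, $u=\sqrt{H+1}$. Then $a_\eta=\tfrac{1}{2a}$, $a_H=\tfrac{u_H}{2a}=\tfrac{1}{4a\sqrt{H+1}}$, and since $\partial H$, $\partial\eta$ are controlled by the Euclidean metric while $H,\eta\geq 0$, the one-form $da=a_\eta\,d\eta+a_H\,dH$ has Euclidean length bounded by $C(a_\eta|d\eta|+a_H|dH|)$. Here $|d\eta|=|d|f|^2|\leq C|f|\leq Ca$ and $|dH|\leq C H^{1/2}\leq C(H^{1/2}+1)\leq Ca^2$ (using $a^2\geq\sqrt{H+1}\geq H^{1/2}$ roughly; more carefully $a^4\geq H+1$), so $a_\eta|d\eta|\leq \tfrac{Ca}{2a}=C$ and $a_H|dH|\leq \tfrac{CH^{1/2}}{4a\sqrt{H+1}}\leq \tfrac{C}{4a}\leq C$ on $\{H>1\}$ (and trivially bounded for $H\leq 1$). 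For the $\ddbar$ term one expands $\sqrt{-1}\ddbar a=a_\eta\,\sqrt{-1}\ddbar\eta+a_H\,\sqrt{-1}\ddbar H+a_{\eta\eta}\,\sqrt{-1}\partial\eta\wedge\bar\partial\eta+2a_{H\eta}\,\sqrt{-1}\,\mathrm{Re}(\partial H\wedge\bar\partial\eta)+a_{HH}\,\sqrt{-1}\partial H\wedge\bar\partial H$, bounds each scalar second derivative ($a_{\eta\eta}=-\tfrac{1}{4a^3}$, $a_{HH}=O(a^{-1}H^{-3/2})+O(a^{-3}H^{-1})$, $a_{H\eta}=O(a^{-3}H^{-1/2})$), and pairs it with the Euclidean size of the corresponding form ($|\sqrt{-1}\ddbar\eta|\leq C$, $|\sqrt{-1}\ddbar H|\leq C$, $|\partial\eta|^2\leq C\eta\leq Ca^2$, $|\partial H|^2\leq CH\leq Ca^4$, $|\partial H||\partial\eta|\leq Ca^3$). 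Multiplying through by $a$ and using $a^4\gtrsim H$ on $\{H>1\}$, every term is $O(1)$; the region $\{H\leq 1\}$ is handled by smoothness and the uniform lower bound $a\geq 1$ there.

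The only real subtlety — and the step I expect to need the most care — is the term $a_{HH}\,\partial H\wedge\bar\partial H$, because $|\partial H|^2$ is as large as $H$, which is the largest scale around; one must check that the two compensating smallness factors in $a_{HH}$ (one power of $a^{-1}H^{-3/2}$ from differentiating $u=\sqrt{H+1}$ twice, plus the overall $a^{-1}$ we multiply by) genuinely beat it. Concretely $a\,a_{HH}\,|\partial H|^2 = O\!\big(a\cdot a^{-1}H^{-3/2}\cdot H\big)+O\!\big(a\cdot a^{-3}H^{-1}\cdot H\big)=O(H^{-1/2})+O(a^{-2})=O(1)$ on $\{H>1\}$, which closes the estimate. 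The mixed term and the $\eta$-derivative terms are strictly better by the same bookkeeping. Putting the pieces together yields $|da|+a|\sqrt{-1}\ddbar a|\leq C$ globally, completing the lemma.
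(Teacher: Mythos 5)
The paper states this lemma without proof, so your argument has to stand on its own. The comparability of $a$ with $|f|+H^{1/4}$, and hence with $d(0,\cdot)$ on $\{H>1\}$ via the earlier distance lemma, is fine, and the overall term-by-term chain-rule scheme is the right one. The genuine gap is that you estimate \emph{Euclidean} lengths, but the inequality $|da|+a|\sqrt{-1}\ddbar a|\le C$ is false in the Euclidean metric: $|d\eta|_{\omega_E}=2|f|\,|df|_{\omega_E}\sim |f|H^{1/2}$ (your bound $|d\eta|\le C|f|$ silently uses $|df|\le C$, which fails Euclidean-ly since $|df|_{\omega_E}=2H^{1/2}$), so $a_\eta|d\eta|_{\omega_E}\sim |f|H^{1/2}/a$ blows up along $|f|\sim H$. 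The norms must be taken with respect to the ansatz metric $g$ (equivalently $\omega'$ or $\omega_{CY}$) --- that is the metric in which Hein's weighted Moser iteration is run, which is the only reason the lemma is needed. As written, your list of auxiliary bounds mixes the two metrics inconsistently: $|d\eta|\le C|f|$, $|\sqrt{-1}\ddbar\eta|\le C$ and $|\partial\eta|^2\le C\eta$ hold only for $g$ (they use $g\ge c\,\mathrm{Re}(df\otimes d\bar f)$), while $|dH|\le CH^{1/2}$, $|\sqrt{-1}\ddbar H|\le C$ and $|\partial H|^2\le CH$ hold only for the Euclidean metric; in $g$ the fibre directions are contracted by $(H+a)^{-1/2}$, so dual norms of fibre-direction forms gain a factor $(H+a)^{1/4}$ per index.

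The repair is mechanical but necessary. From (\ref{asymptoticmetricleadingorder}) and the Cauchy--Schwarz inequality $\sqrt{-1}\partial H\wedge\bar\partial H\le H\,\sqrt{-1}\ddbar H$ one gets $\omega\ge c\,\sqrt{-1}df\wedge d\bar f$ and $\omega\ge c\,(H+a)^{-1/2}\omega_E$, hence $|df|_g\le C$, $|dH|_g\le C(H+a)^{1/4}H^{1/2}$, $|\sqrt{-1}\ddbar H|_g\le C(H+a)^{1/2}$, $|\partial H|_g^2\le C(H+a)^{1/2}H$, together with $|d\eta|_g\le C|f|$ and $|\sqrt{-1}\ddbar\eta|_g=|\sqrt{-1}df\wedge d\bar f|_g\le C$. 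Every term then closes using two elementary inequalities you never invoke but do need: $a^2\ge\sqrt{H+1}$ (so $a\ge (H+1)^{1/4}$) and $|f|\le H$ (so $a\le CH$ for $H\ge 1$). For instance your ``most delicate'' term becomes $a|a_{HH}|\,|\partial H|_g^2\le C\bigl(a^{-2}(H+1)^{-1}+(H+1)^{-3/2}\bigr)(H+a)^{1/2}H\le C$, and the term $a\,a_H|\sqrt{-1}\ddbar H|_g\le C(H+a)^{1/2}(H+1)^{-1/2}$ is bounded only because $a\le CH$. With these corrections your scheme goes through and the region $H\le 1$ is handled by compactness exactly as you say.
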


\begin{prop}
The potential $\psi$ decays as $O( \frac{1}{ a^ {1-\epsilon} } )$ for any $\epsilon>0$.
\end{prop}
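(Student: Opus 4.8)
The plan is to linearize the Monge-Amp\`ere equation and run a barrier argument built from the function $a$, whose Laplacian we already control. Write $\omega_{CY}=\omega'+\sqrt{-1}\ddbar{\psi}$, so that $\omega_{CY}^3=\frac32\widetilde{vol_E}$ while, by the volume computation for the modified ansatz, $\omega'^3=\frac32\widetilde{vol_E}\{1+O(\log(2H/a)/a^3)\}$. Setting $\omega_t=\omega'+t\sqrt{-1}\ddbar{\psi}$ and integrating $\frac{d}{dt}\log\omega_t^3=\Tr_{\omega_t}\sqrt{-1}\ddbar{\psi}$ from $t=0$ to $1$, the equation becomes $L\psi=f$, where $L=\Tr_{\bar\omega}\sqrt{-1}\ddbar{(\cdot)}$ with $\bar\omega=\int_0^1\omega_t\,dt$ is a linear, second order, uniformly elliptic operator with no zeroth order term (uniform ellipticity because $\omega_{CY}$ is uniformly equivalent to $\omega'$, hence to $\omega$), and $f=\log(\omega_{CY}^3/\omega'^3)$ satisfies $|f|\le C a^{-3}\log(2H/a)$. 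Since the distance to the origin is comparable to $a$, this is a super-quadratically decaying forcing term, $|f|=O(a^{-3+\epsilon})$ for every $\epsilon>0$, and Euclidean-type potential theory on a space of volume growth $6$ predicts a solution decaying like $a^{2-(3-\epsilon)}=a^{-1+\epsilon}$.

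To realize this rate I would construct a supersolution $w=Aa^{-(1-\epsilon)}$. Differentiating, $\sqrt{-1}\ddbar{w}=-(1-\epsilon)A a^{-2+\epsilon}\sqrt{-1}\ddbar{a}+(1-\epsilon)(2-\epsilon)A a^{-3+\epsilon}\sqrt{-1}\partial a\wedge\bar\partial a$. The two inputs are Lemma \ref{ddbarcontrolofa}, which gives $|\partial a|^2\le C$ and $|\sqrt{-1}\ddbar{a}|\le C/a$, and the asymptotic Laplacian \eqref{Laplacianformula} applied to $a$ itself: the dominant $\eta$-contribution is $\eta a_{\eta\eta}+a_\eta=\frac{a^2+\sqrt{H+1}}{4a^3}\ge\frac{1}{4a}$, and a region-by-region inspection of the remaining terms of \eqref{Laplacianformula} shows $\Delta_\omega a\ge c_0/a$ with $c_0>0$ throughout $\{H\gg1\}$. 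Computing $\Delta_\omega(a^{-(1-\epsilon)})$ directly from \eqref{Laplacianformula}, its leading behaviour matches the radial Laplacian on the six-dimensional asymptotic cone $X_0\times\C$, namely $-(1-\epsilon)\big(4-(1-\epsilon)\big)a^{-3+\epsilon}(1+o(1))$, which is strictly negative. Absorbing the logarithm via $\log(2H/a)\le C_\epsilon a^{\epsilon/2}$ and choosing $A$ large, this yields $Lw\le-|f|$ on $\{H\ge H_0\}$.

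With the supersolution in hand, the conclusion follows from the maximum principle. Hein's construction produces a decaying potential, so $\psi\to0$ at infinity; since $SOB(6)$ is non-parabolic ($\beta=6>2$) this is the relevant behaviour. On $\{H\ge H_0\}$ the function $\psi-Aw$ satisfies $L(\psi-Aw)=f-ALw\ge f+A|f|\ge0$, is $\le0$ on the inner boundary $\{H=H_0\}$ once $A$ is chosen with $Aw\ge\|\psi\|_{C^0(\{H=H_0\})}$, and tends to $0$ at infinity; the maximum principle gives $\psi\le Aw$, and symmetrically $\psi\ge-Aw$. Hence $|\psi|\le A a^{-(1-\epsilon)}=O(a^{-(1-\epsilon)})$, which is the claimed decay.

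The main obstacle is passing the supersolution inequality from $\Delta_\omega$ to the \emph{genuine} linearized operator $L$, whose coefficients come from the unknown Calabi-Yau metric and are controlled only up to uniform equivalence. Because $\sqrt{-1}\ddbar{a}$ is not sign-definite, $\Tr_{\bar\omega}\sqrt{-1}\ddbar{a}$ is not a priori bounded below by $\Delta_\omega a$ under mere uniform equivalence; the clean fix is to upgrade Lemma \ref{ddbarcontrolofa} to a uniform Hessian lower bound $\sqrt{-1}\ddbar{a}\ge\frac{c_1}{a}\omega$, so that the bound survives tracing against every intermediate metric $\bar\omega$, with the constants satisfying $c_1\,\Tr_{\bar\omega}\omega>(2-\epsilon)\,|\partial a|^2_{\bar\omega}$ (the analogue of $4>\delta+1$ in the cone model). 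Alternatively, and as the author indicates is standard, one invokes Hein's weighted Sobolev inequalities and the attendant De Giorgi--Nash--Moser iteration on the $SOB(6)$ manifold with the $C^{k,\alpha}$ quasi-atlas of Lemma \ref{quasiatlas}, which yields $|u|=O(r^{2-\mu})$ for $Lu=f$ with $|f|=O(r^{-\mu})$, $2<\mu<\beta=6$, for \emph{any} uniformly elliptic $L$, thereby bypassing pointwise control of the solution-dependent operator entirely. The secondary delicate point is the transition region near $\partial E_\epsilon$, where the Laplacian genuinely depends on both $H$ and $\eta$ so the ODE reduction of the previous subsection fails; there the borderline matching of the forcing decay $a^{-3+\epsilon}$ against the target $a^{-1+\epsilon}$ must be checked with the precise constants, which the robust lower bound $\eta a_{\eta\eta}+a_\eta\ge\frac{1}{4a}$ delivers.
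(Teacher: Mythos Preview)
Your barrier/maximum-principle approach is genuinely different from the paper's, which simply invokes Hein's weighted Moser iteration (Proposition 3.6(ii) of his thesis together with the non-parabolic remark in his Section 4.5); the only analytic input the paper needs is Lemma \ref{ddbarcontrolofa}, and the conclusion $|\psi|=O(a^{2-\mu+\epsilon})$ for a forcing of order $O(a^{-\mu})$, $2<\mu<6$, follows as a black box. Your route is more hands-on and in principle gives the same rate, but the argument as written has two gaps, one of which you yourself flag.

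The first and main gap is exactly where you say: you need $\Tr_{\bar\omega}\sqrt{-1}\ddbar a\ge c/a$ for the \emph{solution-dependent} metric $\bar\omega$, and for this you propose upgrading Lemma \ref{ddbarcontrolofa} to a pointwise Hessian lower bound $\sqrt{-1}\ddbar a\ge \frac{c_1}{a}\omega$. This is not established anywhere in the paper (the lemma only gives $|\sqrt{-1}\ddbar a|\le C/a$), and it is a nontrivial claim: $a$ depends on $H$ only through the weak term $\sqrt{H+1}$, so along the fibre directions the positivity of $\sqrt{-1}\ddbar a$ is delicate and one must actually check that the $\partial H\wedge\bar\partial H$ contribution does not kill the $\ddbar H$ contribution, separately in the regime $a\sim H^{1/4}$ and in $E_\epsilon$ where $a$ is comparable to $H$, including the cross terms between the $df$ and $\partial H$ directions. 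Without this, your supersolution inequality for $L$ does not follow from the one for $\Delta_\omega$.

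The second gap is the step ``Hein's construction produces a decaying potential, so $\psi\to 0$''. Hein's existence theorem as stated only yields a bounded $\psi$ with bounded derivatives; no decay is asserted. Your maximum-principle comparison on $\{H\ge H_0\}$ therefore needs either an a priori $\psi\to 0$ (which is essentially what the proposition is proving), or a Liouville-type argument ruling out bounded $L$-harmonic functions on an $SOB(6)$ manifold so that $\psi$ agrees with the decaying solution up to a constant. That is doable, but it again leans on Moser-type estimates, which brings you back to the paper's method.

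In short: the fallback you mention in your last paragraph---Hein's weighted Sobolev/Moser machinery on $SOB(6)$---\emph{is} the paper's proof. Your barrier argument could likely be completed, but it requires proving the Hessian lower bound for $a$ and handling the boundary-at-infinity behaviour of $\psi$, neither of which is free.
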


\begin{proof}
This follows from Hein's thesis \cite{Hein2}, the proof of proposition 3.6 (ii), and his comments on non-parabolic manifolds in his section 4.5. The main idea is weighted Moser iteration. The lemma above is technically required in his argument. The general conclusion is that if the forcing term decays as $O(a^{-\mu})$, for $2<\mu<6$, then the potential decays as $O(a^{2-\mu+\epsilon})$.
\end{proof}

\begin{rmk}
We think the decay rate estimate is not far from optimal because our correction term to the potential is $b=O(\frac{\log (2H/a)}{a}   )$. It may be of interest to compare this with Joyce's work \cite{Joyce}. As discussed in section \ref{topologicaldiscussions}, our setup does not technically fit into Joyce's definition of QALE manifolds. So we shall just make the crude remark that for Joyce's ALE manifold of real dimension $2n$, the decay rate of the metric is $O(a^{-2n})$, and the decay rate of the potential is $O(a^{2-2n})$, which in our context is $O(a^{-4})$. In any case it is clear that our decay rate is much slower.
\end{rmk}

\begin{prop}\label{decayerrorestimate}
The potential $\psi$ satisfies the decay bound 
\begin{equation}
\norm{ \nabla_{\omega'}^{(k)}\psi}_{L^\infty} \leq \frac{C(k)}{ a^{1-\epsilon}(H+1)^{k/4}  }.
\end{equation}
\end{prop}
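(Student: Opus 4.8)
The plan is to derive the bound from interior elliptic estimates for the Monge--Amp\`ere equation satisfied by $\psi$, carried out on coordinate balls of the \emph{maximal} admissible size around each point rather than on unit balls. The key geometric input is that, by the proof of Lemma~\ref{quasiatlas} (\cf also Section~\ref{TheRiccicurvatureofthemetricansatz}), around any point $x$ the metric $g'$ is uniformly equivalent to the Euclidean metric in suitable holomorphic coordinates, with uniformly bounded $C^{k,\alpha}$ norms for every $k$, on a $g'$-ball of radius $R(x):=c\,(H(x)+1)^{1/4}$ --- this being exactly the scale at which $H$ and $a$ start to vary and the quasi-atlas construction ceases to apply. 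Since $k$ covariant differentiations measured in $g'$ on a ball of radius $R(x)$ cost a factor $R(x)^{-k}=c^{-k}(H(x)+1)^{-k/4}$, running the estimate at this scale produces exactly the gain in the statement; geometrically this reflects that $\psi$ varies on the scale of the fibre diameter $\sim(H+1)^{1/4}$ rather than on the scale $\sim a$ of the distance to the origin. For $H\lesssim 1$ one has $R(x)\sim 1$ and the argument applies unchanged. One may assume $\psi$ is $SO(3,\R)\times U(1)$-invariant, although this is not used.

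I would fix $x$, set $B:=B_{g'}(x,R(x))$, and rescale: the dilated metric $\hat g':=R(x)^{-2}g'$ makes $B$ of unit size, and in the coordinates of Lemma~\ref{quasiatlas} it is uniformly equivalent to $g_{\C^3}$ with uniformly bounded $C^{k,\alpha}$ norms, all constants independent of $x$; the rescaled potential is $\hat\psi:=R(x)^{-2}\psi$, so that $(\hat\omega'+\sqrt{-1}\ddbar\hat\psi)^3=e^{F}\,\hat\omega'^{\,3}$ with $\hat\omega':=R(x)^{-2}\omega'$ and $F=O(\log(2H/a)\,a^{-3})$ from (\ref{volumeerrorterms}) and Lemma~\ref{correctmainerrors}. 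Since $\psi$ has uniformly bounded derivatives and $\omega_{CY}$ is uniformly equivalent to $\omega'$, Evans--Krylov together with a Schauder bootstrap applied to this rescaled complex Monge--Amp\`ere equation show that $\hat\omega_{CY}:=R(x)^{-2}\omega_{CY}$ has uniformly bounded rescaled $C^{k,\alpha}$ geometry. I would then rewrite the equation in the linear form
\[
\sqrt{-1}\ddbar\hat\psi\wedge\bigl(\hat\omega_{CY}^{2}+\hat\omega_{CY}\wedge\hat\omega'+\hat\omega'^{\,2}\bigr)=(e^{F}-1)\,\hat\omega'^{\,3},
\]
that is $\hat{\mathcal{L}}\hat\psi=e^{F}-1$ with $\hat{\mathcal{L}}$ a second order linear uniformly elliptic operator with uniformly bounded $C^{k,\alpha}$ coefficients.

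Next I would feed in the two size inputs. Because $\text{diam}_{g'}(B)\leq R(x)\leq\frac{1}{2}\,d_{g'}(0,x)$ once $c$ is small, Lemma~\ref{ddbarcontrolofa} shows that $a$ is comparable to $a(x)$ on $B$, so the preceding proposition gives $\|\hat\psi\|_{C^0}=R(x)^{-2}\|\psi\|_{C^0(B)}\leq C\,R(x)^{-2}a(x)^{-(1-\epsilon)}$. And using $F=O(\log(2H/a)\,a^{-3})$ with the differentiation rules ($\partial_H$ gains $O(1/H)$, $\partial_\eta$ gains $O(1/a^2)$, $|\bar{z_i}|\leq H^{1/2}$, $|\bar{f}|\leq a$) and the asymptotic metric (\ref{asymptoticmetricleadingorder}), one checks that every rescaled covariant derivative of $F$ gains a factor $R(x)(H+1)^{-1/4}\sim c$, so that $\|e^{F}-1\|_{C^{k,\alpha}}$ in the rescaled chart is again $O(\log(2H/a)\,a^{-3})$. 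Interior Schauder for $\hat{\mathcal L}$ then gives $\|\hat\psi\|_{C^{k+2,\alpha}}\leq C\bigl(\|\hat\psi\|_{C^0}+\|e^{F}-1\|_{C^{k,\alpha}}\bigr)\leq C\,R(x)^{-2}a(x)^{-(1-\epsilon)}+C\log(2H/a)\,a^{-3}$. Finally, $\nabla_{\omega'}^{(k)}\psi$ at $x$ equals $R(x)^{2-k}$ times the rescaled $k$-th covariant derivative of $\hat\psi$; using $R(x)^{2}=c^{2}(H+1)^{1/2}\leq C\,a(x)^{2}$ (as $H\lesssim a^{4}$) to absorb the forcing contribution, this yields
\[
\bigl|\nabla_{\omega'}^{(k)}\psi\bigr|(x)\ \leq\ C(k)\,R(x)^{-k}\,a(x)^{-(1-\epsilon)}\ =\ \frac{C(k)}{a(x)^{1-\epsilon}\,(H(x)+1)^{k/4}},
\]
which is the claim (the $H\lesssim 1$ case being immediate).

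The hard part is the quantitative geometric input invoked at the outset: that $g'$ --- and, after the Evans--Krylov/Schauder bootstrap, $\omega_{CY}$ --- has uniformly bounded geometry all the way up to the macroscopic scale $(H+1)^{1/4}$, with all rescaled metric derivatives controlled uniformly in $x$. This is precisely what permits $k$ derivatives to be integrated against a ball of radius $(H+1)^{1/4}$ instead of a unit ball, and it is the source of the unexpected gain $(H+1)^{-k/4}$; it is an adiabatic-collapse feature, not visible from Euclidean intuition. The subsidiary verification --- that the rescaled forcing $R(x)^{2}(e^{F}-1)$ remains subdominant after such a large rescaling --- is routine but must be done carefully in the worst region $E_\epsilon$, where $H\sim a$, the rescaling factor $(H+1)^{1/2}\sim a^{1/2}$ is largest, and the corrected error decays only like $\log(2H/a)\,a^{-3}$.
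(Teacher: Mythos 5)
Your proposal is correct and follows essentially the same route as the paper: work in the quasi-atlas coordinates of Lemma \ref{quasiatlas} on balls of radius $\sim (H+1)^{1/4}$, rescale the metric and potential by $(H+1)^{-1/2}$, apply interior elliptic estimates to the rescaled Monge--Amp\`ere equation using the $C^0$ decay of $\psi$ and the $O(\log(2H/a)\,a^{-3})$ forcing, and undo the rescaling to collect the factor $(H+1)^{-k/4}$ per derivative. The paper's proof is a compressed version of exactly this argument, so no further comparison is needed.
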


\begin{proof}
By the smooth estimates in Hein's result, it is enough to consider $H>>1$ and prove the decay property.
Recall in our proof of the existence of the $C^{k,\alpha}$ quasi-atlas, we introduced a set of coordinates $z_1',z_2',z_3'$ around the point $x$ (see lemma \ref{quasiatlas}), which displays that the metric is smoothly equivalent to the standard Euclidean metric on a ball of size $cH^{1/4}(x) $, for some small fixed $c$.
Now we scale both the coordinates and the metric, writing $z_j''=\frac{z_j'}{ cH(x)^{1/4} }$, so $\frac{1}{H(x)^{1/2}} \omega'$ and $\frac{1}{H(x)^{1/2}}\omega_{CY}$ are uniformly equivalent to the standard Euclidean metric in the unit coordinate ball. The Calabi-Yau condition reads
\[
(\frac{1}{H(x)^{1/2}} \omega'+ \frac{1}{H(x)^{1/2}} \sqrt{-1}\ddbar\psi   )^3-(\frac{1}{H(x)^{1/2}} \omega')^3 =\frac{1}{H(x)^{3/2}} (\omega_{CY}^3-\omega'^3).
\]
The $k-$th derivatives of the forcing function $\frac{\omega'^3-\omega_{CY}^3}{\omega_{CY}^3}$ is of order $O(\frac{ \log (2H/a   )}{ a^3 })=O( \frac{1}{a^{1-\epsilon} H^{1/2 } } )$, for any $k\geq 0$. The potential $\frac{\psi}{H(x)^{1/2}}=O( \frac{1}{a^{1-\epsilon} H^{1/2 } } )$. By elliptic bootstrap, we obtain that the $k-$th derivative of $\frac{\psi}{H(x)^{1/2}}$ is estimated by $O(\frac{1}{a^{1-\epsilon} H^{1/2 } } )$. Reverting back to the $z_i'$ coordinates, this implies the claim.
\end{proof}

\begin{rmk}
In the region where $|f|$ is comparable to $ H^{1/4}$, the $k=2$ case of this estimate implies $\omega_{CY}-\omega=O(\frac{1}{ a^{3-\epsilon}}  )$, where we recall $\omega$ is the ansatz (\ref{smoothingexample2scaled}). 
The decay estimates are much slower in $E_\epsilon$, which can be expected from the singularity of the tangent cone at infinity.
\end{rmk}

As a consequence of the decay estimate \ref{decayerrorestimate} on the metric, the leading order asymptote of $\omega$ agrees with that of $\omega_{CY}$, hence
\begin{cor}
The asymptotic properties of $\omega$, including maximal volume growth and the tangent cone at infinity, are also true for $\omega_{CY}$.
\end{cor}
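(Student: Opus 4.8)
The plan is to show that $\omega_{CY}$ and the ansatz $\omega$ of (\ref{smoothingexample2scaled}) share the same leading-order asymptotics, so that every argument of Section 5 applies verbatim. Write $\omega_{CY}=\omega'+\sqrt{-1}\ddbar\psi$, where $\omega'=\sqrt{-1}\ddbar(\phi-b\chi)$. Since $b\chi=O(\log(2H/a)/a)$ with the derivative control of Lemma \ref{correctmainerrors}, for $H\gg1$ one has $\omega'-\omega=-\sqrt{-1}\ddbar(b\chi)=O\big((aH^{1/2})^{-1}\big)\,\omega$, and by the $k=2$ case of Proposition \ref{decayerrorestimate} also $\sqrt{-1}\ddbar\psi=O(a^{-3+\epsilon})\,\omega$. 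Hence $\omega_{CY}-\omega\to0$ relative to $\omega$ at infinity; in particular $\omega_{CY}$ is uniformly equivalent to $\omega$ (this is already recorded in the existence theorem through uniform equivalence with $\omega'$ and the small-perturbation remark). I denote by $g_{CY}$ the Riemannian metric tensor of $\omega_{CY}$.

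For maximal volume growth: the uniform equivalence $g_{CY}\asymp g$ makes the distance functions comparable up to fixed constants, so $B_{g_{CY}}(0,r)$ is sandwiched between $B_g(0,cr)$ and $B_g(0,Cr)$; since $\omega_{CY}^3$ and $\omega^3$ are comparable volume forms, $Vol_{g_{CY}}\big(B_{g_{CY}}(0,r)\big)$ is comparable to $Vol_g\big(B_g(0,r)\big)\sim r^6$ by the volume growth lemma for $\omega$. This yields $Cr^6\le Vol\big(B_{g_{CY}}(0,r)\big)\le C'r^6$ for $r\gg1$.

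For the tangent cone at infinity one repeats the proof given for $\omega$. Fix a small $\epsilon>0$, let $r\to\infty$, and rescale $g_{CY}$ by $r^{-2}$. Lemma \ref{squash} carries over to $g_{CY}$: the cone $E_\epsilon$ is foliated by the fibres $X_f$, on which $g_{CY}$ is still uniformly equivalent to the (rescaled) Stenzel metric for $|f|\gg1$ because $\omega_{CY}-\omega$ decays, so $E_\epsilon\cap B_{r^{-2}g_{CY}}(0,D)$ has diameter $O(\epsilon)$ for large $r$ and contributes nothing to the Gromov--Hausdorff limit. On the complementary region $(\C^3\setminus E_\epsilon)\cap\{H\ge C\}$, a point lying in a ball of fixed radius $D$ after rescaling satisfies $a\sim r$, hence $\omega_{CY}-\omega=O(a^{-3+\epsilon})\,\omega=O(r^{-2+\epsilon})\cdot r^{-2}g$ measured in the rescaled metric; therefore $(\C^3\setminus E_\epsilon)\cap B_{r^{-2}g_{CY}}(0,D)$ is Gromov--Hausdorff $o(1)$-close to $(\C^3\setminus E_\epsilon)\cap B_{r^{-2}g}(0,D)$, which we already know converges to $B_{g_{product}}(0,D)\subset X_0\times\C$. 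Letting $r\to\infty$ identifies the tangent cone of $\omega_{CY}$ with $X_0\times\C$ endowed with the product metric.

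The one delicate point is the behaviour inside $E_\epsilon$, where Proposition \ref{decayerrorestimate} gives only weak decay of $\psi$. But $E_\epsilon$ is precisely the region already known to be metrically squashed for $\omega$, and the uniform equivalence $g_{CY}\asymp g$ suffices to transfer the squashing estimate to $g_{CY}$: no quantitative decay is needed inside $E_\epsilon$, only on its complement, where Proposition \ref{decayerrorestimate} is strong. With both ingredients in hand the argument closes, and together with the fact that $\omega_{CY}$ solves $(\omega_{CY})^3=\tfrac{3}{2}\widetilde{vol_E}$, which is a constant multiple of $i^{n^2}\Omega\wedge\overline{\Omega}$ for the standard $\Omega$, this proves the corollary.
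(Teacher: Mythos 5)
Your proof is correct and follows essentially the same route as the paper, which deduces the corollary directly from Proposition \ref{decayerrorestimate}: the perturbation $\omega_{CY}-\omega$ decays relative to $\omega$, so the volume-growth and tangent-cone arguments of Section 5 transfer verbatim. One small quantitative caveat: the rate $O(a^{-3+\epsilon})$ for $\sqrt{-1}\ddbar\psi$ holds only where $|f|\lesssim H^{1/4}$, while elsewhere on $\C^3\setminus E_\epsilon$ the estimate $O(a^{-1+\epsilon}H^{-1/2})$ gives a slower --- but still $o(1)$ --- relative decay, which is all your Gromov--Hausdorff argument actually needs.
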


\begin{rmk}
This CY metric is a counterexample to a previous conjecture, which says any complete CY metric on $\C^n$ with maximal growth is flat (\cf \cite{Tian}, Remark 5.3).
\end{rmk}

\subsection{Riemannian submersion property}

We consider the trace $\Tr_{\omega_{CY}}\omega_0$, where $\omega_0=\frac{1}{2} \sqrt{-1} df d\bar{f}$ is the standard Euclidean metric on $\C$. We notice that for $H>>1$, the asymptotic behaviour of $\omega_{CY}$ implies $f$ is almost a Riemannian submersion:
\[
\Tr_{\omega_{CY}}\omega_0 \sim 1.
\]
For the region $H<<1$, this is no longer true due to quantising effects. But quite remarkably we have a one sided global estimate:

\begin{prop}
$\Tr_{\omega_{CY}} \omega_0 \leq 1.$
\end{prop}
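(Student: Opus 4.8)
The plan is to exploit the maximum principle, using the fact that $\Delta_{\omega_{CY}} f = 0$ (since $f$ is holomorphic) together with the Calabi–Yau condition. Set $u = \Tr_{\omega_{CY}} \omega_0 = |\partial f|^2_{\omega_{CY}}$, the pointwise norm-squared of the holomorphic one-form $df$ measured in the Calabi–Yau metric. The key input is a Bochner-type inequality: for a holomorphic function $f$ on a K\"ahler manifold with $\mathrm{Ric}_{\omega_{CY}} = 0$, the function $u = |\partial f|^2$ satisfies
\[
\Delta_{\omega_{CY}} u = |\nabla \partial f|^2_{\omega_{CY}} \geq 0,
\]
i.e. $u$ is subharmonic (the Ricci term in the Bochner formula drops out by Ricci-flatness, and the Hessian term is a sum of squares because $f$ is holomorphic). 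So $u$ is a bounded subharmonic function on the complete manifold $(\C^3, \omega_{CY})$, and I want to conclude $u \leq 1$.

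Next I would check the asymptotic boundary value. From the decay estimate (Proposition \ref{decayerrorestimate}), $\omega_{CY} - \omega = O(a^{-(3-\epsilon)})$ relative to $\omega$ in the good region and with controlled derivatives, so $u = \Tr_{\omega_{CY}}\omega_0 \to \Tr_{\omega}\omega_0$ asymptotically; and the leading asymptote (\ref{asymptoticmetricleadingorder}) has base term exactly $\frac{1}{2}df\wedge d\bar f$, giving $\Tr_\omega \omega_0 \to 1$ as $H \to \infty$ in the region $\C^3 \setminus E_\epsilon$. In the collapsing region $E_\epsilon$ the fibre directions are highly squashed (Lemma \ref{squash}), which only makes $\Tr_{\omega_{CY}}\omega_0$ smaller, not larger, so $\limsup_{x\to\infty} u(x) \leq 1$. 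Now the conclusion follows from a Liouville/maximum principle argument: since $(\C^3,\omega_{CY})$ satisfies $SOB(6)$, it is non-parabolic with maximal volume growth, and a bounded subharmonic function on such a manifold attains its supremum "at infinity" in the sense that $\sup_M u = \limsup_{x\to\infty} u(x)$ — equivalently, one applies the maximum principle on exhausting balls $B_g(0,r)$, noting that on $\partial B_g(0,r)$ we have $u \leq 1 + o(1)$, so $u \leq 1 + o(1)$ throughout $B_g(0,r)$, and let $r\to\infty$.

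Alternatively, and perhaps more cleanly, I can avoid any Liouville input: the function $1 - u$ is superharmonic, bounded, and $\geq -o(1)$ near infinity; I claim $1 - u \geq 0$ everywhere. Suppose not; then $\inf_M(1-u) = -\delta < 0$ is achieved (it cannot escape to infinity since $1 - u \to$ something $\geq 0$ there, and $1-u$ is continuous hence achieves its inf on each compact exhaustion, with the infima bounded below along the exhaustion forcing an interior minimum by the asymptotics). At an interior minimum of the superharmonic function $1-u$ the strong maximum principle forces $1 - u$ to be constant, contradicting the boundary behaviour.

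The main obstacle I anticipate is making the "$\limsup$ at infinity is $\leq 1$" step fully rigorous and uniform over \emph{all} directions to infinity, including inside the bad cone $E_\epsilon$ where the ansatz asymptotics (\ref{asymptoticmetricleadingorder}) degrade and Proposition \ref{decayerrorestimate} gives only weak decay. One must argue that even there $\Tr_{\omega_{CY}}\omega_0 \leq 1 + o(1)$: this should follow because $\omega_{CY}$ is uniformly equivalent to $\omega'$, which is uniformly equivalent to $\omega$, and on $\omega$ the base form pulls back with coefficient tending to $\tfrac12$ against the dominant $\tfrac12 df\wedge d\bar f$ term while the squashing of fibre directions can only decrease the trace; quantifying this requires a careful look at the metric coefficients in the $E_\epsilon$ region rather than a soft argument. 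If that uniform upper bound at infinity cannot be pushed all the way to $E_\epsilon$, one falls back on the fact that a bounded subharmonic function on a manifold with maximal volume growth and $SOB(6)$ satisfies $\sup_M u = \limsup u$ computed over \emph{any} sequence escaping to infinity through the non-collapsed region — which suffices since that region is "most" of the manifold.
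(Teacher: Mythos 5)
Your proposal is correct and takes essentially the same route as the paper: the paper applies the Chern--Lu formula to $f$ (noting it reduces to the Bochner formula here, exactly as you use) to get subharmonicity of $\log\Tr_{\omega_{CY}}\omega_0$ from Ricci-flatness of $\omega_{CY}$ and flatness of $\omega_0$, then concludes by the maximum principle since the trace tends to $1$ at infinity. Working with $\Tr$ rather than $\log\Tr$ is an immaterial variation, and your extra care about the limit inside $E_\epsilon$ is a point the paper glosses over but does not change the argument.
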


\begin{proof}
We use the Chern-Lu formula for the fibration $f$, which in this case essentially reduces to the Bochner formula. Notice $\omega_{CY}$ is Ricci flat and $\omega_0$ is flat, so \[
\Lap_{\omega_{CY}} \log \Tr_{\omega_{CY}} \omega_0 \geq 0.
\]
The subharmonic function $\log \Tr_{\omega_{CY}} \omega_0$ tends to limiting value zero at infinity, hence the maximum principle gives the claim.
\end{proof}

\begin{rmk}
It may be noticed that this argument only depends on the Calabi-Yau condition and the leading asymptote.
\end{rmk}

\section{Remarks on moduli questions}

\subsection{Moduli questions and possible generalisations}

Our Calabi-Yau metric $\omega_{CY}$ is a nontrivial Ricci flat metric on $\C^3$. This is interesting in relation to the following fundamental question: on a given noncompact complex manifold with a global holomorphic volume form, can we classify the complete Calabi-Yau metrics? What if we impose maximal growth condition? How about the case of $\C^n$?

Our example indicates that the question involves considerable subtlety. The author thinks his construction can be extended to higher dimensional standard Lefschetz fibrations on $\C^n$ with $n>3$, using a similar strategy, but we do not pursue it here due to the computational difficulty.

In a more general vein, one may ask if we can build other examples using more complicated fibrations on $\C^n$, starting from some ansatz which desingularises a semiflat metric; optimistically this gives an abundant supply of nontrivial Ricci flat metrics on $\C^n$. Gabor Szekelyhidi informs the author after the completion of this paper that he is independently working on this more general problem.
The difficulty for this question is more severe, because we lose both the explicit nature of the Stenzel metric and the cohomogeneity-two property.

This question is also very relevant for understanding what happens near the (isolated) critical points of a very collapsed Calabi-Yau manifold fibred over some Riemann surface, assuming the singular fibre is a normal Calabi-Yau variety. The expectation is that there should be a complete Calabi-Yau metric on a model space fibred over $\C$, possibly biholomorphic to $\C^n$, whose asymptotic behaviour is a semiflat metric, such that after appropriate scaling it describes the collapsing metric near the critical points on the total space of the fibration. (The normality condition is imposed for the fibrewise Calabi-Yau metric to make sense; if we drop this condition, a prototype example is the Ooguri-Vafa metric, which is an incomplete metric modelling the collapsing elliptically fibred K3 surface near a singular fibre (\cf Gross and Wilson \cite{GrossWilson}), and whose local behaviour near the node is modelled on the Taub-NUT metric which does not have maximal growth.) A description of the tangent cone at infinity is likely to involve stability issues, as understood for $A_k$-type singularities by Hein and Naber.

On the negative side, any meaningful classification program must deal with singular tangent cones,
which can be a major source of difficulty.

\subsection{Deformation and uniqueness questions}

We can also ask the weaker question of classifying Calabi-Yau metrics arising as deformations of a given one, which I believe is more tractable. This depends on the class of deformations, which in turn depends on the function spaces in which one sets up the Monge-Amp\`ere equation. Hein's viscosity solution method surprisingly does not give any suggestion on the appropriate function space, nor does it make any uniqueness assertions. Let us formulate a number of more concrete questions:

\begin{Question}
Can we classify all Calabi-Yau metrics with the given holomorphic volume form which are uniformly equivalent to our example?
\end{Question}

Instead of giving a complete answer, we remark that  $SL(3,
\C)$ gives automorphisms of $\C^3$ as a complex manifold. But the uniform equivalence forces a compatibility condition with the fibration structure, so reduces this symmetry to $SO(3,\C) \times \langle e^{2\pi i/3}\rangle$. Clearly $SO(3,\R)\times \langle e^{2\pi i/3}\rangle$ is a subgroup preserving also the metric structure. So the moduli space contains at least the homogeneous space $SO(3,\C) / SO(3,\R) \simeq SL(2,\C)/SU(2) $. These examples are different as Calabi-Yau metrics on the fixed complex manifold $\C^3$, but they are isometric as Riemannian manifolds by a linear change of coordinates.

A more subtle deformation of our metric can be obtained by a simultaneous scaling of our metric and the coordinates, such that the volume form is preserved. Viewed alternatively, if we start from the ansatz
\[
\phi=\frac{1}{2\lambda^{2/3}}  \eta+  \lambda^{1/3} \sqrt{H+ ( \eta+ \lambda \sqrt{H+\lambda^{2/3}}   )^{1/2}  },
\]
where $\lambda>0$, and solve the Monge-Amp\`ere equation as before, we would get a new Calabi-Yau metric with the same volume form. This deformation can be anticipated from our discussions in section \ref{collapsingproblem}.

\begin{Question}
Given the leading order asymptote for the K\"ahler form, then is our Calabi-Yau metric unique?
\end{Question}

The uniqueness is clear once we assume sufficient decay of the perturbation term, and the question is about the optimal decay condition we need.

\begin{Question}
What are the formal deformations of our example?
\end{Question}

This involves linearising the Calabi-Yau equation. We obtain formally an equation on a closed $(1,1)$ form $\beta$:
\begin{equation}
\omega_{CY}^2 \beta=0.
\end{equation}
This says $\beta$ is primitive. On the space $\C^3$, closedness and exactness are the same. It is clear from the Hodge identities that $\beta$ is harmonic. For example, the nontrivial deformations discussed above provide particular solutions. The answer in general depends on the function space. In particular, we may ask what are the bounded solutions, and how they relate to moduli questions. 

\begin{Question}
(Rigidity) If $\omega_{CY}'$ is a Calabi-Yau metric on a large ball $\{|z|\leq R \}$ with the same volume form as $\omega_{CY}$, and near the boundary $ (1-\epsilon) \omega_{CY} \leq   \omega_{CY}' \leq (1+\epsilon) \omega_{CY}$ for sufficiently small $\epsilon$, then can we deduce effective estimates of the shape
$
(1-C\epsilon) \omega_{CY} \leq   \omega_{CY}' \leq (1+C\epsilon) \omega_{CY} 
$ on the whole ball, where $C$ is independent of $R$?
\end{Question}


\end{document}